\newtheorem{question}{Question}
\newtheorem{problem}{Problem}
\newtheorem{definition}{Definition}
\newtheorem{theorem}{Theorem}
\newtheorem{conjecture}[theorem]{Conjecture}
\newtheorem{corollary}[theorem]{Corollary}
\newtheorem{lemma}[theorem]{Lemma}
\newtheorem{proposition}[theorem]{Proposition}
\newtheorem{claim}{Claim}
\newtheorem{target}{Target}
\newcommand{\eps}{\varepsilon}
\newcommand{\fin}{\textrm{\upshape{fin}}}
\newcommand{\ex}{\textrm{\upshape{ex}}}
\newcommand{\By}[2]{\overset{\mbox{\tiny{#1}}}{#2}}
\newcommand{\ByRef}[2]{   \By{\eqref{#1}}{#2} }
\newcommand{\eqByRef}[1]{ \ByRef{#1}{=} }
\newcommand{\gByRef}[1]{  \ByRef{#1}{>} }
\newcommand{\leByRef}[1]{ \ByRef{#1}{\le} }
\newcommand{\geByRef}[1]{ \ByRef{#1}{\ge} }
\title{On the algebraic and topological structure of the set of Tur\'an densities}
\author{Codru\unichar{355} Grosu\thanks{This research was supported by the Deutsche Forschungsgemeinschaft within the research training group `Methods for Discrete Structures' (GRK 1408).}}
\affil{\small{grosu.codrut@gmail.com, Freie Universit\"at Berlin, Germany}}
\date{}
\begin{document}

\maketitle

\begin{abstract}The present paper is concerned with the various algebraic structures supported by the set of Tur\'an densities.

We prove that the set of Tur\'an densities of finite families of $r$-graphs is a non-trivial commutative semigroup, and as a consequence we construct explicit irrational densities for any $r \geq 3$. The proof relies on a technique recently developed by Pikhurko.

We also show that the set of all Tur\'an densities forms a graded ring, and from this we obtain a short proof of a theorem of Peng on jumps of hypergraphs.

Finally, we prove that the set of Tur\'an densities of families of $r$-graphs has positive Lebesgue measure if and only if it contains an open interval. This is a simple consequence of Steinhaus's theorem.
\end{abstract}

\tableofcontents

\section{\normalsize Introduction}

Let $r \geq 1$ and $\mathcal{F}$ be a (possibly infinite) family of $r$-graphs. 
For any $n \geq 1$, the Tur\'an function $\ex(n, \mathcal{F})$ is defined as the maximum possible number of edges of an $\mathcal{F}$-free $r$-graph on $n$ vertices (if no such $r$-graph exists, $\ex(n, \mathcal{F})=0$ by definition). The study of the Tur\'an function goes back to the foundational paper of Tur\'an \cite{Tur} from 1941, which essentially created the field of extremal graph theory. Most of the work concerning the Tur\'an function was devoted to the case of graphs ($r=2$), and not much is known about larger values of $r$ (see \cite{keevash11} for a survey). It is nevertheless the latter case that concerns us in this paper.

As was observed by Katona, Nemetz and Simonovits \cite{katona64}, one can define the Tur\'an density of $\mathcal{F}$ as
\begin{equation*}
\label{eq:turan}
\pi(\mathcal{F}) = \lim_{n \rightarrow \infty}\frac{\ex(n, \mathcal{F})}{\binom{n}{r}},
\end{equation*}
and this limit always exists. Let $\Pi_{\infty}^{(r)}$ consist of all possible Tur\'an densities of $r$-graph families and $\Pi_{\fin}^{(r)}$ be the set $\{\pi(\mathcal{F}) : \mathcal{F} \textrm{ is a finite family of $r$-graphs}\,\}$. Clearly $\Pi_{\fin}^{(r)} \subseteq \Pi_{\infty}^{(r)}$.

The Erd\H os-Stone-Simonovits theorem (\cite{ErdosStone1946}, \cite{Erdos66}) completely determines the set $\Pi_{\infty}^{(2)}$. In fact,
\begin{equation}
\label{eq:case2}
\Pi_{\fin}^{(2)} = \Pi_{\infty}^{(2)} = \{1\} \cup \{1 - \frac{1}{k} : k \geq 1\}.
\end{equation}

For $r \geq 3$, little is known about $\Pi_{\infty}^{(r)}$ or $\Pi_{\fin}^{(r)}$. Erd\H os \cite{ErdosC83} offered \$1000 for the complete determination of $\Pi_{\infty}^{(r)}$ for all $r$. Nevertheless, many basic questions are still open, perhaps the most famous one being Tur\'an's conjecture from 1941 that $\pi(K_4^{3}) = 5/9$ (here $K_t^{r}$ denotes the complete $r$-graph on $t$ vertices). Even more, to date no value $\pi(K_t^{r})$ for $3 \leq r < t$ has been determined.

One can roughly divide the present knowledge about Tur\'an densities into topological and algebraic facts. In order to obtain a better picture we shall try to summarize in the sequel some of the most important theorems.

One of the oldest results about $\Pi_{\infty}^{(r)}$ is due to Erd\H os \cite{Erdos64}, who proved that $\Pi_{\infty}^{(r)} \cap (0, r!/r^r) = \emptyset$. Erd\H os \cite{ErdosC83} went on to conjecture that for every $\alpha \in (0, 1)$ there exists some $\eps > 0$ with $\Pi_{\infty}^{(r)} \cap (\alpha, \alpha+\eps) = \emptyset$ (such an $\alpha$ is called \textit{a jump for $r$-graphs}). Clearly this is the case for $r=2$, albeit Frankl and R\"odl \cite{FranklRodl84} famously disproved the conjecture by showing that $1 - 1/\ell^{r-1}$ is a non-jump for $r$-graphs, for every $\ell > 2r, r \geq 3$. Erd\H os (\cite{Erdos71}\footnote{Incidentally this was the first paper in the first number of the journal \textit{Discrete Mathematics}.}, \cite{ErdosC83}) further conjectured that $r!/r^r$ is always a jump for $r$-graphs, and offered \$500 for a solution. This conjecture (called the \textit{jumping constant conjecture}) is still open. Many examples of non-jumps were constructed using the method of Frankl and R\"odl (\cite{peng07a}, \cite{peng07b}, \cite{peng08a}, \cite{peng08b}), the smallest of which is $\frac{5}{2}\frac{r!}{r^r}$ \cite{Frankl07}.

It was shown by Brown and Simonovits \cite{Brown84} that $\Pi_{\infty}^{(r)} \subseteq \overline{\Pi}_{\fin}^{(r)}$. Recently Pikhurko proved that equality holds here.

\begin{theorem}[Pikhurko, \cite{pikhurko12}]
The set $\Pi_{\infty}^{(r)}$ is closed in $[0,1]$.
\end{theorem}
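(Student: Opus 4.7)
By Brown--Simonovits we already know $\Pi_\infty^{(r)} \subseteq \overline{\Pi_{\fin}^{(r)}}$, so the theorem reduces to showing every limit of Turán densities of \emph{finite} families is itself a Turán density. Given a sequence of finite families $\mathcal{F}_i$ with $\alpha_i := \pi(\mathcal{F}_i) \to \alpha$, my plan is to construct a single (in general infinite) family $\mathcal{F}$ with $\pi(\mathcal{F}) = \alpha$ via a ``limit object'' built from near-extremal constructions for the $\mathcal{F}_i$.

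For each $i$, use the definition of $\pi(\mathcal{F}_i)$ to pick an $\mathcal{F}_i$-free $r$-graph $H_i$ on $N_i$ vertices with edge density at least $\alpha_i - 1/i$, letting $N_i \to \infty$ as rapidly as needed. Since there are only countably many isomorphism classes of finite $r$-graphs, a diagonal extraction yields a subsequence along which the induced subgraph densities $p(F, H_i)$ converge to limits $p_\infty(F) \in [0,1]$ for every $F$; in particular the edge density of $H_i$ tends to $\alpha$. Define
\[
\mathcal{F} \;=\; \{F : p_\infty(F) = 0\}.
\]

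The lower bound $\pi(\mathcal{F}) \geq \alpha$ is the easier half. Fix $n$ and sample $n$ vertices uniformly at random from $H_i$ with $i = i(n)$ sufficiently large. The induced $r$-graph has expected edge density essentially that of $H_i$ (hence within $o(1)$ of $\alpha$), and its expected number of copies of any fixed $F \in \mathcal{F}$ with $|V(F)| \leq n$ is $p(F,H_i)\binom{n}{|V(F)|} \to 0$ as $i \to \infty$. Only finitely many such $F$ have $|V(F)| \leq n$, so choosing $i$ large and then deleting an $o(1)$-fraction of edges to destroy any stray forbidden copies yields $\mathcal{F}$-free $r$-graphs of density tending to $\alpha$.

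The matching upper bound $\pi(\mathcal{F}) \leq \alpha$ is where I expect the main obstacle to lie. Mere support containment---the observation that any $\mathcal{F}$-free graph $G$ has its subgraph densities supported on $\{F : p_\infty(F) > 0\}$---does not by itself bound the edge density of $G$ by $\alpha$, since distinct limit profiles sharing the same support can differ in edge density. One must argue either that a hypothetical $\mathcal{F}$-free $G$ of density $\alpha + \eps$ can be spliced with the $H_i$ (via blow-ups or amalgamation) to contradict $\pi(\mathcal{F}_i) = \alpha_i$ for large $i$, or that the joint subgraph-density profile of $\mathcal{F}$-free graphs is constrained by Cauchy--Schwarz-type inequalities forcing edge density at most $\alpha$. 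Making either rigorous is the technical heart of the theorem and is presumably where Pikhurko's argument does its real work, likely via hypergraph supersaturation/removal or a flag-algebraic convexity argument.
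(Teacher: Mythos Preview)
The paper does not contain its own proof of this statement: it is quoted as a theorem of Pikhurko with a citation to \cite{pikhurko12} and then used as a black box (for instance, in deriving Lemma~\ref{lem:lambdaclos}). So there is no ``paper's proof'' to compare your attempt against.

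That said, your write-up is explicitly incomplete, and you correctly locate the gap. Your lower bound $\pi(\mathcal{F})\ge\alpha$ is fine once one is slightly more careful: a random $n$-vertex induced subgraph of $H_i$ (with $i$ large) has density close to $\alpha$, and for the finitely many $F\in\mathcal{F}$ with $v(F)\le n$ the hypergraph removal lemma lets you delete $o(1)\binom{n}{r}$ edges to kill all copies; the resulting graph on $n$ vertices cannot contain any $F\in\mathcal{F}$ at all. The upper bound, however, is a genuine gap. Your family $\mathcal{F}=\{F:p_\infty(F)=0\}$ only guarantees that every $\mathcal{F}$-free $G$ embeds in $H_i$ for infinitely many $i$, hence is $\mathcal{F}_i$-free for those $i$; but this yields only $e(G)\le \ex(v(G),\mathcal{F}_i)$, and for fixed $v(G)$ you have no control on $\ex(v(G),\mathcal{F}_i)/\binom{v(G)}{r}$ as $i$ varies, so no bound near $\alpha$ follows. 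Neither of the two mechanisms you speculate about (amalgamating $G$ with the $H_i$ via blow-ups, or flag-algebra convexity) is how the argument is actually closed.

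Pikhurko's route is more elementary than what you are reaching for: it is a compactness/diagonalisation on the hereditary properties themselves rather than an analytic argument on densities. One passes to a subsequence so that, for \emph{every} $n$, the collection of $\mathcal{F}_i$-free $r$-graphs on $[n]$ stabilises; the limiting hereditary class $\mathcal{P}$ then has $\ex(n,\mathcal{P})=\ex(n,\mathcal{F}_i)$ for all sufficiently large $i$ (depending on $n$), and one extracts both inequalities from this together with the monotonicity of $\ex(n,\cdot)/\binom{n}{r}$ in $n$. The point is that the forbidden family should be chosen so that the $\mathcal{F}$-free graphs on each fixed vertex set \emph{coincide} with the $\mathcal{F}_i$-free graphs for large $i$, not merely embed into some $H_i$; your density-zero criterion is too coarse to force this.
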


Furthermore, Pikhurko proved the following.

\begin{theorem}[Pikhurko, \cite{pikhurko12}]
For every $r \geq 3$ the set $\Pi_{\infty}^{(r)}$ has cardinality of the continuum.
\end{theorem}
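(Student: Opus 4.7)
The plan is to prove the nontrivial lower bound $|\Pi_{\infty}^{(r)}| \geq 2^{\aleph_0}$; the matching upper bound is immediate from $\Pi_{\infty}^{(r)} \subseteq [0,1]$. The first Pikhurko theorem (closedness of $\Pi_{\infty}^{(r)}$) is crucial here, because it allows one to invoke a classical topological dichotomy: by the Cantor--Bendixson theorem, every closed subset of $\mathbb{R}$ is the disjoint union of a countable set and a (possibly empty) perfect set, and every nonempty perfect subset of $\mathbb{R}$ has cardinality of the continuum. Hence it suffices to show that $\Pi_{\infty}^{(r)}$ is uncountable.

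To establish uncountability, I would start from the Frankl--R\"odl theorem cited above, which shows that $1-1/l^{r-1}$ is a non-jump for each $l>2r$. A single non-jump only produces a countable sequence of accumulating Tur\'an densities, so the goal is to iterate the non-jump phenomenon. The strategy is to build a Cantor scheme inside $\Pi_{\infty}^{(r)}$: at each stage, refine a Tur\'an density $\beta$ already produced into two distinct nearby Tur\'an densities $\beta^{(0)} < \beta^{(1)}$, and continue recursively into each branch. Choosing the spacings at level $n$ to shrink sufficiently fast (say, faster than the separation between siblings at that level), the $2^{\aleph_0}$ infinite branches converge to $2^{\aleph_0}$ pairwise distinct limits in $[0,1]$, all of which lie in $\Pi_{\infty}^{(r)}$ by closedness.

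The chief obstacle is the refinement step: Frankl--R\"odl supplies accumulating densities only at the specific values $1-1/l^{r-1}$, not at an arbitrary Tur\'an density $\beta$ that may appear deeper in the iteration. To overcome this, I would seek a more flexible variant of Frankl--R\"odl that creates accumulation at many values simultaneously---for example by rescaling the construction inside a blow-up, or by grafting the Frankl--R\"odl configuration onto an extremal graph realising $\beta$. An alternative route avoiding iteration altogether is a direct parameterised construction: for each $b \in \{0,1\}^{\mathbb{N}}$ exhibit an infinite family $\mathcal{F}_b$ of $r$-graphs and verify that the assignment $b \mapsto \pi(\mathcal{F}_b)$ is injective. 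Either route hinges on quantitative control of how perturbations of the family $\mathcal{F}$ affect $\pi(\mathcal{F})$, and that is where the main technical work of the proof would reside.
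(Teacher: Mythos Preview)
The paper does not prove this theorem; it merely quotes it as a result of Pikhurko \cite{pikhurko12}. So there is no in-paper proof to compare against, and your proposal must be assessed on its own terms and against Pikhurko's original argument.

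Your Cantor--Bendixson reduction is correct and worth keeping: since $\Pi_{\infty}^{(r)}$ is closed in $[0,1]$, it suffices to show uncountability, and then the perfect-set dichotomy upgrades this to cardinality $2^{\aleph_0}$ without further work. This is a genuine simplification over trying to exhibit continuum many densities directly.

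However, what you have written is a plan, not a proof. You correctly isolate the obstacle---Frankl--R\"odl produces accumulation only at the specific points $1-1/l^{r-1}$, not at an arbitrary density $\beta$ arising deeper in a Cantor scheme---but you do not resolve it. Both proposed routes (iterated non-jumps via ``grafting'', and a direct parameterised family $b\mapsto \mathcal{F}_b$) are left as intentions, with the quantitative control over $\pi(\mathcal{F})$ explicitly deferred. As it stands there is no argument establishing uncountability.

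For orientation: Pikhurko's actual proof follows your second route. He introduces recursive \emph{patterns} (iterated blow-up schemes where one is allowed to recurse into designated parts), shows that the resulting limiting density is always a Tur\'an density of an infinite family, and then constructs, for each binary sequence $b\in\{0,1\}^{\mathbb{N}}$, a pattern whose density encodes $b$ injectively. The injectivity comes from explicit numerical estimates on how each bit of $b$ shifts the density, with the shifts decaying fast enough that distinct sequences cannot collide. Your first route (iterating the non-jump phenomenon inside a Cantor scheme) is morally the same idea viewed topologically, but making the ``grafting'' precise amounts to reinventing Pikhurko's recursive patterns. If you want to complete the argument, that machinery is what you need to build.
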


In particular, as $\Pi_{\fin}^{(r)}$ is countable, this means $\Pi_{\infty}^{(r)} \neq \Pi_{\fin}^{(r)}$ for $r \geq 3$.

It is an open question if $\Pi_{\infty}^{(r)}$ contains an interval of positive length for $r \geq 3$. Proving that a certain number \textit{does not belong} to $\Pi_{\infty}^{(r)}$ seems to be very hard. So far Baber and Talbot \cite{Baber11} proved that $[0.2299, 0.2316) \cap \Pi_{\infty}^{(3)} = \emptyset$, that $\pi(K_4^{3-})$ is a jump for $3$-graphs, and by upper-bounding $\pi(K_4^{3-})$, they proved that $[0.2871, 8/27) \cap \Pi_{\infty}^{(3)} = \emptyset$ (here $K_4^{3-}$ denotes the complete $3$-graph on $4$ vertices minus an edge). The proof uses flag algebras, introduced and developed by Razborov \cite{Razborov1}. Flag algebras have been successfully used for computing Tur\'an densities in certain special cases (\cite{Razborov2}, \cite{BaberT11}, \cite{ravry11}), and also for solving several open questions in graph theory (\cite{hladky12}, \cite{grzesik12}, \cite{hladky13}, \cite{kral12}, \cite{baber13}, \cite{roman13}).

This practically represents all that is known about the topological structure of $\Pi_{\infty}^{(r)}$.

On the algebraic side, it was proved by Baber and Talbot \cite{BaberT11} that $\Pi_{\fin}^{(3)}$ contains irrational numbers, disproving a conjecture of Chung and Graham \cite{ChungGraham}. Pikhurko independently proved the following more general result.

\begin{theorem}[Pikhurko, \cite{pikhurko12}]
\label{thm:pikirrat}
For every $r \geq 3$ the set $\Pi_{\fin}^{(r)}$ contains an irrational number.
\end{theorem}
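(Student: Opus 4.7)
The plan is to exhibit, for each $r \geq 3$, an explicit finite family $\mathcal{F}$ of $r$-graphs whose Tur\'an density can be both computed and shown to be irrational. The guiding heuristic is that whenever $\pi(\mathcal{F})$ is attained asymptotically by blow-ups of a fixed ``pattern'' $r$-graph $K$, its value equals the Lagrangian
\[
\lambda(K) \;=\; \max\Big\{\, r!\sum_{e\in K}\prod_{v\in e}x_v \;:\; x_v \geq 0,\ \sum_{v\in V(K)} x_v = 1\,\Big\}.
\]
If $K$ is chosen with enough asymmetry, the Karush--Kuhn--Tucker conditions at the optimiser translate into a polynomial system whose solutions lie in a non-trivial algebraic extension of $\mathbb{Q}$, forcing $\lambda(K)$, and hence $\pi(\mathcal{F})$, to be irrational.

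First I would design the pattern $K$. By taking $K$ to have two automorphism orbits of distinct sizes, or by choosing an asymmetric edge distribution, one can arrange that the partial-derivative equations of $r!\sum_{e}\prod_{v}x_v$ force the optimal weights to satisfy an irreducible quadratic over $\mathbb{Q}$. A $K$ of this flavour is implicit in the Baber--Talbot construction \cite{BaberT11} for $r=3$, and for each $r \geq 4$ an analogous pattern on a small vertex set can be written down and checked to have irrational Lagrangian by direct computation.

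With $K$ fixed, the lower bound $\pi(\mathcal{F}) \geq \lambda(K)$ is the easier half. Taking an $n$-vertex blow-up of $K$ whose part sizes approximate the Lagrangian optimiser produces an $r$-graph with $(\lambda(K) - o(1))\binom{n}{r}$ edges, and as long as $\mathcal{F}$ is disjoint from the set of $r$-graphs appearing as subgraphs of blow-ups of $K$, this yields $\ex(n, \mathcal{F}) \geq (\lambda(K) - o(1))\binom{n}{r}$.

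The main obstacle is the matching upper bound $\pi(\mathcal{F}) \leq \lambda(K)$. The strategy is to choose $\mathcal{F}$ as a list of small forbidden configurations whose absence forces an $\mathcal{F}$-free $r$-graph to be structurally close to a blow-up of $K$, and then to run a stability argument: any $\mathcal{F}$-free $r$-graph with density close to $\pi(\mathcal{F})$ must, after deletion of $o(n^r)$ edges, be a blow-up of $K$, so maximising the edge count reduces to optimising the Lagrangian of $K$. For $r=3$ this step is carried out via flag algebras by Baber and Talbot; for general $r$ one would rely on hypergraph stability machinery (Zykov-style symmetrisation together with a suitable hypergraph removal lemma), which is precisely the kind of technique developed in Pikhurko's cited paper.
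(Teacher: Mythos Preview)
The paper does not give its own proof of this theorem: it is quoted as a result of Pikhurko \cite{pikhurko12} and used as background. The closest the paper comes is Corollary~\ref{cor:irrat}, which handles only even $r\ge 4$ by an entirely different route: it uses the semigroup operation $\oplus_r$ on $\Pi_{\fin}^{(r)}$ (Theorem~\ref{thm:local}) together with Sidorenko's fact that $\tfrac12\in\Pi_{\fin}^{(2k)}$, computing $\tfrac12\oplus_r 0 = 1 - (1+\sqrt[r-1]{2})^{-(r-1)}$ and checking this is irrational. So there is no ``paper's own proof'' of the full statement to compare against.

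That said, your sketch is not yet a proof, and it also misidentifies the mechanism behind Pikhurko's argument. Two concrete issues. First, the paper explicitly notes (just after the definition of $G$-constructions in Section~3) that the key step in Pikhurko's proof of Theorem~\ref{thm:pikirrat} is to allow \emph{recursive} $G$-constructions, where the construction is applied iteratively inside some of the parts; the resulting density is then governed not by a single Lagrangian $\lambda(K)$ but by a fixed-point equation, and it is this recursion that produces the irrationality. Your plan, based on a one-shot blow-up of a fixed pattern $K$ with $\pi(\mathcal{F})=\lambda(K)$, is a different and harder route: you would need to manufacture, for every $r$, a small $r$-graph with provably irrational Lagrangian \emph{and} carry out a full stability/exactness analysis for it, and ``can be written down and checked'' is not an argument. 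Second, the upper bound step is where all the work lies. Saying one ``would rely on hypergraph stability machinery'' does not discharge it; Pikhurko's paper develops a substantial apparatus (of which the Rigidity and Collapsing Lemmas in the present paper are an adaptation) precisely because generic stability plus removal does not by itself pin down $\pi(\mathcal{F})$ to a prescribed Lagrangian. As written, your proposal is a plausible outline but with the two load-bearing steps --- the explicit pattern for each $r$ and the matching upper bound --- left as assertions.
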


The following question is due to Jacob Fox.

\begin{question}[Jacob Fox]
\label{ques:fox}
Does $\Pi_{\fin}^{(r)}$ contain a transcendental number?
\end{question}

Again this represents the current knowledge about the algebraic structure of $\Pi_{\fin}^{(r)}$.

Let us suppose for a moment that the answer to Question \ref{ques:fox} is negative. Then $\pi(\mathcal{F})$ is algebraic for any finite family $\mathcal{F}$. Any proof of this fact would likely supply a computable upper bound $K_{\mathcal{F}}$ on the degree and $\|\cdot\|_\infty$-norm of an integer polynomial having $\pi(\mathcal{F})$ as a root. Thus one could enumerate all integers polynomials bounded by $K_{\mathcal{F}}$, compute their roots, and obtain a finite set $S_{\mathcal{F}}$ containing $\pi(\mathcal{F})$. On the other hand, it is known that for any $\eps > 0$ and any finite family $\mathcal{F}$, one can approximate $\pi(\mathcal{F})$ up to an $\eps$ error by using a simple, albeit very inefficient algorithm (see \cite{keevash11}, page 3, for more details). Choosing $\eps$ small enough so that the distance between any two elements of $S_{\mathcal{F}}$ is at least $2\eps$, and applying the above algorithm, one could \textit{in principle} determine $\pi(\mathcal{F})$, thus solving the question of computing Tur\'an densities. Of course in practice this method would be too inefficient to be applied, but nevertheless it would provide a promising theoretical foundation for more sophisticated methods.

As a consequence we shall try to answer Question \ref{ques:fox} in the negative. Unfortunately proving that a certain number is algebraic is a difficult task. It would greatly help us in our endeavour if the set $\Pi_{\fin}^{(r)}$ would have a ring-like structure.

This suggests the following approach. Let $f \in \mathbb{Q}[x]$ be an arbitrary polynomial with rational coefficients and $\alpha$ a Tur\'an density. Under what conditions is $f(\alpha)$ a Tur\'an density? One of the goals of this paper is to give a partial answer to this question.

As nothing prevents us from taking the product $\pi(K_4^3)\pi(K_5^4)$, say, and this is still a real number\footnote{This number is believed to be $\frac{55}{144}$.} in $[0,1]$, it makes sense to first try to understand the effect of real multiplication on Tur\'an densities. It turns out that multiplying two Tur\'an densities comes close to giving another Tur\'an density, although one has to change the uniformity degree.

Formally, define \textit{the set of Tur\'an densities} as
\begin{equation*}
\Pi_{\infty} := \{(\alpha, r) : \alpha \in \Pi_{\infty}^{(r)}, r \geq 0\},
\end{equation*}
and \textit{the set of finite Tur\'an densities} as
\begin{equation*}
\Pi_{\fin} := \{(\alpha, r) : \alpha \in \Pi_{\fin}^{(r)}, r \geq 0\}.
\end{equation*}
For technical reasons we set here $\Pi_{\infty}^{(0)} = \Pi_{\fin}^{(0)} = \{1\}$.

We now define a binary operation $*$ on the set $\mathbb{R} \times \mathbb{N}$, which obviously contains $\Pi_{\infty}$:
\begin{align*}
  * \colon (\mathbb{R} \times \mathbb{N}) \times (\mathbb{R} \times \mathbb{N}) &\to \mathbb{R} \times \mathbb{N}\\
  (\alpha, r) \times (\beta, s) \phantom{xx} &\mapsto (\alpha\beta\binom{r+s}{r}\frac{r^rs^s}{(r+s)^{r+s}}, r+s).
\end{align*}

Our first result reads as follows.

\begin{theorem}
\label{thm:global}
$(\Pi_{\infty}, *)$ is a commutative cancellative monoid\footnote{Cancellative means $a+b = a+c$ implies $b=c$. This property ensures the existence of an embedding of the monoid in its Grothendieck group. One can always take the Grothendieck group of a monoid, but if it is not cancellative, the group may contain just one element.}.
\end{theorem}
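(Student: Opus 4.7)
The plan is to verify each monoid axiom for $(\Pi_\infty, *)$. Only closure carries combinatorial content; associativity, commutativity, the identity, and cancellation all reduce to short arithmetic checks on the constant
\[
C(r,s) := \binom{r+s}{r}\frac{r^r s^s}{(r+s)^{r+s}}.
\]
So the bulk of the proof is the closure statement: if $\alpha \in \Pi_\infty^{(r)}$ and $\beta \in \Pi_\infty^{(s)}$, then $\alpha\beta\, C(r,s) \in \Pi_\infty^{(r+s)}$.

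The guiding observation is that $C(r,s)$ is precisely the asymptotic fraction of $(r+s)$-subsets of an $N$-set that split as $(r,s)$ across a partition $V_1\sqcup V_2$ with $|V_1|/N \to r/(r+s)$. This suggests the following lower-bound (``tensor-product'') construction: starting from families $\mathcal{F}, \mathcal{G}$ with $\pi(\mathcal{F})=\alpha$, $\pi(\mathcal{G})=\beta$, take near-extremal $\mathcal{F}$- and $\mathcal{G}$-free graphs $F$ on $V_1$ and $G$ on $V_2$, and let $H$ on $V_1\cup V_2$ contain an $(r+s)$-edge $e$ exactly when $|e\cap V_1|=r$, $|e\cap V_2|=s$, $e\cap V_1\in E(F)$, and $e\cap V_2\in E(G)$. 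A direct count yields edge-density $\alpha\beta\, C(r,s) + o(1)$. I would then define a forbidden $(r+s)$-graph family $\mathcal{F}\boxtimes\mathcal{G}$, built by ``stitching together'' members of $\mathcal{F}$ on an $r$-side and members of $\mathcal{G}$ on an $s$-side across all admissible bipartitions, chosen so that $H$ is $\mathcal{F}\boxtimes\mathcal{G}$-free.

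The main obstacle is the matching upper bound: any $\mathcal{F}\boxtimes\mathcal{G}$-free $(r+s)$-graph $H$ on $N$ vertices has at most $(\alpha\beta\, C(r,s)+o(1))\binom{N}{r+s}$ edges. Following Pikhurko's recent technique (advertised in the introduction), I would use an averaging argument: on a uniformly random balanced partition $V(H)=V_1\sqcup V_2$ of the ratio $r:s$, the expected number of ``split'' edges of type $(r,s)$ is $C(r,s)\,e(H)+o(\binom{N}{r+s})$; for a typical $s$-subset $S\subseteq V_2$, the link $\{T\subseteq V_1 : T\cup S \in E(H)\}$ is an $r$-graph on $V_1$, and the definition of $\mathcal{F}\boxtimes\mathcal{G}$ is precisely tuned so that this link is $\mathcal{F}$-free; symmetrically, a dual $s$-link is $\mathcal{G}$-free. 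Double-averaging, combined with the defining densities $\alpha$ and $\beta$, then delivers the required bound. The argument can equivalently be phrased through graph limits or flag-algebra homomorphisms as in Pikhurko's paper, and I expect the careful definition of $\mathcal{F}\boxtimes\mathcal{G}$ to be the most delicate step.

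For the remaining axioms: associativity follows from the multinomial identity
\[
C(r,s)\,C(r+s,t) \;=\; \binom{r+s+t}{r,s,t}\,\frac{r^r s^s t^t}{(r+s+t)^{r+s+t}} \;=\; C(s,t)\,C(r,s+t),
\]
which is manifestly symmetric in $r,s,t$; commutativity is immediate from the symmetry of $C$; $(1,0)$ is the identity (using the convention $0^0:=1$); and cancellation reduces, after matching second coordinates, to $\alpha\beta=\alpha\gamma$, forcing $\beta=\gamma$ whenever $\alpha\neq 0$. The marginal case $\alpha=0$---where $*$ is absorbing---is the one subtle point; in the full write-up I would either verify that elements $(0,r)$ with $r\geq 1$ can be excluded (or identified) in $\Pi_\infty$ without harm, or restrict cancellation to the sub-monoid of strictly positive densities, which is the part actually relevant for passing to the Grothendieck group.
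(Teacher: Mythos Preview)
Your approach to closure is genuinely different from the paper's, and the sketch of the upper bound contains a real gap.

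The paper does \emph{not} construct a forbidden family $\mathcal{F}\boxtimes\mathcal{G}$ at all. Instead it works entirely through Lagrangians: for an $r$-graph $G$ and an $s$-graph $H$ on disjoint vertex sets, it defines the $(r+s)$-graph $G*H$ with edges $\{e\cup f : e\in E(G),\, f\in E(H)\}$ and proves by a one-variable optimisation that
\[
\lambda(G*H)=\lambda(G)\,\lambda(H)\,C(r,s).
\]
Since $\overline{\Lambda^{(r)}}=\Pi_\infty^{(r)}$ (Brown--Simonovits together with Pikhurko's closedness theorem), one picks sequences $G_n,H_n$ with $\lambda(G_n)\to\alpha$, $\lambda(H_n)\to\beta$; then $\lambda(G_n*H_n)\to\alpha\beta\,C(r,s)$, which therefore lies in $\overline{\Lambda^{(r+s)}}=\Pi_\infty^{(r+s)}$. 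No Tur\'an-type upper bound for any family is ever proved.

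Your averaging argument, as written, does not deliver the bound you need. From ``expected number of $(r,s)$-split edges $\approx C(r,s)\,e(H)$'' together with ``each $s$-link is $\mathcal{F}$-free'' you get
\[
C(r,s)\,e(H)\ \lesssim\ \alpha\,\binom{|V_1|}{r}\binom{|V_2|}{s}\ \approx\ \alpha\,C(r,s)\binom{N}{r+s},
\]
i.e.\ $e(H)\lesssim \alpha\binom{N}{r+s}$, not $\alpha\beta\,C(r,s)\binom{N}{r+s}$. Knowing that links in \emph{both} directions are free does not multiply the constraints: a random bipartite system with edge probability $\min(\alpha,\beta)$ has all links of density $\le\alpha$ and $\le\beta$, yet its split-edge density is $\min(\alpha,\beta)$, not $\alpha\beta$. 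More fundamentally, your scheme would, if it worked, produce a \emph{finite} family with Tur\'an density $\alpha\beta\,C(r,s)$ whenever $\alpha,\beta$ come from finite families---i.e.\ it would prove closure of $\Pi_{\fin}$ under $*$, which the paper explicitly leaves open. And for $\alpha=1$ (empty $\mathcal{F}$) your stitched family is empty, giving $\pi=1$ rather than $C(r,s)$.

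On cancellation you are right to flag the issue, and the paper does not address it: since $0\in\Pi_\infty^{(r)}$ for every $r\ge 1$, the element $(0,r)$ is absorbing and cancellation fails on the nose. The paper simply asserts cancellativity as ``an easy exercise''; strictly speaking, one must restrict to the sub-monoid of pairs with nonzero first coordinate (together with $(1,0)$) for the Grothendieck-group embedding to be meaningful.
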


In particular, $\mathbb{Z}\Pi_{\infty} \simeq \bigoplus_{r \geq 0} \mathbb{Z}\Pi_{\infty}^{(r)}$ is a graded ring under $*$ (here $\mathbb{Z}\Pi_{\infty}^{(r)}$ is the free abelian group generated by $\Pi_{\infty}^{(r)}$). Thus it makes sense to try to find a set of nontrivial relations which to quotient out from $\mathbb{Z}\Pi_{\infty}^{(r)}$. We will do this shortly, but first let us note the following corollary.

\begin{corollary}
\label{cor:jump}
Let $r \geq 3$ and $c > 0$. Suppose $c\frac{r!}{r^r}$ is a non-jump for $r$-graphs. Then $c\frac{q!}{q^q}$ is a non-jump for $q$-graphs, for any $q \geq r$.
\end{corollary}
\begin{proof}
Let $x_n$ be a sequence of elements in $\Pi_{\infty}^{(r)}$ converging to $c\frac{r!}{r^r}$ from above. Then $(1, 1) * (x_n, r) \in \Pi_{\infty}^{(r+1)}$ by Theorem \ref{thm:global} (here we abuse the notation slightly, as the result of $*$ is a pair, and not a number). Hence
\begin{equation*}
\lim_{n \rightarrow \infty} (1,1) * (x_n, r) = \lim_{n \rightarrow \infty} x_n\frac{r^r}{(r+1)^r} = c\frac{(r+1)!}{(r+1)^{r+1}}.
\end{equation*}
Thus $c\frac{(r+1)!}{(r+1)^{(r+1)}}$ is not a jump for $(r+1)$-graphs, and the claim follows by induction.
\end{proof}

Corollary \ref{cor:jump} was originally proved by Peng in \cite{peng09}.

A \textit{semiring} is a set $R$ with two binary operations $\otimes$ and $\oplus$ such that $(R, \oplus)$ is a commutative semigroup, $(R, \otimes)$ is a semigroup, and $\otimes$ is distributive over $\oplus$ (from left and from right). We do not require the existence of units. Furthermore $(R, \oplus, \otimes)$ is called \textit{commutative} if $(R, \otimes)$ is.

Now note that $\Pi_{\infty}^{(2)} = \{1-\frac{1}{k} : k \geq 1\} \cup \{1\}$ has the structure of a commutative semiring. This is obtained by transferring the semiring structure from $\mathbb{N}$ to $\Pi_{\infty}^{(2)}$ via the bijection $1 - \frac{1}{\ell} \rightarrow \ell$. Formally we define for all $a, b \geq 1$ and  $\alpha \in \Pi_{\infty}^{(2)}$,
\begin{align*}
(1 - \frac{1}{a}) \oplus_2 (1 - \frac{1}{b}) &= 1 - \frac{1}{a+b},\\
\alpha \oplus_2 1 = 1 \oplus_2 \alpha &= 1,
\end{align*}
and
\begin{align*}
(1 - \frac{1}{a}) \otimes_2 (1 - \frac{1}{b}) &= 1 - \frac{1}{ab}, \\
\alpha \otimes_2 1 = 1 \otimes_2 \alpha &= 1.
\end{align*}

Algebraically this means $\oplus_2$ maps $(\alpha, \beta)$ to the real number $1 - \frac{1-\alpha - \beta +\alpha\beta}{2 - \alpha -\beta}$, while $\otimes_2$ maps $(\alpha, \beta)$ to the real number $\alpha + \beta - \alpha\beta$, for any $\alpha, \beta \in \Pi_{\infty}^{(2)} \setminus \{1\}$. One may ask to what extent does this generalize to arbitrary $r$.

For any $r \geq	2$, let us define $\oplus_r : [0,1] \times [0,1] \rightarrow [0,1]$ by
\begin{equation}
\label{eq:addition}
\alpha \oplus_r \beta = 1 - \frac{1-\alpha-\beta+\alpha\beta}{(\sqrt[r-1]{1 - \alpha} + \sqrt[r-1]{1-\beta})^{r-1}},
\end{equation}
and $1 \oplus_r 1 = 1$. One sees that this is well-defined and for $r=2$ it agrees with the previous definition.

Our next result reads as follows.
\begin{theorem}
\label{thm:local}
$(\Pi_{\infty}^{(r)}, \oplus_r)$ is a commutative topological semigroup, and $\Pi_{\fin}^{(r)}$ is closed under $\oplus_r$.
\end{theorem}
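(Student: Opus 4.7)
First, observe that the substitution $\phi(\alpha) := (1-\alpha)^{-1/(r-1)}$ (with $\phi(1) := +\infty$) turns $\oplus_r$ into ordinary addition: substituting into~\eqref{eq:addition} one directly checks that $\phi(\alpha \oplus_r \beta) = \phi(\alpha) + \phi(\beta)$. Since $\phi$ is a strictly increasing homeomorphism $[0,1] \to [1,+\infty]$, the operation $\oplus_r$ is automatically commutative, associative, and continuous on $[0,1]^2$, with $1$ absorbing. Hence it is enough to show that $\Pi_\infty^{(r)}$ and $\Pi_\fin^{(r)}$ are closed under $\oplus_r$.

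Given families $\mathcal{F}, \mathcal{G}$ of $r$-graphs with $\pi(\mathcal{F}) = \alpha$ and $\pi(\mathcal{G}) = \beta$, let
\[
\mathcal{H} := \{\, J \,:\, \text{for every bipartition } V(J) = X \sqcup Y,\; J[X] \supseteq F \text{ for some } F \in \mathcal{F} \text{ or } J[Y] \supseteq G \text{ for some } G \in \mathcal{G}\,\},
\]
so that $J \notin \mathcal{H}$ iff $V(J)$ admits a ``good'' bipartition with one part $\mathcal{F}$-free and the other $\mathcal{G}$-free. The upper bound $\pi(\mathcal{H}) \le \alpha \oplus_r \beta$ is then almost immediate: any $\mathcal{H}$-free $K$ on $n$ vertices satisfies $K \notin \mathcal{H}$ (since $K$ is a subgraph of itself), so $V(K) = A \sqcup B$ with both restrictions clean; writing $p := |A|/n$,
\[
e(K) \le \ex(|A|,\mathcal{F}) + \ex(|B|,\mathcal{G}) + \binom{n}{r} - \binom{|A|}{r} - \binom{|B|}{r} \le \binom{n}{r}\bigl[1 - (1-\alpha)p^r - (1-\beta)(1-p)^r\bigr] + o(n^r),
\]
and elementary calculus shows the bracket is maximized at $p^{*} := (1-\beta)^{1/(r-1)}/\bigl((1-\alpha)^{1/(r-1)} + (1-\beta)^{1/(r-1)}\bigr)$ with value exactly $\alpha \oplus_r \beta$. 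The matching lower bound comes from a \emph{join construction}: on $V = A \sqcup B$ with $|A| = \lceil p^{*}n \rceil$, place near-extremal $\mathcal{F}$- and $\mathcal{G}$-free $r$-graphs on $A$ and $B$ and add every $r$-subset meeting both sides as an edge, obtaining $K_n$. Any putative embedding $\iota$ of some $H \in \mathcal{H}$ into $K_n$ would induce the bipartition $(\iota^{-1}(A), \iota^{-1}(B))$ of $V(H)$, which by the defining property of $\mathcal{H}$ would force, say, $H[X] \supseteq F$ for some $F \in \mathcal{F}$; but then $F$ injects via $\iota$ into the $\mathcal{F}$-free $K_n[A]$, a contradiction.

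The main obstacle lies in handling $\Pi_\fin^{(r)}$: when $\mathcal{F}, \mathcal{G}$ are finite, the family $\mathcal{H}$ above is typically infinite, since arbitrarily large $r$-graphs without a good bipartition exist. To prove $\alpha \oplus_r \beta \in \Pi_\fin^{(r)}$ I would construct a finite subfamily $\mathcal{H}_0 \subseteq \mathcal{H}$ on $r$-graphs of order at most some $v_0 = v_0(\mathcal{F},\mathcal{G},r)$ and show $\pi(\mathcal{H}_0) = \pi(\mathcal{H})$. This is exactly the local-to-global issue addressed by Pikhurko's technique \cite{pikhurko12}: a supersaturation-plus-cleanup argument should let one replace $\mathcal{H}$ by such a bounded-size witness family without changing the Tur\'an density.
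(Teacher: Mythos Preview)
Your treatment of the $\Pi_\infty^{(r)}$ part is correct and in fact cleaner than the paper's. The observation that $\phi(\alpha)=(1-\alpha)^{-1/(r-1)}$ conjugates $\oplus_r$ to ordinary addition dispatches commutativity, associativity and continuity in one line, whereas the paper verifies these directly (Lemma~\ref{lem:max2} and the first paragraph of Lemma~\ref{lem:local_inf}). For closure of $\Pi_\infty^{(r)}$, the paper takes a different route: it defines an $r$-multigraph operation $G\oplus_r H$, proves $\lambda(G\oplus_r H)=\lambda(G)\oplus_r\lambda(H)$ (Lemma~\ref{lem:addition}), and then uses that $\Lambda^{(r)}$ is dense in the closed set $\Pi_\infty^{(r)}$. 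Your direct construction of the forbidden family $\mathcal{H}$ bypasses Lagrangians entirely and gives $\pi(\mathcal{H})=\alpha\oplus_r\beta$ straight away; this is a genuinely different and more elementary argument for the infinite case.

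The $\Pi_{\fin}^{(r)}$ part, however, is not a proof but a statement of intent. You correctly identify that $\mathcal{H}$ is infinite and that one needs a finite $\mathcal{H}_0\subseteq\mathcal{H}$ with the same Tur\'an density, and you gesture at ``Pikhurko's technique''. But this is precisely where all the work lies, and ``supersaturation-plus-cleanup'' does not capture what is needed. The difficulty is that an $\mathcal{H}_0$-free $r$-graph $K$ on $n\gg v_0$ vertices need not admit a global good bipartition just because every bounded-size subgraph does. The paper resolves this via a stability argument spanning several pages: a Rigidity Lemma (Lemma~\ref{lem:rigid}) constructs specific $r$-graphs $C(P,Q,v,w)$ whose embeddings into any join $H_1\times H_2$ are severely constrained; a Collapsing Lemma (Lemma~\ref{lem:collapse}) then shows that any maximum $\mathcal{F}_{\alpha,\beta}$-free graph which is $\eps$-close to a join structure has at most $(\alpha\oplus_r\beta+\zeta)\binom{n}{r}$ edges; finally the Strong Removal Lemma for infinite families is invoked to force near-extremal graphs to be $\eps$-close to the join. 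None of these steps is routine, and the Rigidity Lemma in particular requires an inductive construction with no obvious shortcut. Your last paragraph would need to be expanded into something of comparable length and technical content to count as a proof.
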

Thus, for example, $\pi(K_4^{3})\oplus_3\pi(K_4^{3}) = \frac{3+\pi(K_4^{3})}{4}$ is the Tur\'an density of a finite family of $3$-graphs. Theorem \ref{thm:local} should be regarded as the main result of this paper\footnote{We remark that $\Pi_{\fin}^{(r)}$ has also a trivial semigroup structure given by the $\max$ operation.}.

As a consequence of Theorem~\ref{thm:local} one can construct explicit irrational densities for any $r \geq 3$.
\begin{corollary}
\label{cor:irrat1}
For every $r \geq 3$ the set $\Pi_{\fin}^{(r)}$ contains the irrational number
\begin{equation*}
\frac{r!}{r^r} \oplus_r 0 = 1 - \frac{r^{r-1} - (r-1)!}{\left(r+\sqrt[r-1]{r^{r-1}-(r-1)!}\right)^{r-1}}.
\end{equation*}
\end{corollary}
This in particular provides a new proof of Theorem~\ref{thm:pikirrat}. For even values of $r$ simpler examples can be given.
\begin{corollary}
\label{cor:irrat2}
For every even $r \geq 4$ the set $\Pi_{\fin}^{(r)}$ contains the irrational number $1 - \frac{1}{(1+\sqrt[r-1]{2})^{r-1}}$.
\end{corollary}
We defer the proofs of Corollaries~\ref{cor:irrat1} and~\ref{cor:irrat2} to Section $6$.

Combining Theorem~\ref{thm:global} with Theorem~\ref{thm:local} we obtain the following result.
\begin{corollary}
\label{cor:lebesgue}
For any $r \geq 2$ the following statements are equivalent:
\renewcommand{\theenumi}{(\roman{enumi})}
\begin{enumerate}
\item $\Pi_{\infty}^{(r)}$ has positive Lebesgue measure.
\item $\Pi_{\infty}^{(r)}$ contains an open interval.
\item For any $r' \geq r$, $\Pi_{\infty}^{(r')}$ has positive Lebesgue measure.
\item For any $r' \geq r$, $\Pi_{\infty}^{(r')}$ contains an open interval.
\end{enumerate}
\end{corollary}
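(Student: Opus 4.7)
The plan is to establish the cycle of implications (ii)~$\Rightarrow$~(iv)~$\Rightarrow$~(iii)~$\Rightarrow$~(i)~$\Rightarrow$~(ii). Three of these links are essentially tautological: (iv)~$\Rightarrow$~(iii) because every open interval has positive Lebesgue measure, (iii)~$\Rightarrow$~(i) by specialising $r' = r$, and (iv)~$\Rightarrow$~(ii) for the same reason. The work is concentrated in the two remaining implications (ii)~$\Rightarrow$~(iv) and (i)~$\Rightarrow$~(ii), which will invoke Theorems~\ref{thm:global} and~\ref{thm:local} respectively, the latter in tandem with Steinhaus's theorem.

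For (ii)~$\Rightarrow$~(iv), I would use that $(1,1) \in \Pi_{\infty}$ and compute
\[
(\alpha, r) * (1,1) \;=\; \bigl(\alpha \cdot \tfrac{r^r}{(r+1)^r},\; r+1\bigr).
\]
By Theorem~\ref{thm:global} this exhibits $\alpha \mapsto (r/(r+1))^r\alpha$ as a homothety sending $\Pi_{\infty}^{(r)}$ into $\Pi_{\infty}^{(r+1)}$. Since homotheties map open intervals to open intervals, iterating $r' - r$ times yields (iv).

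For (i)~$\Rightarrow$~(ii), the key observation is that $\oplus_r$ is conjugate to ordinary addition via the smooth bijection $\phi_r \colon [0,1) \to [1,\infty)$ defined by $\phi_r(\alpha) = (1-\alpha)^{-1/(r-1)}$. Indeed, starting from $(1-\alpha)(1-\beta) = 1-\alpha-\beta+\alpha\beta$ and unwinding~\eqref{eq:addition}, one checks directly that $\phi_r(\alpha \oplus_r \beta) = \phi_r(\alpha) + \phi_r(\beta)$. Now set $A := \Pi_{\infty}^{(r)} \cap [0,1)$; by (i) this has positive Lebesgue measure, so $\phi_r(A)$ does as well, since $\phi_r$ is a diffeomorphism. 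Steinhaus's theorem then guarantees that $\phi_r(A) + \phi_r(A)$ contains a nonempty open interval. On the other hand, Theorem~\ref{thm:local} together with the observation that $\alpha \oplus_r \beta = 1$ forces $\alpha = \beta = 1$ shows that $A$ is closed under $\oplus_r$, so $\phi_r(A)$ is closed under addition. Hence the interval already lies inside $\phi_r(A)$, and pulling back through $\phi_r^{-1}$ gives an open interval inside $\Pi_{\infty}^{(r)}$.

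I expect the only conceptual step to be locating the conjugating map $\phi_r$; once it is in hand, every remaining step is a direct application of an off-the-shelf result (Steinhaus's theorem, or Theorem~\ref{thm:global} applied to the generator $(1,1)$), and no quantitative estimates are needed.
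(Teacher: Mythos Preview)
Your proof is correct and follows essentially the same route as the paper's: your conjugating map $\phi_r$ is exactly the paper's $\mathfrak{h}$, and both arguments hinge on transporting $A = \Pi_\infty^{(r)}\setminus\{1\}$ to an additive sub-semigroup of $[1,\infty)$ and invoking Steinhaus, while (ii)~$\Rightarrow$~(iv) is handled identically via the $(1,1)$-action from Theorem~\ref{thm:global}. The only place where the paper is marginally more careful is in justifying that $\phi_r(A)$ has positive measure---it observes explicitly that $\phi_r^{-1}$ has bounded derivative on $[1,\infty)$ and is therefore Lipschitz---whereas you appeal to ``$\phi_r$ is a diffeomorphism'', which is correct but would benefit from the same one-line remark.
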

\begin{proof}
We show that (i) implies (ii).

Set $A := \Pi_{\infty}^{(r)} \setminus \{1\}$ and note that $A$ is a Borel set (as $\Pi_{\infty}^{(r)}$ is closed) and still has positive Lebesgue measure. Furthermore $A$ is a semigroup under $\oplus_r$.

Define $\mathfrak{h} : [0,1) \rightarrow [1, +\infty)$ by $\mathfrak{h}(x) = \left(\frac{1}{1-x}\right)^{1/(r-1)}$. Then $\mathfrak{h}$ is a homeomorphism and also a semigroup isomorphism between $([0,1), \oplus_r)$ and $([1, +\infty), +)$. As $A$ is Borel, $\mathfrak{h}(A)$ is Lebesgue measurable.

The inverse of $\mathfrak{h}$ is $\mathfrak{h}^{-1}(x) = 1 - \frac{1}{x^{r-1}}$, which has first-order derivative $(\mathfrak{h}^{-1})'(x) = (r-1)\frac{1}{x^r}$. This is bounded on $[1, +\infty)$ and therefore $\mathfrak{h}^{-1}$ is Lipschitz. Consequently $\mathfrak{h}(A)$ has positive Lebesgue measure.

Hence by Steinhaus's theorem\footnote{Usually Steinhaus's theorem refers to the following statement: if $A \subset \mathbb{R}$ is a set of positive measure then $A - A$ contains an open interval. We use here a more general version: if $A, B \subset \mathbb{R}$ are sets of positive measure then $A+B$ contains an open interval. This is equivalent to \textit{Th\'eor\`eme VII} from Steinhaus's paper \cite{steinhaus1920}.}, $\mathfrak{h}(A) + \mathfrak{h}(A)$ contains an open interval. As $\mathfrak{h}(A)$ is a semigroup, $\mathfrak{h}(A)$ contains an open interval. Consequently $A$ contains an open interval, proving (ii).

We show that (ii) implies (iv).

Suppose $\Pi_{\infty}^{(r)}$ contains an open interval. Multiplying with $(1,1)$ as in the proof of Corollary \ref{cor:jump}, we obtain an open interval in $\Pi_{\infty}^{(r+1)}$. Then (iv) follows by induction.

All the other implications are trivial or follow from these two.
\end{proof}

Unfortunately $*$ is not distributive over $\oplus_r$, and this prevents us from giving $\Pi_{\infty}$ a more meaningful ring structure.

More surprisingly, we were not able to find a proper generalization of $\otimes_2$, as the natural approach fails in a non-trivial manner. Nevertheless, in the process we have arrived at the following conjecture.
\begin{conjecture}
\label{conj:closure}
$\overline{\cup_{r \geq 2} \Pi_{\fin}^{(r)}} = \overline{\cup_{r \geq 2} \Pi_{\infty}^{(r)}} = [0,1]$.
\end{conjecture}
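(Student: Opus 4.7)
The conjecture asserts density of $\bigcup_{r \geq 2} \Pi_{\fin}^{(r)}$ in $[0,1]$ (the $\Pi_{\infty}$-version follows by taking closures, since Pikhurko's Theorem gives $\overline{\Pi_{\fin}^{(r)}}=\Pi_{\infty}^{(r)}$). Since each $\Pi_{\fin}^{(r)}$ contains $0$ (forbid a single edge) and $1$ (empty family), it suffices to show density in $(0, 1/2]$ and in $[1/2, 1)$, and I would tackle these separately.

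For $(0, 1/2]$ the argument is direct via $*$. By Theorem \ref{thm:global}, $(1, r) * (1, r) = (c_r, 2r)$ with $c_r := \binom{2r}{r}/4^r$, so $c_r \in \Pi_{\fin}^{(2r)}$ for every $r \geq 1$. The sequence $(c_r)_{r \geq 1}$ is strictly decreasing with $c_1 = 1/2$, $c_r \to 0$ by Stirling, and consecutive ratio $c_{r+1}/c_r = (2r+1)/(2r+2) \to 1$. The logarithmic gaps $\log c_r - \log c_{r+1}$ therefore vanish, so $\{c_r : r \geq 1\}$ is dense in $(0, c_1] = (0, 1/2]$.

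For $[1/2, 1)$ I would combine Theorem \ref{thm:sid} with Theorem \ref{thm:local}. For each even $r$, Sidorenko's densities $\{1 - 1/2^p : p \geq 1\}$ lie in $\Pi_{\fin}^{(r)}$, and their $\oplus_r$-subsemigroup corresponds, via the isomorphism $\mathfrak{h}_r(x) = (1/(1-x))^{1/(r-1)}$ from the proof of Corollary \ref{cor:lebesgue}, to the additive subsemigroup $S_r \subseteq [1, +\infty)$ generated by $\{2^{p/(r-1)} : p \geq 1\}$. Each $x \in S_r$ hence yields a Tur\'an density $1 - 1/x^{r-1} \in \Pi_{\fin}^{(r)}$. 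One could enlarge the generating family by incorporating the densities $c(r', r-r')$ for $r' \in \{1,\ldots,r-1\}$ (lying in $\Pi_{\fin}^{(r)}$ by $*$) and the irrational densities of Corollary \ref{cor:irrat}, and the plan is to show that as $r$ varies over even integers these densities cover a dense subset of $[1/2, 1)$.

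The main obstacle lies here. Elements of $S_r$ in $[1, 2)$ are exactly the single-term generators $\{2^{k/(r-1)} : 1 \leq k \leq r-2\}$ (every multi-term sum already exceeds $2 \cdot 2^{1/(r-1)} > 2$), whose $(r-1)$-th powers form only the discrete set $\{2, 4, \ldots, 2^{r-2}\}$; meanwhile $\mathfrak{h}_r^{-1}$ has Lipschitz constant of order $r$ near $x = 1$. Hence naive approximation of a target $t$ whose corresponding $\ell := 1/(1-t)$ is not close to a power of $2$ gives $O(1/\ell)$ density error, which is bounded away from $0$ whenever $t$ is bounded away from $1$. Overcoming this requires either a delicate Diophantine analysis of many-term sums in an enlarged generating family, whose $(r-1)$-th powers form a generically irrational subset of $[2, +\infty)$ whose distribution is unclear, or an altogether new construction of Tur\'an densities at intermediate values, and this is where I expect a proof attempt to stall.
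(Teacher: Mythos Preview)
First, a framing remark: the paper does \emph{not} prove Conjecture~\ref{conj:closure}. It is stated as an open conjecture; Section~6.2 gives only a conditional argument assuming $\Pi_{\infty}^{(r)}$ is closed under $\otimes_2$, an assumption the paper shows fails at the level of the natural construction (Target~1 is false). The remark after the statement of the conjecture notes that Pikhurko found a proof only after the paper was completed. So there is no ``paper's own proof'' to compare against.

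That said, your proposal has a genuine error in the $(0,1/2]$ half that makes the argument unsalvageable as written. You claim that because $c_r = \binom{2r}{r}/4^r$ is strictly decreasing with $c_{r+1}/c_r \to 1$, the set $\{c_r : r \ge 1\}$ is dense in $(0,1/2]$. This is false: a strictly monotone sequence is \emph{never} dense in an interval of positive length, since between any two consecutive terms there is a gap containing no element of the sequence. Concretely, $c_1 = 1/2$ and $c_2 = 3/8$, so no $c_r$ lies in $(3/8,1/2)$. Vanishing logarithmic gaps only tell you that the sequence eventually becomes finely spaced near~$0$; they say nothing about the early terms, and in any case density would require points between every pair of reals in the interval, which a single monotone sequence cannot provide. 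To get density in $(0,1/2]$ you would need, for each fixed target $t$, infinitely many distinct Tur\'an densities accumulating at $t$ --- one sequence indexed by $r$ cannot do this.

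For the $[1/2,1)$ half you correctly identify the obstruction and candidly say the argument stalls. That is an honest assessment, and it matches the state of affairs in the paper itself: the conditional route via $\otimes_2$ is exactly what the paper attempts and cannot complete. So on this half you are no worse off than the paper, but you should be aware that the $(0,1/2]$ half, which you present as settled, is not.
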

A detailed discussion of this is given in Section 7. Along the way we shall encounter a generalization of van der Waerden's conjecture stated in terms of hypergraphs (this generalization was conjectured to hold by Dittert).

The rest of the paper is organised as follows.

In Section 2 we introduce the notation used in this paper. Section 3 presents an important technique for proving results about $\Pi_{\infty}^{(r)}$, which is then used in Section 4 to prove Theorem~\ref{thm:global}. Section 5 is devoted to the proof of Theorem~\ref{thm:local}, the main result of this paper. Section 6 contains the proof of Corollaries~\ref{cor:irrat1} and~\ref{cor:irrat2}. Section 7 discusses the problem of whether $\Pi_{\infty}^{(r)}$ is a semiring, leading in a natural way to Conjecture~\ref{conj:closure}. Finally, Section 8 lists several open problems.

\medskip
\textbf{Remark.} After completion of this work Oleg Pikhurko \cite{pikhurko15} found a short proof of Conjecture~\ref{conj:closure}. 

\section{\normalsize Notation}

We introduce some notation needed in the sequel.

An \textit{$r$-multiset} $D$ is an unordered collection of $r$ elements $x_1, \ldots , x_r$ with repetitions
allowed. The multiplicity $D(x)$ of $x$ in $D$ is the number of times that $x$ appears in $D$.

A pair $G = (V, E)$ with $E \subseteq V^{(r)}$ is called an \textit{$r$-multigraph}. $V$ is the set of \textit{vertices} and $E$ the set of \textit{edges}. Note that every edge is an $r$-multiset. Furthermore, note that our definition is different from the usual definition of a multigraph where the same edge may appear multiple times in $E$. If all edges in $E$ are proper sets, then $G$ is called a (simple) $r$-graph. We let $v(G) := |V(G)|$ be the number of vertices and $e(G) := |E(G)|$ be the number of edges. The \textit{density} of an $r$-graph $G$ is
\begin{equation*}
d(G) = \frac{e(G)}{\binom{n}{r}}.
\end{equation*}
We do not define the density of an $r$-multigraph.

We allow graphs without edges, and we also consider $\emptyset$ to be an $r$-graph without vertices. We call $\emptyset$ \textit{the empty graph}.

If $G$ and $H$ are $r$-graphs, we say $H$ is a \textit{subgraph} of $G$ if $V(H) \subseteq V(G)$ and $E(H) \subseteq E(G)$. $H$ is \textit{induced} if it has the edge set $\{X \in E(G) : X \subseteq V(H)\}$.

If $G$ is an $r$-graph and $U \subseteq V(G)$ we let $G[U]$ be the induced subgraph on vertex set $U$. If no confusion can arise, we may identify $G[U]$ with $U$. We let $G \setminus U$ denote the induced subgraph on $V(G) \setminus U$. Furthermore if $x \in V(G)$ we let $d_U(x) = d_{G[U]}(x)$ denote the degree of $x$ with respect to $U$, i.e. the number of edges of $G$ containing $x$ and intersecting $U \setminus \{x\}$ in $r-1$ vertices.

If $G$ and $H$ are $r$-graphs on disjoint vertex sets we let $G \dot\cup H$ denote the $r$-graph on vertex set $V(G) \dot\cup V(H)$ and edge set $E(G) \dot\cup E(H)$. We call $G\dot\cup H$ the \textit{disjoint union} of $G$ and $H$. As $G$ can be replaced by an identical $r$-graph on vertex set $V(G) \times \{1\}$, and $H$ by an identical $r$-graph on vertex set $V(H) \times \{2\}$, the definition of $G \dot\cup H$ extends naturally to pairs of $r$-graphs which are not necessarily disjoint.

If $F$ and $G$ are $r$-graphs, a map $f: V(F) \rightarrow V(G)$ is a \textit{homomorphism} if it maps edges to edges. An \textit{embedding} is an injective homomorphism. We shall frequently abuse the notion of subgraph and say $F$ is a subgraph of $G$ if there exists an embedding of $F$ into $G$. We will denote this by $F \subseteq G$, and if no confusion can arise we may identify $F$ with the image of its embedding in $G$.

If $\mathcal{F}$ is a family of $r$-graphs, the \textit{closure of $\mathcal{F}$ under homomorphisms} is the family $\overline{\mathcal{F}}$ containing all $r$-graphs $G$ for which there exists $F \in \mathcal{F}$ and a surjective homomorphism $f : V(F) \rightarrow V(G)$ (here $f$ is surjective on $V(G)$, but $G$ may contain edges not in the image of $f$). If $\mathcal{F} = \overline{\mathcal{F}}$ then $\mathcal{F}$ is \textit{closed under homomorphisms}. If for any $G \in \overline{\mathcal{F}}$ there exists $F \subseteq G$ with $F \in \mathcal{F}$ we say $\mathcal{F}$ is \textit{weakly closed under homomorphisms}.

If $\mathcal{F}$ is a family of $r$-graphs, an $r$-graph $G$ is \textit{$\mathcal{F}$-free} if no subgraph of $G$ belongs to $\mathcal{F}$. 

We define the $(n-1)$-dimensional simplex as
\begin{equation*}
\Delta_n = \{ (x_1, \ldots, x_n) \in \mathbb{R}^n : \sum_{i=1}^n x_i =1, x_i \geq 0\}.
\end{equation*}

\section{\normalsize The Infinity Principle}

If $G$ is an $r$-multigraph on $[n]$, we define a polynomial $p_G(x)$ as follows:
\begin{equation*}
p_G(x_1, \ldots, x_n) := r!\sum_{D \in E(G)} \prod_{i=1}^n \frac{x_i^{D(i)}}{D(i)!}.
\end{equation*}
The \textit{Lagrangian} of $G$ is defined to be
\begin{equation}
\lambda(G) := \max\,\{p_G(\mathbf{x}) : \mathbf{x} \in \Delta_n \}.
\end{equation}
The maximum is attained as it is taken over a compact set and $p_G$ is continuous.
An element $\mathbf{x} \in \Delta_n$ such that $p_G(\mathbf{x}) = \lambda(G)$ is an \textit{optimal vector for $G$}. Note that $\lambda(G)=0$ implies that $G$ has no edges.
For technical reasons we also define $\lambda(\emptyset) = 0$.

For $r \geq 1$, let $\Lambda^{(r)}$ be the set of values $\lambda(G)$, with $G$ an $r$-graph. Note that we do not take into account non-simple $r$-multigraphs. Pikhurko proved the following.

\begin{theorem}[Pikhurko, \cite{pikhurko12}]
\label{thm:pikfinite}
$\Lambda^{(r)} \subseteq \Pi_{\fin}^{(r)}$.
\end{theorem}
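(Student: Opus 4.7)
The plan is to exhibit, for every $r$-graph $G$ with vertex set $[n]$ and Lagrangian $\lambda := \lambda(G)$ attained at some $\mathbf{x}^* \in \Delta_n$, a finite family of $r$-graphs whose Tur\'an density equals $\lambda$. My candidate, for a suitably large integer $N$, is the family $\mathcal{F}_N := \{H : v(H) \le N,\ H \textrm{ admits no homomorphism into } G\}$; this is finite since there are only finitely many $r$-graphs on at most $N$ labelled vertices.

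For the lower bound $\pi(\mathcal{F}_N) \ge \lambda$ I use the balanced blow-ups $B_t := G[\lfloor t x_1^* \rfloor, \ldots, \lfloor t x_n^* \rfloor]$. The obvious projection onto $V(G)$ is a homomorphism, so every subgraph of $B_t$ also admits one and hence $B_t$ is $\mathcal{F}_N$-free; a routine computation gives $d(B_t) \to p_G(\mathbf{x}^*) = \lambda$ as $t \to \infty$.

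The real work is in the upper bound. Take an $\mathcal{F}_N$-free $H$ on $m$ vertices; each $N$-subset $U \subseteq V(H)$ carries a homomorphism $\phi_U \colon H[U] \to G$. Writing $n_i := |\phi_U^{-1}(i)|$ and $\mathbf{y} := \mathbf{n}/N \in \Delta_n$, I bound
\[
e(H[U]) \le \sum_{e \in E(G)} \prod_{i \in e} n_i = \frac{N^r}{r!}\, p_G(\mathbf{y}) \le \frac{N^r \lambda}{r!}.
\]
Averaging this over the $N$-subsets and using $\sum_U e(H[U]) = e(H)\binom{m-r}{N-r}$ yields $d(H) \le \lambda \cdot N^r / (N(N-1)\cdots(N-r+1))$, i.e., $\pi(\mathcal{F}_N) \le \lambda(1+O(1/N))$.

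The main obstacle is closing this $O(1/N)$ gap to obtain the exact inequality $\pi(\mathcal{F}_N) \le \lambda$. The strategy I would pursue is a Zykov-type symmetrization: two vertices $u,v$ of an extremal $\mathcal{F}_N$-free $H$ having identical links can be replaced by two clones of the heavier one without decreasing $d(H)$, and this preserves $\mathcal{F}_N$-freeness because $\mathcal{F}_N$ is closed under collapsing same-link pairs of vertices (a collapsed graph admits a homomorphism into $G$ exactly when the original one does). Iterating on an extremal example produces a blow-up of some quotient $r$-graph $Q$ with $v(Q) \le N$, and the definition of $\mathcal{F}_N$ then forces $Q$ to admit a homomorphism into $G$. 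Hence the extremal $H$ is a sub-blow-up of $G$ with density at most $\lambda$, giving the desired exact bound and placing $\lambda$ in $\Pi_{\fin}^{(r)}$.
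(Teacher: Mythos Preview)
The paper does not itself prove this theorem; it is quoted from Pikhurko and explicitly described as ``long and difficult'', with only the weaker inclusion $\Lambda^{(r)} \subseteq \Pi_\infty^{(r)}$ established here (Lemma~\ref{lem:pattern}, via the Infinity Principle). So there is no in-paper proof to compare against directly, but your proposal can still be assessed on its merits and against what is known of Pikhurko's method.

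Your lower bound and the averaging upper bound $\pi(\mathcal{F}_N) \le \lambda \cdot N^r / \bigl(N(N-1)\cdots(N-r+1)\bigr)$ are both correct, and the observation that cloning preserves $\mathcal{F}_N$-freeness (because $\mathcal{F}_N$ is downward closed under homomorphic images) is also fine. The gap is entirely in the symmetrization step you invoke to remove the $O(1/N)$ error.

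First, the move as you state it is vacuous: if $u$ and $v$ have \emph{identical} links they already have equal degree, so there is no ``heavier one'', and replacing one by a clone of the other changes nothing. Presumably you intend the standard Zykov move (replace a lower-degree vertex by a clone of a higher-degree one), but for $r \ge 3$ this can strictly \emph{lose} edges whenever $u$ and $v$ lie together in some edge, so the move need not even be available in an extremal configuration. Second, and more fundamentally, even if one grants that some symmetrization leaves you with a blow-up of a quotient $Q$, nothing in your argument forces $v(Q) \le N$. Your appeal to the definition of $\mathcal{F}_N$ produces a homomorphism $Q \to G$ only \emph{after} you know $v(Q) \le N$; if the quotient is larger you learn nothing. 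The failure of Zykov-type symmetrization to terminate at a bounded quotient is precisely the obstacle that separates $r=2$ from $r \ge 3$, and your sketch does not address it.

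Pikhurko's actual route is the one mirrored in Section~5 of this paper: the infinite family $\mathcal{F}_\infty = \bigcup_N \mathcal{F}_N$ has Tur\'an density exactly $\lambda$ (this is essentially Lemma~\ref{lem:pattern}), and one then combines the Strong Removal Lemma with a stability/collapsing argument to extract a finite subfamily with the same density. Your averaging bound is a legitimate first step in that direction, but closing the gap to an exact equality requires the removal--stability machinery rather than symmetrization.
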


The weaker statement $\Lambda^{(r)} \subseteq \Pi_{\infty}^{(r)}$ is much simpler to prove. In particular if $e$ is an $r$-edge then $\lambda(e) = \frac{r!}{r^r} \in \Pi_{\infty}^{(r)}$. It was shown by Brown and Simonovits \cite{Brown84} that $\Lambda^{(r)}$ is dense in $\Pi_{\infty}^{(r)}$. As the latter is a closed set, this in fact proves the following.
\begin{lemma}
\label{lem:lambdaclos}
$\overline{\Lambda}^{(r)} = \Pi_{\infty}^{(r)}$.
\end{lemma}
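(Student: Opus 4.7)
The plan is to verify the two set-theoretic inclusions $\overline{\Lambda}^{(r)} \subseteq \Pi_{\infty}^{(r)}$ and $\Pi_{\infty}^{(r)} \subseteq \overline{\Lambda}^{(r)}$, invoking the three facts collected in the paragraph preceding the lemma: (a) $\Lambda^{(r)} \subseteq \Pi_{\fin}^{(r)} \subseteq \Pi_{\infty}^{(r)}$ from Theorem~\ref{thm:pikfinite} (even the weaker $\Lambda^{(r)} \subseteq \Pi_{\infty}^{(r)}$ suffices here), (b) the closedness of $\Pi_{\infty}^{(r)}$ in $[0,1]$ due to Pikhurko, and (c) the density of $\Lambda^{(r)}$ in $\Pi_{\infty}^{(r)}$ due to Brown and Simonovits. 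The first inclusion is immediate: $\Lambda^{(r)}$ sits inside the closed set $\Pi_{\infty}^{(r)}$ by (a) and (b), so its topological closure also lies in $\Pi_{\infty}^{(r)}$.

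For the reverse inclusion I would invoke (c). To see how (c) is established in this setting, fix $\alpha = \pi(\mathcal{F}) \in \Pi_{\infty}^{(r)}$ and $\eps > 0$, and let $G_n$ be an extremal $\mathcal{F}$-free $r$-graph on $n$ vertices, so that $d(G_n) \to \alpha$. Evaluating $p_{G_n}$ at the uniform vector $(1/n,\ldots,1/n)$ gives $\lambda(G_n) \geq r!\,e(G_n)/n^r = d(G_n)(1+o(1))$, hence $\lambda(G_n) \geq \alpha - \eps$ for $n$ sufficiently large. For a matching upper bound one takes a weighted blow-up of $G_n$ driven by an optimal vector: the density of the blow-up tends to $\lambda(G_n)$, and provided the blow-up remains $\mathcal{F}$-free (possibly after removing $o\bigl(\binom{nt}{r}\bigr)$ edges), one concludes $\lambda(G_n) \leq \pi(\mathcal{F}) + \eps$. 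Squeezing the two bounds shows $\lambda(G_n) \to \alpha$, which places $\alpha$ in $\overline{\Lambda}^{(r)}$.

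The one delicate point is the upper-bound step: for an arbitrary family $\mathcal{F}$ a blow-up of an $\mathcal{F}$-free graph can create new copies of elements of $\mathcal{F}$ by identifying previously distinct vertices. The standard remedy, which is precisely what Brown and Simonovits carry out in \cite{Brown84}, is to replace $\mathcal{F}$ by its closure under surjective homomorphisms, verify that this does not alter the Tur\'an density, and then observe that $\mathcal{F}$-freeness is now preserved under blow-ups by construction. This closure-compatibility is the only nontrivial ingredient; once (c) is in hand, the lemma reduces to the two-line closure argument sketched above.
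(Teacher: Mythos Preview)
Your proposal is correct and follows the same approach as the paper: combine the inclusion $\Lambda^{(r)}\subseteq\Pi_{\infty}^{(r)}$, Pikhurko's closedness of $\Pi_{\infty}^{(r)}$, and the Brown--Simonovits density result. The paper in fact gives no more than the one-line deduction from these three cited facts; your additional sketch of how Brown and Simonovits prove density (including the homomorphism-closure remedy for the blow-up step) is accurate supplementary detail that the paper simply defers to \cite{Brown84}.
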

We shall frequently rely on Lemma \ref{lem:lambdaclos} to transfer statements about $\Lambda^{(r)}$ to the whole of $\Pi_{\infty}^{(r)}$ via continuity.

Pikhurko further proved that $\lambda(G) \in \Pi_{\fin}^{(r)}$ for any $r$-multigraph $G$. We shall only need the following weaker statement.

\begin{lemma}
\label{lem:pattern}
For any $r$-multigraph $G$ we have $\lambda(G) \in \Pi_{\infty}^{(r)}$.
\end{lemma}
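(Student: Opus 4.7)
The plan is to approximate $\lambda(G)$ by Lagrangians of simple $r$-graphs and appeal to the fact that $\Pi_{\infty}^{(r)}$ is closed. Concretely, I will construct a sequence of simple $r$-graphs $H_t$ (suitable blow-ups of $G$) with $\lambda(H_t) \to \lambda(G)$; since each $\lambda(H_t) \in \Lambda^{(r)} \subseteq \Pi_{\infty}^{(r)}$ and $\Pi_{\infty}^{(r)}$ is closed by Pikhurko's theorem, this forces $\lambda(G) \in \Pi_{\infty}^{(r)}$.

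Fix an optimal vector $\mathbf{x}^* = (x_1^*, \ldots, x_n^*) \in \Delta_n$ for $G$. Vertices with $x_i^* = 0$ contribute nothing to $p_G(\mathbf{x}^*)$ and can be discarded, so I assume $x_i^* > 0$ for all $i$. Pick positive integers $a_i^{(t)}$ with $N_t := \sum_i a_i^{(t)} \to \infty$, $a_i^{(t)}/N_t \to x_i^*$, and hence $a_i^{(t)} \to \infty$; for $t$ sufficiently large, $a_i^{(t)} \geq \max_{D \in E(G)} D(i)$ for every $i$. Define $H_t$ as the blow-up of $G$: the vertex set is the disjoint union of blocks $V_i^{(t)}$ of size $a_i^{(t)}$, and for every multi-edge $D \in E(G)$ we include every $r$-set $e$ with $|e \cap V_i^{(t)}| = D(i)$ for each $i$. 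Under the multiplicity condition above, $H_t$ is a genuine simple $r$-graph on $N_t$ vertices.

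To see $\lambda(H_t) \to \lambda(G)$, I establish matching bounds. For the lower bound, I evaluate $p_{H_t}$ at the vector $\mathbf{y}^{(t)}$ that is constant equal to $x_i^*/a_i^{(t)}$ on $V_i^{(t)}$ (note that $\mathbf{y}^{(t)} \in \Delta_{N_t}$). Direct counting gives
\begin{equation*}
p_{H_t}(\mathbf{y}^{(t)}) = r!\sum_{D \in E(G)} \prod_{i=1}^n \binom{a_i^{(t)}}{D(i)} \left(\frac{x_i^*}{a_i^{(t)}}\right)^{D(i)},
\end{equation*}
and since $\binom{a}{d}/a^d \to 1/d!$ as $a \to \infty$ for fixed $d$, the right-hand side tends to $p_G(\mathbf{x}^*) = \lambda(G)$. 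For the upper bound, given any $\mathbf{y} \in \Delta_{N_t}$ set $x_i := \sum_{j \in V_i^{(t)}} y_j$; grouping edges of $H_t$ by their parent multi-edge yields
\begin{equation*}
p_{H_t}(\mathbf{y}) = r!\sum_{D \in E(G)} \prod_{i=1}^n e_{D(i)}(y_j : j \in V_i^{(t)}),
\end{equation*}
where $e_d$ denotes the $d$-th elementary symmetric polynomial. The standard inequality $e_d(z_1, \ldots, z_s) \leq (z_1 + \cdots + z_s)^d / d!$ for nonnegative $z_j$ (immediate from the multinomial expansion) bounds the right-hand side by $p_G(\mathbf{x}) \leq \lambda(G)$, so $\lambda(H_t) \leq \lambda(G)$.

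Finally, each $\lambda(H_t)$ lies in $\Lambda^{(r)} \subseteq \Pi_{\infty}^{(r)}$, and since $\Pi_{\infty}^{(r)}$ is closed, sending $t \to \infty$ yields $\lambda(G) \in \Pi_{\infty}^{(r)}$. The only genuinely nontrivial step is the upper bound on $\lambda(H_t)$, which rests on the elementary-symmetric-polynomial estimate applied block by block; everything else amounts to bookkeeping with blow-ups and limits.
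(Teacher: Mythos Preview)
Your argument is correct, but it follows a different route from the paper's. The paper proves Lemma~\ref{lem:pattern} via the Infinity Principle: it takes $G_n$ to be a maximum $G$-construction on $n$ vertices, observes that every induced subgraph of a $G$-construction is again a $G$-construction (hence has density at most $p_m/\binom{m}{r}$), and concludes $\lambda(G)=\Lambda_G\in\Pi_\infty^{(r)}$ directly from the Infinity Principle. This is entirely self-contained once the Infinity Principle is in hand. Your proof instead shows $\lambda(G)\in\overline{\Lambda}^{(r)}$ by exhibiting simple blow-ups $H_t$ with $\lambda(H_t)\to\lambda(G)$, and then invokes Pikhurko's closedness theorem (equivalently, Lemma~\ref{lem:lambdaclos}) to finish. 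The combinatorial core is the same---your upper bound $\lambda(H_t)\le\lambda(G)$ via the elementary-symmetric inequality is precisely the Lagrangian-level version of the density bound the paper uses---but you trade the elementary Infinity Principle for the much deeper closedness result as a black box. The paper's route is therefore more economical in its prerequisites, while yours makes transparent that $\lambda(G)$ for multigraphs lies in $\overline{\Lambda}^{(r)}$, which is a clean statement in its own right.
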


As Pikhurko's proof is long and difficult, we include here a short proof of Lemma \ref{lem:pattern}.

First we need a definition introduced by Pikhurko in \cite{pikhurko12}. We reproduce it here in a simplified variant that better suits our needs.

Let $G = (S, E)$ be an $r$-multigraph. Identify $S$ with $[m]$ and let $V_1, \ldots , V_m$ be disjoint sets with $V := V_1 \cup \ldots \cup V_m$. The \textit{profile} of an $r$-set $X \subseteq V$ (with respect to $V_1, \ldots , V_m$) is the $r$-multiset on $[m]$ that contains $i \in [m]$ with multiplicity $|X \cap V_i|$. For an $r$-multiset $Y \subseteq [m]$ let $Y((V_1, \ldots , V_m))$ consist of all $r$-subsets of $V$ whose profile is $Y$. We call this $r$-graph the \textit{blow-up} of $Y$ and the $r$-graph
\begin{equation*}
E((V_1 , \ldots , V_m)) := \bigcup_{Y \in E} Y((V_1 , \ldots , V_m ))
\end{equation*}
is called the \textit{blow-up of $E$} (with respect to $V_1, \ldots, V_m$). If all sets $V_i$ have the same size $t$, we denote $E((V_1, \ldots, V_m))$ by $G(t)$.

A $G$-construction on a set $V$ is any $r$-graph $E((V_1, \ldots ,V_m))$ obtained by taking a partition $V = V_1 \cup \ldots \cup V_m$. Let $p_n$ be the maximum number of edges of a $G$-construction on $n$ vertices. Then Pikhurko defined
\begin{equation*}
\Lambda_G := \lim_{n \rightarrow \infty} \frac{p_n}{\binom{n}{r}},
\end{equation*}
and proved that this limit always exists. It is easy to see that $\Lambda_G = \lambda(G)$. In fact Pikhurko defined a much larger class of $G$-constructions, where one is allowed to recursively apply the construction into some of the parts; this was a key step in his proof of Theorem \ref{thm:pikirrat}.

The main observation is now the following, which is implicit in \cite{FranklRodl84} and \cite{Brown84}.

\begin{lemma}[The Infinity Principle]
Let $\{G_n\}_{n \geq 1}$ be a sequence of $r$-graphs with $v(G_n) = n$ and $d(G_n) \rightarrow \alpha$. Suppose that for any sequence of $r$-graphs $H_n$ with $H_n \subseteq G_n$ and $v(H_n)$ tending to infinity, we have $\limsup_{n \rightarrow \infty} d(H_n) \leq \alpha$. Then $\alpha \in \Pi_{\infty}^{(r)}$.
\end{lemma}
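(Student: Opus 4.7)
The plan is to construct an explicit (infinite) family $\mathcal{F}$ of $r$-graphs with $\pi(\mathcal{F}) = \alpha$, built directly from the sequence $\{G_n\}$. The natural candidate is
\begin{equation*}
\mathcal{F} := \{F : F \not\subseteq G_n \text{ for every } n \geq v(F)\},
\end{equation*}
i.e.\ the family of $r$-graphs that fail to embed into \emph{any} sufficiently large $G_n$. By construction, no $G_n$ contains any member of $\mathcal{F}$, so every $G_n$ is $\mathcal{F}$-free. Hence $\ex(n,\mathcal{F}) \geq e(G_n)$ for all $n$, which upon dividing by $\binom{n}{r}$ and letting $n \to \infty$ yields $\pi(\mathcal{F}) \geq \alpha$.

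For the reverse inequality I argue by contradiction. Suppose $\pi(\mathcal{F}) \geq \alpha + \eps$ for some $\eps > 0$. Then there is a sequence of $\mathcal{F}$-free $r$-graphs $H^{(k)}$ with $v(H^{(k)}) = N_k \to \infty$ and $d(H^{(k)}) \geq \alpha + \eps/2$. Being $\mathcal{F}$-free, $H^{(k)}$ itself is not in $\mathcal{F}$, so by definition there exists $n_k \geq N_k$ with $H^{(k)} \subseteq G_{n_k}$. After passing to a subsequence I may assume the $n_k$ are strictly increasing, and in particular $n_k \to \infty$.

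Now I splice this data into a full sequence indexed by $n$: set $\widetilde{H}_{n_k} := H^{(k)} \subseteq G_{n_k}$ for each $k$, and $\widetilde{H}_n := G_n$ for all other $n$. Then $\widetilde{H}_n \subseteq G_n$ for every $n$, and $v(\widetilde{H}_n) \to \infty$ (on the subsequence $n_k$ because $N_k \to \infty$, off it because $v(G_n) = n$). The hypothesis of the lemma therefore applies and forces $\limsup_{n\to\infty} d(\widetilde{H}_n) \leq \alpha$. But $d(\widetilde{H}_{n_k}) = d(H^{(k)}) \geq \alpha + \eps/2$ for all $k$, so the $\limsup$ is at least $\alpha + \eps/2$, a contradiction. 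Hence $\pi(\mathcal{F}) \leq \alpha$, and combined with the lower bound $\pi(\mathcal{F}) = \alpha \in \Pi_{\infty}^{(r)}$.

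The only point requiring a bit of care is the splicing step: one must ensure that the constructed counter-sequence $\widetilde{H}_n$ genuinely satisfies $v(\widetilde{H}_n) \to \infty$ so that the hypothesis is invoked correctly. This is handled cleanly by filling in the non-special coordinates with the full $G_n$, whose vertex count goes to infinity automatically, so no obstacle of substance arises.
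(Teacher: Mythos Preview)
Your proof is correct and follows essentially the same approach as the paper: define $\mathcal{F}$ to be the family of $r$-graphs that embed into no $G_n$, get the lower bound $\pi(\mathcal{F})\ge\alpha$ from the sequence itself, and for the upper bound embed (near-)extremal $\mathcal{F}$-free graphs back into the $G_n$ and invoke the hypothesis on the resulting subgraph sequence. Your extra condition ``$n\ge v(F)$'' in the definition of $\mathcal{F}$ is harmless (it is automatic, since $v(G_n)=n$), and your explicit verification that $v(\widetilde{H}_n)\to\infty$ after splicing is if anything a bit more careful than the paper's presentation.
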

\begin{proof}
Define $\mathcal{F}_\infty := \{H : H \not\subseteq G_n \textrm{ for any $n \geq 1$}\}$. We claim $\pi(\mathcal{F_\infty}) = \alpha$.

Indeed, for any $m \geq 1$, let $T_m$ be a maximum $\mathcal{F}_\infty$-free $r$-graph on $m$ vertices. Then for each $m \geq 1$, $T_m \notin \mathcal{F_\infty}$, and hence $T_m \subseteq G_{n(m)}$, for some $n(m)$ depending on $m$. Define $\{H_n\}_{n \geq 1}$ in the following way. If there exists $m$ with $n(m) = n$, let $H_n := T_m$ (if several choices exists, choose one with maximum density). Otherwise let $H_n := G_n$. Then $\lim_{m \rightarrow \infty} d(T_m) \leq \limsup_{n \rightarrow \infty} d(H_n) \leq \alpha$, by assumption. Hence $\pi(\mathcal{F_\infty}) \leq \alpha$. However, by construction $\pi(\mathcal{F_\infty}) \geq \alpha$, and so equality holds.
\end{proof}

\begin{proof}[Proof of Lemma \ref{lem:pattern}]
Let $G_n$ be a maximum\footnote{i.e. with a maximum number of edges.} $G$-construction on $n$ vertices. Let $H_n \subseteq G_n$ be any sequence of subgraphs with number of vertices tending to infinity. W.l.o.g. we may assume that $H_n$ is an induced subgraph on $m(n)$ vertices. Then $H_n$ is by definition also a $G$-construction, and hence has no more than $p_{m(n)}$ edges. Thus by definition of $\Lambda_G$, we must have $\limsup d(H_n) \leq \Lambda_G$. Consequently by the Infinity Principle, $\lambda(G) = \Lambda_G \in \Pi_{\infty}^{(r)}$.
\end{proof}

If $G = (V, E)$ is an $r$-multigraph, we define $\overline{G} = (V, V^{(r)} \setminus E)$. One of the advantages of working with multigraphs is the following.

\begin{lemma}
\label{lem:duality}
For any $r$-multigraph $G$ on $[n]$ and any $\mathbf{x} \in \Delta_n$ we have
\begin{equation}
\label{eq:duality}
p_G(\mathbf{x}) + p_{\overline{G}}(\mathbf{x}) = 1.
\end{equation}
\end{lemma}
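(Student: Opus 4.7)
The plan is to recognize that the left-hand side of \eqref{eq:duality} is, by the definition of $\overline{G}$, the sum of contributions over \emph{all} $r$-multisets on $[n]$, and that this total is exactly $(x_1+\cdots+x_n)^r$ by the multinomial theorem.

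More precisely, I would first observe that since $E(G)$ and $E(\overline{G}) = [n]^{(r)} \setminus E(G)$ partition the set $[n]^{(r)}$ of all $r$-multisets on $[n]$, we have
\begin{equation*}
p_G(\mathbf{x}) + p_{\overline{G}}(\mathbf{x}) = r! \sum_{D \in [n]^{(r)}} \prod_{i=1}^n \frac{x_i^{D(i)}}{D(i)!}.
\end{equation*}
Next, I would apply the multinomial theorem to rewrite the right-hand side as
\begin{equation*}
\sum_{\substack{k_1, \ldots, k_n \geq 0 \\ k_1 + \cdots + k_n = r}} \binom{r}{k_1, \ldots, k_n} x_1^{k_1} \cdots x_n^{k_n} = (x_1 + x_2 + \cdots + x_n)^r,
\end{equation*}
where we have used that an $r$-multiset $D$ on $[n]$ is determined by its multiplicity vector $(D(1), \ldots, D(n))$, with $\binom{r}{D(1), \ldots, D(n)} = \frac{r!}{\prod_i D(i)!}$.

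Finally, since $\mathbf{x} \in \Delta_n$, we have $x_1 + \cdots + x_n = 1$, so the sum equals $1$, completing the proof. There is no real obstacle here; the lemma is essentially a bookkeeping identity that explains why working with multigraphs (so that multiplicities up to $r$ are allowed in each coordinate) gives a clean complementation formula, whereas restricting to simple $r$-graphs would omit the ``diagonal'' multisets and break the identity.
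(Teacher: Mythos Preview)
Your proof is correct and follows exactly the same approach as the paper: the paper's proof is the one-liner $p_G(\mathbf{x}) + p_{\overline{G}}(\mathbf{x}) = (\sum_{i=1}^n x_i)^r = 1$, and you have simply spelled out the multinomial-theorem justification behind that identity.
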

\begin{proof}
Note that $p_G(\mathbf{x}) + p_{\overline{G}}(\mathbf{x}) = (\sum_{i=1}^n x_i)^r = 1$.
\end{proof}

\section{\normalsize The global structure}

In this section we prove Theorem \ref{thm:global}.

It is an easy exercise to check that $*$ is commutative, associative and cancellative. Furthermore the unit is the element $(1, 0)$, under the convention $0^0 = 1$. Thus we only need to show that $\Pi_{\infty}$ is closed under $*$. To this end we make the following definition.

\begin{definition}
Let $r, s \geq 0$, $G$ be an $r$-graph and $H$ an $s$-graph on disjoint vertex sets. We define $G * H$ as the $(r+s)$-graph on vertex set $V(G) \cup V(H)$ and edge set $\{e \cup f : e \in E(G), f \in E(H)\}$.
\end{definition}

This definition was introduced by Emtander \cite{emtander2008} in connection with Betti numbers of hypergraphs. It was also considered by Bollob\'as, Leader and Malvenuto in the context of Tur\'an densities \cite{Bollobas11}. The definition of $G * H$ extends naturally to any two (not necessarily disjoint) uniform hypergraphs $G$ and $H$.

\begin{proposition}
\label{prop:max}
Let $r, s \geq 1$ and $\mathfrak{f}:[0,1] \rightarrow \mathbb{R}$ be given by $\mathfrak{f}(x) = x^r(1-x)^s$. Then $\mathfrak{f}$ has a unique maximum $x_0 := \frac{r}{r+s}$ and furthermore $\mathfrak{f}(x_0) = \frac{r^rs^s}{(r+s)^{r+s}}$.
\end{proposition}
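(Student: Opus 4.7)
The plan is straightforward single-variable calculus. First I would differentiate $\mathfrak{f}$ by the product rule to get
\[
\mathfrak{f}'(x) = rx^{r-1}(1-x)^s - sx^r(1-x)^{s-1} = x^{r-1}(1-x)^{s-1}\bigl[r - (r+s)x\bigr].
\]
Since $r, s \geq 1$, the factor $x^{r-1}(1-x)^{s-1}$ is strictly positive on the open interval $(0,1)$, so on $(0,1)$ the sign of $\mathfrak{f}'(x)$ agrees with the sign of $r - (r+s)x$. Hence $\mathfrak{f}$ is strictly increasing on $[0, x_0]$ and strictly decreasing on $[x_0, 1]$, where $x_0 := r/(r+s) \in (0,1)$. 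This proves that $x_0$ is the unique point of $[0,1]$ at which $\mathfrak{f}$ attains its maximum (the endpoint values $\mathfrak{f}(0) = \mathfrak{f}(1) = 0$ are eliminated since $\mathfrak{f}(x_0) > 0$).

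For the value of the maximum I would just substitute:
\[
\mathfrak{f}(x_0) = \left(\frac{r}{r+s}\right)^{r}\!\left(1 - \frac{r}{r+s}\right)^{s} = \left(\frac{r}{r+s}\right)^{r}\!\left(\frac{s}{r+s}\right)^{s} = \frac{r^r s^s}{(r+s)^{r+s}}.
\]

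There is no substantive obstacle in this proposition; the only point worth emphasizing is that the hypothesis $r, s \geq 1$ is exactly what is needed to ensure that $x^{r-1}(1-x)^{s-1}$ is nonzero on $(0,1)$, so that the sign analysis of $\mathfrak{f}'$ depends solely on the linear factor $r - (r+s)x$ and yields a \emph{unique} interior critical point.
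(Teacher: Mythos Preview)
Your proof is correct and follows the same approach as the paper's: compute $\mathfrak{f}'$, locate the unique interior critical point $x_0 = r/(r+s)$, and evaluate. Your sign analysis of the factored derivative is in fact more careful than the paper's version, which simply notes $\mathfrak{f}(0)=0$ to conclude $x_0$ is a maximum.
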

\begin{proof}
We see that $\mathfrak{f}'(x) = rx^{r-1}(1-x)^s - sx^r(1-x)^{s-1}$, and so $\mathfrak{f}'(x) = 0$ only happens for $x_0 := \frac{r}{r+s}$. As $\mathfrak{f}(0) = 0$, $\mathfrak{f}(x_0)$ must be a maximum point, and the claim follows.
\end{proof}

\begin{lemma}
Let $G$ be an $r$-graph and $H$ be an $s$-graph. Then $\lambda(G*H) = \lambda(G)\lambda(H)\binom{r+s}{r}\frac{r^rs^s}{(r+s)^{r+s}}$.
\end{lemma}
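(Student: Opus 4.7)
The plan is to work directly with the polynomial $p_{G*H}$ and decouple it into a product involving $p_G$ and $p_H$. Say $V(G)=[n]$, $V(H)=\{n+1,\ldots,n+m\}$, and write $\mathbf{x}\in\Delta_{n+m}$ as $(\mathbf{y},\mathbf{z})$ with $\mathbf{y}\in\mathbb{R}^n_{\geq 0}$ supported on $V(G)$ and $\mathbf{z}\in\mathbb{R}^m_{\geq 0}$ supported on $V(H)$. Since every edge of $G*H$ is of the form $e\cup f$ with $e\in E(G)$, $f\in E(H)$, and the two vertex sets are disjoint, $\prod_{i\in e\cup f}x_i=\prod_{i\in e}y_i\cdot\prod_{j\in f}z_j$. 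The factor $(r+s)!$ in the definition of $p_{G*H}$ splits as $(r+s)!=\binom{r+s}{r}r!\,s!$, so I would first establish the identity
\begin{equation*}
p_{G*H}(\mathbf{y},\mathbf{z}) \;=\; \binom{r+s}{r}\,p_G(\mathbf{y})\,p_H(\mathbf{z}),
\end{equation*}
which holds for all nonnegative $\mathbf{y},\mathbf{z}$ (not just those summing to one), using that $G$ and $H$ are simple graphs.

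Next I would exploit homogeneity. Set $t=\sum_{i=1}^n y_i\in[0,1]$, so that $\sum_{j}z_j=1-t$. If $t\in(0,1)$, then $\mathbf{y}/t\in\Delta_n$ and $\mathbf{z}/(1-t)\in\Delta_m$, and since $p_G$ is homogeneous of degree $r$ and $p_H$ of degree $s$,
\begin{equation*}
p_{G*H}(\mathbf{y},\mathbf{z}) \;=\; \binom{r+s}{r}\, t^{r}(1-t)^{s}\, p_G(\mathbf{y}/t)\,p_H(\mathbf{z}/(1-t)).
\end{equation*}
(The boundary cases $t=0$ and $t=1$ only give $p_{G*H}=0$, which is not a maximizer unless $G$ or $H$ has no edges, in which case both sides of the lemma vanish and there is nothing to prove.)

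The maximization over $\Delta_{n+m}$ now decouples: for fixed $t\in(0,1)$ I optimize $p_G(\mathbf{y}/t)$ over $\Delta_n$ and $p_H(\mathbf{z}/(1-t))$ over $\Delta_m$ independently, obtaining the supremum $\lambda(G)\lambda(H)$. The remaining one-variable problem is to maximize $t^r(1-t)^s$ over $[0,1]$, and Proposition \ref{prop:max} gives the unique maximizer $t_0=r/(r+s)$ with value $r^rs^s/(r+s)^{r+s}$. Combining,
\begin{equation*}
\lambda(G*H) \;=\; \binom{r+s}{r}\frac{r^{r}s^{s}}{(r+s)^{r+s}}\,\lambda(G)\lambda(H),
\end{equation*}
which is the desired identity.

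The two directions of the equality are handled simultaneously: the upper bound comes from noting that every $\mathbf{x}\in\Delta_{n+m}$ admits the above parametrization and thus cannot beat the product, while the lower bound comes from explicitly plugging in the optimal vectors for $G$ and $H$, scaled by $t_0$ and $1-t_0$ respectively. I do not anticipate a serious obstacle; the only delicate point is being careful that the argument does not break down at the boundary $t\in\{0,1\}$ and that the factor $\binom{r+s}{r}$ is tracked correctly when splitting $(r+s)!$.
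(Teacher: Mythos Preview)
Your proposal is correct and follows essentially the same approach as the paper: establish the factorization $p_{G*H}(\mathbf{y},\mathbf{z})=\binom{r+s}{r}p_G(\mathbf{y})p_H(\mathbf{z})$, use homogeneity to pull out $t^r(1-t)^s$ with $t=\sum y_i$, and then invoke Proposition~\ref{prop:max}. If anything, you are slightly more careful than the paper, which silently divides by $M$ and $1-M$ without commenting on the degenerate cases $M\in\{0,1\}$ or on $G$ or $H$ having no edges.
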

\begin{proof}
We assume w.l.o.g. that $G$ has vertex set $\{1, \ldots, n\}$ and $H$ has vertex set $\{n+1, \ldots, n+m\}$. Then $G*H$ has vertex set $[n+m]$.

By definition
\begin{equation*}
p_{G*H}(x_1,\ldots,x_n,y_1,\ldots,y_m) = \binom{r+s}{r}p_G(x_1,\ldots,x_n)p_H(y_1,\ldots,y_m).
\end{equation*}

Let $\mathbf{a} \in \Delta_n$ be an optimal vector for $G$ and $\mathbf{b} \in \Delta_m$ be an optimal vector for $H$. Let $\theta := \frac{r}{r+s}$. Then
\begin{align}
\lambda(G*H) &\geq p_{G*H}(\theta a_1,\ldots,\theta a_n, (1-\theta)b_1,\ldots,(1-\theta)b_m) \nonumber\\
&= \binom{r+s}{r}\theta^r\lambda(G)(1-\theta)^s\lambda(H) \nonumber\\
&=\lambda(G)\lambda(H)\binom{r+s}{r}\frac{r^rs^s}{(r+s)^{r+s}} \label{eq:lambda1}.
\end{align}

On the other hand, let $\mathbf{z} \in \Delta_{n+m}$ be an optimal vector for $G * H$. Set $M := \sum_{i=1}^n z_i$. Then
\begin{align*}
\lambda(G*H) &= p_{G*H}(z_1, \ldots, z_{n+m}) \\
&= \binom{r+s}{r}M^rp_G\left(\frac{z_1}{M}, \ldots, \frac{z_n}{M}\right)(1-M)^sp_H\left(\frac{z_{n+1}}{1-M}, \ldots, \frac{z_{n+m}}{1-M}\right) \\
&\leq \binom{r+s}{r}\lambda(G)\lambda(H)\mathfrak{f}(M) \\
&\leq \lambda(G)\lambda(H)\binom{r+s}{r}\frac{r^rs^s}{(r+s)^{r+s}}, \textrm{by Proposition \ref{prop:max}}.
\end{align*}
Together with \eqref{eq:lambda1} this proves the claim.
\end{proof}

\begin{proof}[Proof of Theorem \ref{thm:global}]
Let $(\alpha, r), (\beta, s) \in \Pi_{\infty}$. We want to show that $(\alpha, r) * (\beta, s) \in \Pi_{\infty}$.

We may assume that $r, s \geq 1$.

By Lemma \ref{lem:lambdaclos}, there exists a sequence of $r$-graphs $G_n$ with $\lambda(G_n) \rightarrow \alpha$. Similarly there exists a sequence of $s$-graphs $H_n$ with $\lambda(H_n) \rightarrow \beta$.
Then
\begin{align*}
\lim_{n\rightarrow \infty} \lambda(G_n * H_n) &= \lim_{n \rightarrow \infty} \left(\lambda(G_n)\lambda(H_n)\binom{r+s}{r}\frac{r^rs^s}{(r+s)^{r+s}}\right) \\
&= \alpha\beta\binom{r+s}{r}\frac{r^rs^s}{(r+s)^{r+s}}.
\end{align*}
Thus $\alpha\beta\binom{r+s}{r}\frac{r^rs^s}{(r+s)^{r+s}} \in \overline{\Lambda}^{(r+s)}$. Then Lemma \ref{lem:lambdaclos} completes the proof.
\end{proof}

\section{\normalsize The local structure}

In this section we prove Theorem \ref{thm:local}. The proof is naturally divided into two parts. We first prove the semigroup structure of $\Pi_{\infty}^{(r)}$ and then the closure of $\Pi_{\fin}^{(r)}$ under $\oplus_r$.

\subsection{\normalsize The semigroup structure}

Let $G$ and $H$ be two $r$-graphs on disjoint vertex sets. We define $G \oplus_r H$ as the $r$-multigraph with vertex set $V(G) \dot\cup V(H)$ and edge set 
\begin{equation*}
E(G \oplus_r H) = E(G) \dot\cup E(H) \dot\cup \{e \in (V(G) \cup V(H))^{(r)} : e \textrm{ \upshape{intersects both $V(G)$ and $V(H)$}}\}.
\end{equation*}
We then extend this definition to pairs of $r$-graphs with intersecting vertex sets in the same manner as before.

\begin{lemma}
\label{lem:max2}
Let $r \geq 2$ and $\alpha, \beta \in [0, 1]$. Define $\mathfrak{g}_{\alpha, \beta} : [0,1] \rightarrow \mathbb{R}$ by
\begin{equation*}
\mathfrak{g}_{\alpha, \beta}(x) = \alpha x^r + \beta (1-x)^r + r!\sum_{i=1}^{r-1}\frac{x^i(1-x)^{r-i}}{i!(r-i)!}.
\end{equation*}
 Then $\alpha \oplus_r \beta = \sup_{x \in [0,1]} \mathfrak{g}_{\alpha, \beta}(x)$, and moreover for $(\alpha, \beta) \neq (1,1)$, $\mathfrak{g}_{\alpha, \beta}$ is strictly concave and has a unique maximum at $x_{\alpha, \beta} := \frac{\sqrt[r-1]{1-\beta}}{\sqrt[r-1]{1-\alpha} + \sqrt[r-1]{1-\beta}}$.
\end{lemma}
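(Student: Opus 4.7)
The plan is to exploit the binomial identity $(x+(1-x))^r = 1$ to collapse the summation. Since $r!\sum_{i=0}^{r} \frac{x^i(1-x)^{r-i}}{i!(r-i)!} = 1$, the middle sum in the definition equals $1 - x^r - (1-x)^r$, so
$$\mathfrak{g}_{\alpha,\beta}(x) = 1 - (1-\alpha)x^r - (1-\beta)(1-x)^r.$$
Maximizing $\mathfrak{g}_{\alpha,\beta}$ on $[0,1]$ is therefore equivalent to minimizing the one-variable function $\phi(x) := (1-\alpha)x^r + (1-\beta)(1-x)^r$.

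For strict concavity I would compute
$$\phi''(x) = r(r-1)\bigl[(1-\alpha)x^{r-2} + (1-\beta)(1-x)^{r-2}\bigr] \ge 0.$$
The hypothesis $(\alpha,\beta)\neq(1,1)$ ensures at least one coefficient is strictly positive, and a brief case split (both coefficients positive, or exactly one) shows $\phi''$ vanishes at most at a single endpoint of $[0,1]$. This gives strict convexity of $\phi$, hence strict concavity of $\mathfrak{g}_{\alpha,\beta}$, so a unique maximizer exists. Setting $\phi'(x)=0$ yields $(1-\alpha)x^{r-1} = (1-\beta)(1-x)^{r-1}$; both sides are nonnegative, so taking $(r-1)$th roots produces $(1-\alpha)^{1/(r-1)}x = (1-\beta)^{1/(r-1)}(1-x)$, which solves immediately to the claimed expression for $x_{\alpha,\beta}$.

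To finish, write $s := (1-\alpha)^{1/(r-1)} + (1-\beta)^{1/(r-1)}$ so that $x_{\alpha,\beta} = (1-\beta)^{1/(r-1)}/s$ and $1-x_{\alpha,\beta} = (1-\alpha)^{1/(r-1)}/s$. A short calculation gives
$$\phi(x_{\alpha,\beta}) = \frac{(1-\alpha)(1-\beta)^{r/(r-1)} + (1-\beta)(1-\alpha)^{r/(r-1)}}{s^r} = \frac{(1-\alpha)(1-\beta)\cdot s}{s^r} = \frac{(1-\alpha)(1-\beta)}{s^{r-1}},$$
and hence $\max \mathfrak{g}_{\alpha,\beta} = 1 - (1-\alpha)(1-\beta)/s^{r-1}$, which matches the definition \eqref{eq:addition} of $\alpha \oplus_r \beta$ after observing $(1-\alpha)(1-\beta) = 1-\alpha-\beta+\alpha\beta$.

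I do not expect any serious obstacle. The two subtle points are verifying strict (not merely weak) concavity when exactly one of $1-\alpha,1-\beta$ vanishes, and confirming that $x_{\alpha,\beta}$ lies in $[0,1]$. The first is handled by the endpoint case analysis of $\phi''$ above; the second is immediate, since $x_{\alpha,\beta}$ is visibly a convex combination of $0$ and $1$, landing in the interior whenever both $1-\alpha$ and $1-\beta$ are positive and at the appropriate endpoint otherwise.
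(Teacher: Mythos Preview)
Your argument is correct and follows essentially the same route as the paper: simplify via the binomial identity to $\mathfrak{g}_{\alpha,\beta}(x)=1-(1-\alpha)x^r-(1-\beta)(1-x)^r$, compute two derivatives to establish strict concavity on $(0,1)$ when $(\alpha,\beta)\neq(1,1)$, solve the first-order condition for $x_{\alpha,\beta}$, and evaluate. The only omission is the trivial case $(\alpha,\beta)=(1,1)$, where $\mathfrak{g}_{1,1}\equiv 1$ and $1\oplus_r 1=1$ by definition; you should mention it for completeness, but it poses no difficulty.
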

\begin{proof}
Note that for $x \in [0,1]$ we have that
\begin{align*}
\alpha x^r + \beta (1-x)^r + r!\sum_{i=1}^{r-1}\frac{x^i(1-x)^{r-i}}{i!(r-i)!} &= \alpha x^r + \beta (1-x)^r + (1-x^r - (1-x)^r)\\
&= 1 - (1-\alpha)x^r - (1-\beta)(1-x)^r.
\end{align*}
Hence $\mathfrak{g}_{\alpha, \beta}(x) = 1 - (1-\alpha)x^r - (1-\beta)(1-x)^r$. If $\alpha = 1$ and $\beta = 1$ then $\sup_{x \in [0,1]} g(x) = 1 = \alpha \oplus_r \beta$ by definition. Hence we may assume that $\alpha < 1$ or $\beta < 1$. Then
\begin{equation*}
\mathfrak{g}_{\alpha, \beta}'(x) = -r(1-\alpha)x^{r-1} + r(1-\beta)(1-x)^{r-1}
\end{equation*}
and
\begin{equation*}
\mathfrak{g}_{\alpha, \beta}''(x) = -r(r-1)(1-\alpha)x^{r-2} - r(r-1)(1-\beta)(1-x)^{r-2}.
\end{equation*}
Thus $\mathfrak{g}_{\alpha, \beta}'' < 0$ on $(0,1)$, showing that $\mathfrak{g}_{\alpha, \beta}$ is strictly concave. Furthermore $\mathfrak{g}_{\alpha, \beta}'(x) = 0$ has a unique solution
\begin{equation*}
x_{\alpha, \beta} = \frac{\sqrt[r-1]{1-\beta}}{\sqrt[r-1]{1-\alpha} + \sqrt[r-1]{1-\beta}}.
\end{equation*}
Hence $\mathfrak{g}_{\alpha, \beta}(x)$ has the global maximum
\begin{equation*}
\mathfrak{g}_{\alpha, \beta}(x_{\alpha, \beta}) = 1 - \frac{(1-\alpha)(1-\beta)}{(\sqrt[r-1]{1-\alpha} + \sqrt[r-1]{1- \beta})^{r-1}},
\end{equation*}
which is the same as $\alpha \oplus_r \beta$.
\end{proof}

\begin{lemma}
\label{lem:addition}
Let $G$ and $H$ be two $r$-graphs. Then
\begin{equation*}
\lambda(G \oplus_r H) = \lambda(G) \oplus_r \lambda(H).
\end{equation*}
\end{lemma}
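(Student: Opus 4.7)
The plan is to parameterize vectors in $\Delta_{n+m}$ (where $n = v(G)$, $m = v(H)$) by a triple $(X, \tilde{\mathbf{x}}, \tilde{\mathbf{y}})$ with $X \in [0,1]$, $\tilde{\mathbf{x}} \in \Delta_n$, $\tilde{\mathbf{y}} \in \Delta_m$, via $\mathbf{z} = (X\tilde{\mathbf{x}},\, (1-X)\tilde{\mathbf{y}})$, and to rewrite $p_{G \oplus_r H}(\mathbf{z})$ in the form $\mathfrak{g}_{p_G(\tilde{\mathbf{x}}),\, p_H(\tilde{\mathbf{y}})}(X)$. First I would split the edge set of $G \oplus_r H$ into three groups: the edges of $G$, the edges of $H$, and the ``crossing'' $r$-multisets that meet both $V(G)$ and $V(H)$. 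The first two groups contribute $X^r p_G(\tilde{\mathbf{x}})$ and $(1-X)^r p_H(\tilde{\mathbf{y}})$ to $p_{G \oplus_r H}(\mathbf{z})$ by the homogeneity of $p_G$ and $p_H$.

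For the crossing group, the crucial point is that $G \oplus_r H$ is a multigraph containing \emph{every} $r$-multiset on $V(G) \cup V(H)$ that touches both sides. By the multinomial identity (the same identity that underlies Lemma~\ref{lem:duality}), the sum of $r! \prod_i z_i^{D(i)}/D(i)!$ over all $r$-multisets $D$ on $[n+m]$ equals $(\sum_i z_i)^r = 1$, and the analogous sums over multisets contained entirely in $V(G)$ or entirely in $V(H)$ equal $X^r$ and $(1-X)^r$ respectively. Subtracting, the crossing group contributes $1 - X^r - (1-X)^r$. Putting the three pieces together gives
\begin{equation*}
p_{G \oplus_r H}(\mathbf{z}) = 1 - (1 - p_G(\tilde{\mathbf{x}}))X^r - (1 - p_H(\tilde{\mathbf{y}}))(1-X)^r = \mathfrak{g}_{p_G(\tilde{\mathbf{x}}),\, p_H(\tilde{\mathbf{y}})}(X),
\end{equation*}
which is the identity that does all the work.

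From here both inequalities fall out. For $\lambda(G \oplus_r H) \le \lambda(G) \oplus_r \lambda(H)$, take an optimal $\mathbf{z}$, extract $(X, \tilde{\mathbf{x}}, \tilde{\mathbf{y}})$, and use $p_G(\tilde{\mathbf{x}}) \le \lambda(G)$, $p_H(\tilde{\mathbf{y}}) \le \lambda(H)$ together with the fact that $\mathfrak{g}_{\alpha,\beta}(x)$ is nondecreasing in $\alpha$ and $\beta$ (the coefficients $X^r$ and $(1-X)^r$ are nonnegative) to obtain $p_{G \oplus_r H}(\mathbf{z}) \le \mathfrak{g}_{\lambda(G),\lambda(H)}(X) \le \sup_x \mathfrak{g}_{\lambda(G),\lambda(H)}(x) = \lambda(G) \oplus_r \lambda(H)$ by Lemma~\ref{lem:max2}. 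For the reverse inequality, take optimal $\tilde{\mathbf{x}}, \tilde{\mathbf{y}}$ for $G, H$ and set $X = x_{\lambda(G),\lambda(H)}$ from Lemma~\ref{lem:max2}; the resulting $\mathbf{z} \in \Delta_{n+m}$ certifies $\lambda(G \oplus_r H) \ge \mathfrak{g}_{\lambda(G),\lambda(H)}(x_{\lambda(G),\lambda(H)}) = \lambda(G) \oplus_r \lambda(H)$. The only tiny nuisance is the degenerate cases $X \in \{0,1\}$ when rescaling is undefined, which are handled directly since then $p_{G \oplus_r H}(\mathbf{z})$ reduces to $\lambda(H)$ or $\lambda(G)$, both of which are trivially at most $\lambda(G) \oplus_r \lambda(H)$. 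No step looks seriously delicate; the only place to be careful is bookkeeping for the crossing multisets, which is precisely what the multigraph definition of $\oplus_r$ is designed to streamline.
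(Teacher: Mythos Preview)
Your proof is correct and follows essentially the same approach as the paper: parameterize $\Delta_{n+m}$ by the split $(X,\tilde{\mathbf{x}},\tilde{\mathbf{y}})$, decompose $p_{G\oplus_r H}$ into the $G$-part, the $H$-part, and the crossing multisets, and reduce to the one-variable optimization handled by Lemma~\ref{lem:max2}. The paper's version is more compressed (it writes the crossing contribution directly as $r!\sum_{i=1}^{r-1}\frac{S_x^i(1-S_x)^{r-i}}{i!(r-i)!}$ and passes straight to the sup without spelling out the monotonicity in $\alpha,\beta$ or the boundary cases $X\in\{0,1\}$), but the argument is the same.
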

\begin{proof}
We shall assume that $G$ has vertex set $[n]$ and $H$ has vertex set $\{n+1, \ldots, n+m\}$. Then $G\oplus_r H$ has vertex set $[n+m]$.

Let $\mathbf{x} \in \Delta_{n+m}$ arbitrary. Set $S_x := \sum_{i=1}^n x_i$ and note that
\begin{align*}
p_{G \oplus_r H}(x_1, \ldots, x_{n+m}) =\ &p_G(\frac{x_1}{S_x}, \ldots, \frac{x_n}{S_x})S_x^r + 
p_H(\frac{x_{n+1}}{1-S_x}, \ldots, \frac{x_{n+m}}{1-S_x})(1-S_x)^r \\
&+ r!\sum_{i=1}^{r-1}\frac{S_x^i(1-S_x)^{r-i}}{i!(r-i)!}.
\end{align*}
Moreover $(\frac{a_1}{S_x}, \ldots, \frac{a_n}{S_x}) \in \Delta_n$ and $(\frac{a_{n+1}}{1-S_x}, \ldots, \frac{a_{n+m}}{1-S_x}) \in \Delta_m$. Hence
\begin{align*}
\lambda(G \oplus_r H) &= \sup_{x \in [0,1]} \left\{\lambda(G)x^r + \lambda(H)(1-x)^r + r!\sum_{i=1}^{r-1}\frac{x^i(1-x)^{r-i}}{i!(r-i)!}\right\}\\
&= \lambda(G) \oplus_r \lambda(H),
\end{align*}
by Lemma \ref{lem:max2}. This proves the lemma.
\end{proof}

We can now prove the following.

\begin{lemma}
\label{lem:local_inf}
$(\Pi_{\infty}^{(r)}, \oplus_r)$ is a commutative topological semigroup.
\end{lemma}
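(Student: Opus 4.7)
The plan is to verify commutativity, associativity, continuity, and closure of $\Pi_{\infty}^{(r)}$ under $\oplus_r$ one at a time. Each reduces either to a direct manipulation of the closed form \eqref{eq:addition} or to a lemma already established in the excerpt.

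Commutativity of $\oplus_r$ is immediate from the symmetric form of \eqref{eq:addition}. For associativity and continuity I would invoke the homeomorphism $\mathfrak{h} \colon [0,1] \to [1, +\infty]$ given by $\mathfrak{h}(x) = (1-x)^{-1/(r-1)}$ (with $\mathfrak{h}(1) := +\infty$), already used in the proof of Corollary \ref{cor:lebesgue}. A short algebraic manipulation of \eqref{eq:addition}, relying only on the factorisation $1-\alpha-\beta+\alpha\beta = (1-\alpha)(1-\beta)$, yields
\begin{equation*}
\mathfrak{h}(\alpha \oplus_r \beta) = \mathfrak{h}(\alpha) + \mathfrak{h}(\beta).
\end{equation*}
Since ordinary addition on $[1, +\infty]$ is associative and jointly continuous, the same holds for $\oplus_r$ on $[0,1]^2$. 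The one point requiring a separate check is continuity at $(1,1)$: writing $u := (1-\alpha)^{1/(r-1)}$ and $v := (1-\beta)^{1/(r-1)}$, one has $1 - (\alpha \oplus_r \beta) = (uv/(u+v))^{r-1} \le \min(u,v)^{r-1}$, which tends to $0$ as $(\alpha,\beta) \to (1,1)$.

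For closure under $\oplus_r$, given $\alpha, \beta \in \Pi_{\infty}^{(r)}$, Lemma \ref{lem:lambdaclos} supplies simple $r$-graphs $G_n, H_n$ with $\lambda(G_n) \to \alpha$ and $\lambda(H_n) \to \beta$. Lemma \ref{lem:addition} then gives $\lambda(G_n \oplus_r H_n) = \lambda(G_n) \oplus_r \lambda(H_n)$, and the continuity just established forces the left-hand side to converge to $\alpha \oplus_r \beta$. The subtlety is that $G_n \oplus_r H_n$ is in general a proper $r$-multigraph (because the definition adds every $r$-multiset straddling $V(G_n)$ and $V(H_n)$), so its Lagrangian need not lie in $\Lambda^{(r)}$; however Lemma \ref{lem:pattern} places it in $\Pi_{\infty}^{(r)}$. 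Since $\Pi_{\infty}^{(r)}$ is closed in $[0,1]$ by Pikhurko's theorem, the limit $\alpha \oplus_r \beta$ lies in $\Pi_{\infty}^{(r)}$ as well.

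The main obstacle is precisely this passage from simple graphs to multigraphs: even when the inputs $G_n, H_n$ are simple, the operation $G \oplus_r H$ produces repeated multiplicities on edges touching both parts, so one genuinely needs the multigraph version of the Infinity Principle (Lemma \ref{lem:pattern}) to certify that $\lambda(G_n \oplus_r H_n) \in \Pi_{\infty}^{(r)}$. Once this bridge is in place, everything else is bookkeeping with the explicit formula \eqref{eq:addition} and the closedness of $\Pi_{\infty}^{(r)}$.
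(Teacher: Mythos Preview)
Your proposal is correct and follows essentially the same route as the paper: the closure argument is identical (Lemmas \ref{lem:lambdaclos}, \ref{lem:addition}, \ref{lem:pattern}, plus closedness of $\Pi_{\infty}^{(r)}$), and you correctly flag the multigraph subtlety that forces the use of Lemma \ref{lem:pattern}. The only cosmetic difference is that you verify associativity and continuity via the conjugating homeomorphism $\mathfrak{h}$, whereas the paper simply declares these ``simple exercises'' and checks continuity at $(1,1)$ by a direct estimate; your version is a little cleaner but not a different idea.
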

\begin{proof}
Commutativity and associativity are simple exercises left to the reader. Continuity of $\oplus_r$ is clear everywhere except at $(1, 1)$.

Let $\{x_n\}_{n \geq 1}$ and $\{y_n\}_{n \geq 1}$ be arbitrary sequences of real numbers from $[0,1)$ converging to $1$. Then by definition $x_n \oplus_r y_n \leq 1$, and we want to show that equality holds in the limit. Let $\eps > 0$ be arbitrary and define $\delta := 2^{r-1}\eps$. Then there exists an $n_0 \geq 1$ such that $1 - x_n < \delta$ and $1 - y_n < \delta$. Let $n \geq n_0$ and assume w.l.o.g. that $x_n \geq y_n$. Then
\begin{equation*}
1 - \frac{(1-x_n)(1-y_n)}{(\sqrt[r-1]{1 - x_n} + \sqrt[r-1]{1 - y_n})^{r-1}} \geq 1 - \frac{1-y_n}{2^{r-1}} > 1 - \frac{\delta}{2^{r-1}} = 1 - \eps.
\end{equation*}
Hence $\oplus_r$ is continuous at $(1, 1)$ as well.

Thus we only need to prove that $\Pi_{\infty}^{(r)}$ is closed with respect to $\oplus_r$.

Let $\alpha, \beta \in \Pi_{\infty}^{(r)}$. By Lemma \ref{lem:lambdaclos}, we can choose a sequence of $r$-graphs $G_n$ with $\lambda(G_n) \rightarrow \alpha$, and a sequence of $r$-graphs $H_n$ with $\lambda(H_n) \rightarrow \beta$. 

Consider the sequence $G_n \oplus_r H_n$. By Lemma \ref{lem:addition} and continuity of $\oplus_r$,
\begin{align*}
\lim_{n \rightarrow \infty} \lambda(G_n \oplus_r H_n) &= 
\lim_{n \rightarrow \infty} \lambda(G_n) \oplus_r \lambda(H_n) \\
&= \alpha \oplus_r \beta.
\end{align*}
By Lemma \ref{lem:pattern}, $\lambda(G_n \oplus_r H_n) \in \Pi_{\infty}^{(r)}$. As this is a closed set, $\alpha \oplus_r \beta \in \Pi_{\infty}^{(r)}$, proving the lemma.
\end{proof}

We are now left to prove that $\Pi_{\fin}^{(r)}$ is closed under $\oplus_r$. This task will be substantially more difficult.

\subsection{\normalsize The $\lambda$ function}

For the remaining part of the proof we shall need a number of additional statements, which we gather in the following $3$ subsections.

We start with several observations on the Lagrangian function.

\begin{lemma}
\label{lem:lambdaprop}
For any $r \geq 2$ the following holds.
\renewcommand{\theenumi}{(\roman{enumi})}
\begin{enumerate}
\item If $H \subseteq G$ are $r$-graphs then $\lambda(H) \leq \lambda(G)$.
\item If $f: G \rightarrow H$ is a homomorphism of $r$-graphs then $\lambda(H) \geq \lambda(G)$.
\item If $G$ and $H$ are $r$-graphs then $\lambda(G \dot\cup H) = \max\{\lambda(G), \lambda(H)\}$.
\end{enumerate}
\begin{proof}
Statement (i) is clear.

We prove (ii). Assume $G$ has vertex set $[n]$ and $H$ has vertex set $[m]$. Let $\mathbf{a} \in \Delta_n$ be an optimal vector for $G$. Define $\mathbf{b} \in \Delta_m$ by setting $b_i = \sum_{j \in f^{-1}(i)} a_j$. As $f$ maps edges to edges, it follows that 
\begin{align*}
\lambda(H) \geq p_H(\mathbf{b}) &= r!\sum_{e' \in E(H)} \prod_{i \in e'} (\sum_{j \in f^{-1}(i)} a_j)\\
&\geq r!\sum_{e' \in E(H)} \sum_{e \in f^{-1}(e')} \prod_{j \in e} a_j = r!\sum_{e \in E(G)} \prod_{j \in e} a_{j} = p_G(\mathbf{a}) = \lambda(G).
\end{align*}
This proves (ii).

We prove (iii). Assume $G$ has vertex set $[n]$ and $H$ has vertex set $\{n+1, \ldots, n+m\}$. For any $\mathbf{x} \in \Delta_{n+m}$ let $S_x := \sum_{i=1}^n x_i$. Then
\begin{align*}
p_{G \dot\cup H}(\mathbf{x}) &= S_x^r p_G(\frac{x_1}{S_x}, \ldots, \frac{x_n}{S_x}) + (1-S_x)^rp_H(\frac{x_{n+1}}{1-S_x}, \ldots, \frac{x_{n+m}}{1-S_x})\\
&= S_x^r \lambda(G) + (1-S_x)^r \lambda(H).
\end{align*}
Consider the function $f:[0,1] \rightarrow [0,1]$ given by $f(x) = x^r\lambda(G) + (1-x)^r\lambda(H)$. Then the second derivative $f''(x) \geq 0$, hence $f$ is convex. So the maximum is achieved at one of the endpoints of the interval. This implies that $\lambda(G \dot\cup H) = \max\{\lambda(G), \lambda(H)\}$, proving (iii). 
\end{proof}
\end{lemma}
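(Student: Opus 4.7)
The plan is to handle the three parts independently, each by exhibiting a well-chosen test vector in the appropriate simplex and manipulating $p_G$ directly.

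For part (i), I would take an optimal vector $\mathbf{a} \in \Delta_{v(H)}$ for $H$ and extend it to $V(G)$ by assigning weight $0$ to every vertex in $V(G) \setminus V(H)$. Since every edge of $H$ is also an edge of $G$, and since the remaining terms of $p_G$ at this extended vector are products of non-negative numbers, one gets $\lambda(G) \geq p_G(\mathbf{a},0,\ldots,0) \geq p_H(\mathbf{a}) = \lambda(H)$.

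For part (ii), the natural move is to push an optimal vector for $G$ forward along $f$. Starting from an optimal $\mathbf{a} \in \Delta_n$ for $G$, define $\mathbf{b} \in \Delta_m$ by $b_i := \sum_{j \in f^{-1}(i)} a_j$; the $b_i$ are non-negative and sum to $1$. For each edge $e' \in E(H)$, expanding the product $\prod_{i \in e'} b_i$ yields a sum, over all choice functions $\phi$ assigning some $\phi(i) \in f^{-1}(i)$ to each $i \in e'$, of products $\prod_{i \in e'} a_{\phi(i)}$. The key observation is that every edge $e \in E(G)$ maps to some $e' \in E(H)$ (because $f$ is a homomorphism), and because $|e|=|e'|=r$, the map $f\colon e \to e'$ is a bijection; its inverse is one of the choice functions $\phi$ above, and it contributes exactly the term $\prod_{j \in e} a_j$. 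Summing over edges $e' \in E(H)$ and dropping the remaining non-negative terms gives $p_H(\mathbf{b}) \geq p_G(\mathbf{a}) = \lambda(G)$.

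For part (iii), my plan is to parametrize the optimization by the total weight placed on $V(G)$. For any $\mathbf{x} \in \Delta_{n+m}$ with $S := \sum_{i \in V(G)} x_i$, a direct computation gives
\begin{equation*}
p_{G \dot\cup H}(\mathbf{x}) = S^r\, p_G(\tilde{\mathbf{a}}) + (1-S)^r\, p_H(\tilde{\mathbf{b}}),
\end{equation*}
where $\tilde{\mathbf{a}}, \tilde{\mathbf{b}}$ are the normalized restrictions; there are no cross terms because the disjoint union has no edges crossing the two parts. Optimizing the inner factors independently reduces the problem to maximizing the single-variable function $S^r \lambda(G) + (1-S)^r \lambda(H)$ over $S \in [0,1]$. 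Its second derivative equals $r(r-1)[S^{r-2}\lambda(G) + (1-S)^{r-2}\lambda(H)] \geq 0$, so the function is convex and attains its maximum at an endpoint, yielding $\max\{\lambda(G), \lambda(H)\}$.

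I expect part (ii) to be the main obstacle: the naive expansion of $\prod_{i \in e'}\sum_{j \in f^{-1}(i)} a_j$ produces many cross terms, and one must simultaneously justify (a) that these cross terms are non-negative so they may be dropped, and (b) that each edge of $G$ contributes exactly one expansion term, with no overcounting across edges of $H$. Parts (i) and (iii) are by comparison essentially bookkeeping, the only substantive input for (iii) being the elementary convexity of $x \mapsto x^r$ for $r \geq 2$.
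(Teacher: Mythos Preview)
Your proposal is correct and follows essentially the same approach as the paper: part (i) is the obvious zero-extension (the paper simply says ``clear''), part (ii) is the push-forward $b_i=\sum_{j\in f^{-1}(i)}a_j$ followed by expanding the product and dropping non-negative cross terms, and part (iii) is the parametrization by total mass on $V(G)$ together with convexity of $x\mapsto x^r\lambda(G)+(1-x)^r\lambda(H)$. Your write-up of (ii) is in fact more careful than the paper's about why each edge of $G$ contributes exactly one term with no overcounting, and in (iii) you correctly phrase the inner step as an optimization rather than an equality.
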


\subsection{\normalsize The $\pi$ function}

We gather in this subsection several results about the $\pi$ function.

\begin{theorem}[Theorem 2, \cite{Brown84}]
\label{thm:supers}
For any $\eps > 0$ and any family $\mathcal{F}$ of $r$-graphs, there are $\delta > 0$ and $n_S$ such that the following holds. Any $r$-graph $G$ on $n \geq n_S$ vertices and with more than $(\pi(\mathcal{F})+\eps)\binom{n}{r}$ edges contains at least $\delta n^{v(F)}$ copies of some $F \in \mathcal{F}$.
\end{theorem}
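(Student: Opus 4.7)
The plan is the classical averaging/sampling argument, with care about the possibly infinite family $\mathcal{F}$. First, using the definition of $\pi(\mathcal{F})$, I would choose $m$ large enough so that $\ex(m,\mathcal{F}) \leq (\pi(\mathcal{F})+\eps/2)\binom{m}{r}$. Note that the family $\mathcal{F}_{\leq m} := \{F\in\mathcal{F} : v(F)\leq m\}$ is finite (up to isomorphism), because there are only finitely many $r$-graphs on at most $m$ vertices; every $m$-vertex subgraph of any $r$-graph that fails to be $\mathcal{F}$-free is in fact not $\mathcal{F}_{\leq m}$-free.

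Next, for $n$ much larger than $m$, I consider a uniformly random $m$-subset $S\subseteq V(G)$. A standard double count gives
\begin{equation*}
\mathbb{E}\bigl[e(G[S])\bigr] \;=\; e(G)\,\frac{\binom{n-r}{m-r}}{\binom{n}{m}} \;=\; e(G)\,\frac{\binom{m}{r}}{\binom{n}{r}} \;\geq\; (\pi(\mathcal{F})+\eps)\binom{m}{r}.
\end{equation*}
Since $e(G[S])\leq \binom{m}{r}$ always, a simple Markov-type estimate shows that the fraction of $m$-subsets $S$ with $e(G[S]) > (\pi(\mathcal{F})+\eps/2)\binom{m}{r}$ is at least $\eps/2$. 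Each such subset exceeds $\ex(m,\mathcal{F})$ and therefore contains a copy of some $F\in \mathcal{F}_{\leq m}$.

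Then I apply pigeonhole over the finite family $\mathcal{F}_{\leq m}$: there exists a single $F_0\in\mathcal{F}_{\leq m}$ such that at least $\frac{\eps}{2|\mathcal{F}_{\leq m}|}\binom{n}{m}$ of the $m$-subsets contain a copy of $F_0$. Counting pairs $(S,\phi)$ where $\phi$ is an embedding of $F_0$ into $G[S]$ and dividing by the number $\binom{n-v(F_0)}{m-v(F_0)}$ of $m$-subsets extending a fixed embedding, I obtain at least
\begin{equation*}
\frac{\eps}{2|\mathcal{F}_{\leq m}|}\cdot \frac{\binom{n}{m}}{\binom{n-v(F_0)}{m-v(F_0)}} \;=\; \frac{\eps}{2|\mathcal{F}_{\leq m}|}\cdot \frac{\binom{n}{v(F_0)}}{\binom{m}{v(F_0)}}
\end{equation*}
embeddings of $F_0$ in $G$, which is at least $\delta n^{v(F_0)}$ for a suitable $\delta=\delta(\eps,\mathcal{F},m)>0$ and all $n\geq n_S$.

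The only subtle point is the infinite case: one must truncate $\mathcal{F}$ to $\mathcal{F}_{\leq m}$ before applying pigeonhole, and verify that the truncated family has the same relevant extremal number up to lower-order terms (which holds by the choice of $m$, since an $\mathcal{F}_{\leq m}$-free $m$-vertex graph is automatically $\mathcal{F}$-free on $m$ vertices). Everything else is a routine averaging argument, so this is the only place that deserves attention.
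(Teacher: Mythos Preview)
The paper does not prove this statement; it is quoted as Theorem~2 of Brown and Simonovits \cite{Brown84} and used as a black box throughout Section~5. Your argument is the standard averaging/supersaturation proof and is correct: the truncation to $\mathcal{F}_{\leq m}$ correctly handles the possibly infinite family, the Markov-type estimate is valid since $e(G[S])\le\binom{m}{r}$, and the final $\delta$ you extract depends only on $\eps$ and $m$ (hence only on $\eps$ and $\mathcal{F}$), not on $G$ or on which $F_0$ the pigeonhole selects, as the statement requires.
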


Theorem \ref{thm:supers} is a generalization of the supersaturation theorem of Erd\H os and Simonovits. It has the following consequence (see the proof of Theorem $2.2$ in \cite{keevash11}).

\begin{lemma}
\label{lem:blowup}
For any $t \geq 1$, $\eta > 0$ and any $r$-graph $F$ there exist $\rho > 0$ and $n_L$ such that the following holds. If $G$ is an $r$-graph on $n \geq n_L$ vertices containing at least $\eta n^{v(F)}$ copies of $F$, then $G$ contains at least $\rho n^{v(F)t}$ copies of $F(t)$.
\end{lemma}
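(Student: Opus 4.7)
The plan is to combine a random equipartition argument with the hypergraph Kővári–Sós–Turán inequality (often called the Erdős box theorem). Write $v := v(F)$ and $e := |E(F)|$ throughout.

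\textbf{Step 1: Partite reduction.} I would first equipartition $V(G) = W_1 \dot\cup \cdots \dot\cup W_v$ uniformly at random. A given labelled copy of $F$ is \emph{canonical} (has its $i$-th vertex in $W_i$) with probability bounded below by a constant depending only on $v$, so by linearity of expectation there is an equipartition producing at least $\eta_1 n^v$ canonical copies of $F$, with $\eta_1 = \eta_1(\eta, v) > 0$.

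\textbf{Step 2: Edge-wise box estimate.} For each edge $e = \{i_1, \ldots, i_r\}$ of $F$ let $G_e$ be the $r$-partite $r$-graph on parts $W_{i_1}, \ldots, W_{i_r}$ induced from $G$. Every canonical copy of $F$ contributes one edge to each $G_e$, so an averaging argument forces each $G_e$ to have density at least some $\eta_2 > 0$. The hypergraph Kővári–Sós–Turán (Erdős box) theorem then guarantees that each $G_e$ contains $\Omega\!\left((n/v)^{rt}\right)$ copies of the complete $r$-partite $r$-graph $K_{t,\ldots,t}$ on parts of size $t$.

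\textbf{Step 3: Simultaneous assembly.} A canonical copy of $F(t)$ in $G$ is a tuple $(U_1, \ldots, U_v)$ with $U_i \subseteq W_i$, $|U_i| = t$, such that for every $e \in E(F)$ the tuple $(U_{i_1}, \ldots, U_{i_r})$ spans a $K_{t,\ldots,t}$ inside $G_e$. I would write the number of admissible tuples as
$$\sum_{(U_1, \ldots, U_v)} \prod_{e \in E(F)} \mathbf{1}\!\left[(U_{i_1}, \ldots, U_{i_r}) \text{ spans } K_{t,\ldots,t} \text{ in } G_e\right],$$
and lower-bound it by an iterated Hölder / Loomis–Whitney estimate applied coordinate by coordinate to the $U_i$'s, each application consuming one of the single-edge box estimates from Step 2. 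This produces the desired bound $\rho n^{vt}$ with $\rho = \rho(\eta, t, F) > 0$.

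\textbf{Main obstacle.} The technical heart is Step 3. Because the constraints imposed by distinct edges of $F$ share the same sets $U_i$, they are strongly correlated and one cannot simply multiply single-edge probabilities. The careful ordering of the Hölder applications, so that each edge-wise box estimate is used exactly once and no coordinate is double-counted, is where the argument has to be delicate; this is precisely the calculation carried out in the proof of Theorem 2.2 of \cite{keevash11}.
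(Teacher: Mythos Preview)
The paper does not give its own proof here; it simply points to Theorem~2.2 of \cite{keevash11}. The argument there proceeds differently from your Steps~2--3: after your Step~1, one regards each canonical copy of $F$ as a single edge of an auxiliary $v$-partite $v$-uniform hypergraph $H$ on parts $W_1,\ldots,W_v$. By hypothesis $H$ has at least $\eta_1 n^v$ edges, and one applies the counting K\H{o}v\'ari--S\'os--Tur\'an (Erd\H{o}s box) inequality \emph{once}, to $H$ itself, obtaining $\rho n^{vt}$ copies of the complete $v$-partite $v$-graph $K^{(v)}_{t,\ldots,t}$ inside $H$. Each such copy is a tuple $(U_1,\ldots,U_v)$ with $U_i\subset W_i$, $|U_i|=t$, such that \emph{every} transversal is a canonical $F$-copy in $G$; in particular it is a canonical copy of $F(t)$, and the lemma follows.

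Your route has a real gap. In Step~2 you pass from ``$\eta_1 n^v$ canonical copies of $F$'' to ``each $G_e$ has density at least $\eta_2$'', and Step~3 then tries to assemble $F(t)$ from the edge-wise box counts alone. But that marginal information is too weak. Take $r=2$ and $F$ the path $1\text{--}2\text{--}3$; split $W_2=W_2'\cup W_2''$ into halves, let $G_{12}$ be complete on $W_1\times W_2'$ and $G_{23}$ complete on $W_2''\times W_3$. Both $G_e$'s have density $1/2$ and contain $\Omega(n^{2t})$ copies of $K_{t,t}$, yet there is not a single tuple $(U_1,U_2,U_3)$ meeting both constraints simultaneously, so no H\"older or Loomis--Whitney manipulation of the Step~2 outputs can produce a positive lower bound. (In this configuration there are no canonical paths either --- which is exactly the point: Step~2 discards the joint information that distinguishes this situation from one with many canonical copies.) The fix is not to decouple the edges of $F$ at all: keep the $v$-graph of canonical copies intact and apply the box inequality to it directly.
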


\begin{lemma}
\label{lem:remset}
For any $\eta > 0$, $a \geq 1$ and $k \geq 1$ there is an $n_C$ such that the following holds. If $G$ is an $r$-graph on $n \geq n_C$ vertices containing at least $\eta n^{v(F)}$ copies of some $r$-graph $F$ on at most $k$ vertices and $A \subset V(G)$ is a set of size $a$, then $G \setminus A$ contains at least $\frac{\eta}{2}n^{v(F)}$ copies of $F$.
\end{lemma}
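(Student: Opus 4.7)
The proof will be a direct counting argument. The plan is to estimate the number of copies of $F$ in $G$ that meet $A$, and absorb this into the gap between $\eta n^{v(F)}$ and $\frac{\eta}{2} n^{v(F)}$ when $n$ is large compared to $a$, $k$ and $1/\eta$.

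Concretely, I would bound the number of copies of $F$ in $G$ containing at least one vertex of $A$ as follows. Any such copy is determined by picking a vertex $u \in A$ (at most $a$ choices), deciding which vertex of $F$ is mapped to $u$ (at most $v(F) \le k$ choices), and then placing the remaining $v(F)-1$ vertices of the copy among the $n$ vertices of $G$ (at most $n^{v(F)-1}$ choices). This yields at most $a k n^{v(F)-1}$ copies of $F$ meeting $A$.

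Setting $n_C := \lceil 2ak/\eta \rceil$, for every $n \ge n_C$ the above quantity is at most $\frac{\eta}{2} n^{v(F)}$. Subtracting from the hypothesised $\eta n^{v(F)}$ total leaves at least $\frac{\eta}{2} n^{v(F)}$ copies of $F$ disjoint from $A$, i.e.\ contained in $G \setminus A$, as required.

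No real obstacle is expected; the argument is a one-line polynomial-savings observation, the point being that pinning down one vertex to lie in $A$ drops the exponent of $n$ by one. The only care needed is to ensure the threshold $n_C$ is uniform in $F$, which is guaranteed by the hypothesis $v(F) \leq k$ (so replacing $v(F)$ by $k$ in the intermediate bound is harmless).
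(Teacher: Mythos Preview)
Your proposal is correct and matches the paper's proof essentially line for line: the paper also sets $n_C = \frac{2ak}{\eta}$, bounds the number of copies of $F$ meeting $A$ by $v(F)\,a\,n^{v(F)-1} \le kan^{v(F)-1}$, and subtracts this from $\eta n^{v(F)}$.
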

\begin{proof}
Set $n_C := \frac{2ak}{\eta}$. We show that the claim holds for $n_C$.

The number of copies of $F$ intersecting $A$ is at most $v(F)an^{v(F)-1} \leq kan^{v(F)-1}$. Hence the number of copies of $F$ disjoint from $A$ is at least
\begin{equation*}
\eta n^{v(F)} - kan^{v(F)-1} \geq \frac{\eta n^{v(F)}}{2},
\end{equation*}
which holds by our choice of $n_C$.
\end{proof}

The $\pi$ function has several properties, which we list below.

\begin{lemma}
\label{lem:piprop}
For any $r \geq 2$ the following holds.
\renewcommand{\theenumi}{(\roman{enumi})}
\begin{enumerate}
\item If $\mathcal{F}$ is a family of $r$-graphs and $\mathcal{F} \subseteq \mathcal{F}'$ then $\pi(\mathcal{F}') \leq \pi(\mathcal{F})$.
\item If $H \subseteq G$ are two $r$-graphs and $\mathcal{F}$ is a family of $r$-graphs then $\pi(\mathcal{F} \cup \{H\}) \leq \pi(\mathcal{F} \cup \{G\})$.
\item If $G$ and $H$ are $r$-graphs and $\mathcal{F}$ is a family of $r$-graphs then
\begin{equation}
\label{eq:disjunion}
\pi(\mathcal{F} \cup \{G \dot\cup H\}) = \max\{ \pi(\mathcal{F} \cup \{H\}), \pi(\mathcal{F} \cup \{G\})\}.
\end{equation}
\item If $F$ is any $r$-graph, $t \geq 1$ and $\mathcal{F}$ is a family of $r$-graphs then $\pi(\mathcal{F} \cup \{F\}) = \pi(\mathcal{F} \cup \{F(t)\})$.
\item If $\mathcal{F}$ is a family of $r$-graphs then $\pi(\mathcal{F}) = 1$ if and only if $\mathcal{F}$ is empty.
\end{enumerate}
\end{lemma}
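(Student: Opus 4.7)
The first two parts are immediate from the definitions. For (i), any $\mathcal{F}'$-free $r$-graph is automatically $\mathcal{F}$-free (since $\mathcal{F} \subseteq \mathcal{F}'$), so $\ex(n,\mathcal{F}') \le \ex(n,\mathcal{F})$ for every $n$, and dividing by $\binom{n}{r}$ and taking limits yields $\pi(\mathcal{F}') \le \pi(\mathcal{F})$. For (ii), the observation is that whenever $H \subseteq G$, an $H$-free $r$-graph cannot contain a copy of $G$ either (the contrapositive of ``every copy of $G$ contains a copy of $H$''), so every $(\mathcal{F} \cup \{H\})$-free graph is $(\mathcal{F} \cup \{G\})$-free and the same counting argument applies.

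For (iii), the inequality ``$\ge$'' is immediate from (ii) applied twice, since both $G$ and $H$ embed into $G \dot\cup H$. For the reverse direction ``$\le$'', suppose for contradiction that for some $\eps > 0$ there exist arbitrarily large $(\mathcal{F} \cup \{G \dot\cup H\})$-free $r$-graphs $G_n$ of density greater than $\max\{\pi(\mathcal{F} \cup \{G\}), \pi(\mathcal{F} \cup \{H\})\} + \eps$. Apply Theorem \ref{thm:supers} to the family $\mathcal{F} \cup \{G\}$: since $G_n$ is in particular $\mathcal{F}$-free, the supersaturation must come from $G$, yielding $\delta_G n^{v(G)}$ copies of $G$ inside $G_n$. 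Symmetrically, $G_n$ contains at least $\delta_H n^{v(H)}$ copies of $H$. Now fix any copy $A$ of $G$ in $G_n$; by Lemma \ref{lem:remset} applied with this $v(G)$-set, $G_n \setminus A$ still contains at least one copy $B$ of $H$. Since $A$ and $B$ are disjoint, $A \cup B$ is a copy of $G \dot\cup H$ in $G_n$, contradicting its freeness.

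Part (iv) proceeds analogously. Since $F$ embeds into $F(t)$, (ii) gives $\pi(\mathcal{F} \cup \{F\}) \le \pi(\mathcal{F} \cup \{F(t)\})$. For the reverse, if a large $(\mathcal{F} \cup \{F(t)\})$-free $r$-graph $G_n$ has density exceeding $\pi(\mathcal{F} \cup \{F\}) + \eps$, then Theorem \ref{thm:supers} produces $\eta n^{v(F)}$ copies of $F$ in $G_n$ (again using $\mathcal{F}$-freeness to rule out the other members of the forbidden family), and Lemma \ref{lem:blowup} upgrades these to $\rho n^{v(F)t} \ge 1$ copies of $F(t)$, a contradiction. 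Finally, for (v), the forward direction is trivial: if $\mathcal{F} = \emptyset$ then every $r$-graph is vacuously $\mathcal{F}$-free and $\pi(\emptyset) = 1$. Conversely, pick $F \in \mathcal{F}$ with $k := v(F)$ and note that $F \subseteq K_k^{r}$; a standard double-counting argument (each of the $\binom{n}{k}$ $k$-subsets of a $K_k^r$-free $r$-graph must miss some $r$-edge, and each non-edge lies in exactly $\binom{n-r}{k-r}$ such $k$-subsets) gives $\pi(\{K_k^{r}\}) \le 1 - 1/\binom{k}{r}$, so combining with (i) and (ii) one obtains $\pi(\mathcal{F}) \le \pi(\{F\}) \le \pi(\{K_k^{r}\}) < 1$. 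The only genuine obstacle in the entire lemma is the disjoint-embedding step in (iii), which is precisely where the combination of supersaturation and Lemma \ref{lem:remset} is needed to produce vertex-disjoint copies of $G$ and $H$; the remaining parts are soft monotonicity arguments or the classical supersaturation-plus-blow-up trick.
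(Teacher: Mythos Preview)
Your proof is correct and, for parts (i)--(iv), follows essentially the same route as the paper: monotonicity for (i)--(ii), and supersaturation (Theorem~\ref{thm:supers}) combined with Lemma~\ref{lem:remset} for (iii) and with Lemma~\ref{lem:blowup} for (iv). Your treatment of (iii) is slightly more symmetric than the paper's---you apply supersaturation to both $\mathcal{F}\cup\{G\}$ and $\mathcal{F}\cup\{H\}$, whereas the paper obtains a single copy of $H$ just from $e(G')>\ex(n,\mathcal{F}\cup\{H\})$ and then supersaturates only for $G$---but the mechanism is identical.

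The one genuine difference is part (v). The paper quotes Sidorenko's inequality $\pi(F)\le (f-2)/(f-1)$ for an $r$-graph $F$ with $f\ge 2$ edges, while you instead bound $\pi(\{F\})\le \pi(\{K_k^r\})\le 1-1/\binom{k}{r}$ via the standard double count on $k$-subsets and missing edges. Your argument is more self-contained (no external citation needed) and arguably cleaner here; the paper's choice has the mild advantage of not needing to separately dispose of the degenerate case $k<r$, where $\binom{k}{r}=0$ and your bound is formally undefined (though of course $\pi(\{F\})=0$ trivially then). Either way, the conclusion $\pi(\mathcal{F})<1$ for nonempty $\mathcal{F}$ follows.
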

\begin{proof}
The statements (i) and (ii) are clear.

We prove (iii). Assume w.l.o.g. that $\pi(\mathcal{F} \cup \{G\}) \geq \pi(\mathcal{F} \cup \{H\})$. By (ii) we have that
\begin{equation*}
\pi(\mathcal{F} \cup \{G \dot\cup H\}) \geq \pi(\mathcal{F} \cup \{G\}).
\end{equation*}
Assume for a contradiction that this inequality is strict. Then there exists $\eps > 0$ such that $\pi(\mathcal{F} \cup \{G \dot\cup H\}) > \pi(\mathcal{F} \cup \{G\}) + \eps$. Let $\delta$ and $n_S$ be given by Theorem \ref{thm:supers} on input $\eps$ and $\mathcal{F} \cup \{G\}$. Let $n_C$ be given by Lemma \ref{lem:remset} on input $\delta$, $a := v(H)$ and $k := v(G)$. Furthermore, let $n_1$ be large enough so that for any $n \geq n_1, \ex(n, \mathcal{F} \cup \{G \dot\cup H\}) > (\pi(\mathcal{F} \cup \{G\}) + \eps)\binom{n}{r} > \ex(n, \mathcal{F} \cup \{H\})$.

Now let $n \geq \max\{n_S, n_C, n_1, \frac{2}{\delta}\}$. By assumption there must exist an $r$-graph $G'$ on $n$ vertices with $e(G') > (\pi(\mathcal{F} \cup \{G\}) + \eps)\binom{n}{r}$, which is $(\mathcal{F} \cup \{G \dot\cup H\})$-free. As $n \geq n_1$, there must exist a copy of $H$ in $G'$, which we now fix, and also denote by $H$. However, by our choice of $n$ and Theorem \ref{thm:supers}, there are at least $\delta n^{v(G)}$ copies of $G$ in $G'$. Consequently by Lemma \ref{lem:remset}, at least $\frac{\delta}{2}n^{v(G)}$ of them are disjoint from $H$, in particular we can find a copy of $G \dot\cup H$ in $G'$, a contradiction. This proves (iii).

We prove (iv). By (ii) we have that $\pi(\mathcal{F} \cup \{F\}) \leq \pi(\mathcal{F} \cup \{F(t)\})$. Assume for a contradiction that 
this inequality is strict. Then there exists $\eps > 0$ such that $\pi(\mathcal{F} \cup \{F(t)\}) > \pi(\mathcal{F} \cup \{F\}) + \eps$. 
Let $\delta$ and $n_S$ be given by Theorem \ref{thm:supers} on input $\eps$ and $\mathcal{F} \cup \{F\}$. 
Let $\rho$ and $n_L$ be given by Lemma \ref{lem:blowup} on input $t$, $\delta$ and $F$. Furthermore, let $n_1$ be large enough so that for any $n \geq n_1, \ex(n, \mathcal{F} \cup \{F(t)\}) > (\pi(\mathcal{F} \cup \{F\}) + \eps)\binom{n}{r}$.

Now let $n \geq \max\{n_S, n_L, n_1, \frac{1}{\rho}\}$. By assumption there must exist an $r$-graph $G$ on $n$ vertices with $e(G) > (\pi(\mathcal{F} \cup \{F\}) + \eps)\binom{n}{r}$, which is $(\mathcal{F} \cup \{F(t)\})$-free. By Theorem \ref{thm:supers}, there are at least $\delta n^{v(F)}$ copies 
of $F$ in $G$. Consequently by Lemma \ref{lem:blowup}, there is at least one copy of $F(t)$ in $G$, a contradiction. This proves (iv).

We prove (v). Clearly $\pi(\emptyset) = 1$, so assume $\mathcal{F}$ is a non-empty family of $r$-graphs. Let $F \in \mathcal{F}$. Then $\pi(\mathcal{F}) \leq \pi(F)$ by (i), and we claim that $\pi(F) < 1$. This intuitive claim can be proved in several ways, for example by using the following inequality of Sidorenko \cite{Sidorenko89}: if $F$ is an $r$-graph with $f \geq 2$ edges then $\pi(F) \leq \frac{f-2}{f-1}$. This proves (v).
\end{proof}

Finally, the following two lemmas will provide a better understanding of the structure of extremal $r$-graphs.

\begin{lemma}
\label{lem:min_degree}
Let $ r \geq 2$ and $\mathcal{F}$ be a family of $r$-graphs weakly closed under homomorphisms. Set $\alpha := \pi(\mathcal{F})$. For any $\delta > 0$ there exists an $n_D \geq 1$ such that any maximum $\mathcal{F}$-free $r$-graph on $n \geq n_D$ vertices has minimum degree at least $(\alpha - \delta)\binom{n-1}{r-1}$.
\end{lemma}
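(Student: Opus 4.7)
The plan is to use a symmetrisation (vertex cloning) argument. Suppose for contradiction that there is a maximum $\mathcal{F}$-free $r$-graph $G$ on $n$ vertices whose minimum degree is attained at some vertex $v$ with $d_G(v) < (\alpha - \delta)\binom{n-1}{r-1}$. Since $G$ is extremal we have $e(G) = \ex(n, \mathcal{F}) \geq (\alpha - \delta/2)\binom{n}{r}$ for all sufficiently large $n$, and so the average degree in $G$ is at least $(\alpha - \delta/2)\binom{n-1}{r-1}$. In particular, there exists a vertex $u$ with $d_G(u) \geq (\alpha - \delta/2)\binom{n-1}{r-1}$.

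The construction is standard: delete $v$ and insert a new vertex $v'$ which is a \emph{twin} of $u$, i.e.\ let $G'$ have vertex set $(V(G)\setminus\{v\}) \cup \{v'\}$ and edge set $E(G-v) \cup \{\,e - u + v' : e \in E(G-v),\ u \in e\,\}$. Note that no edge of $G'$ contains both $u$ and $v'$, so the map $f\colon V(G') \to V(G)$ defined by $f(v') = u$ and $f(x) = x$ otherwise is a well-defined homomorphism from the $r$-graph $G'$ into $G$. An easy count gives
\begin{equation*}
e(G') = e(G) - d_G(v) + d_{G-v}(u) \geq e(G) + \tfrac{\delta}{2}\binom{n-1}{r-1} - \binom{n-2}{r-2},
\end{equation*}
using that at most $\binom{n-2}{r-2}$ edges of $G$ contain both $u$ and $v$. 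For $n_D$ large enough this last quantity is strictly greater than $e(G)$.

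It remains to show $G'$ is $\mathcal{F}$-free, which is exactly where the weak closure hypothesis enters. If some $F \in \mathcal{F}$ embedded in $G'$, then $f$ restricted to (a copy of) $F$ would be a surjective homomorphism onto its image $F' \subseteq G$, so $F' \in \overline{\mathcal{F}}$; weak closure under homomorphisms then furnishes some $F'' \in \mathcal{F}$ with $F'' \subseteq F' \subseteq G$, contradicting $\mathcal{F}$-freeness of $G$. Hence $G'$ is a $\mathcal{F}$-free $r$-graph on $n$ vertices with strictly more edges than $G$, contradicting the maximality of $G$. The only delicate point is verifying that the cloning does not inadvertently create forbidden subgraphs; this is precisely handled by the weak closure assumption, and is where a purely ``subgraph-closed'' hypothesis would be insufficient.
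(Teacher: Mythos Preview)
Your proof is correct and follows essentially the same vertex-cloning argument as the paper: pick a high-degree vertex $u$, replace the low-degree vertex $v$ by a twin of $u$, and use weak closure under homomorphisms to conclude the new graph is still $\mathcal{F}$-free while having strictly more edges. Your write-up is in fact more explicit than the paper's about why the homomorphism argument works (the paper simply asserts ``As $\mathcal{F}$ is weakly closed under homomorphisms, $G'$ is still $\mathcal{F}$-free''), and your edge count matches the paper's computation line for line.
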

\begin{proof}
Choose $n_D \geq 1$ so that for any $n \geq n_D$ we have $\ex(n, \mathcal{F}) \geq (\alpha - \frac{\delta}{2})\binom{n}{r}$, and furthermore $n_D > 1 + \frac{2(r-1)}{\delta}$.

Let $G$ be any maximum $\mathcal{F}$-free $r$-graph on $n \geq n_D$ vertices, and assume for a contradiction that $G$ contains a vertex $x$ of degree $d(x) < (\alpha - \delta)\binom{n-1}{r-1}$.

As $e(G) = \sum_{x \in V(G)} \frac{d(x)}{r}$, there must exist a vertex $v \in V(G)$ of degree $d(v) \geq (\alpha - \frac{\delta}{2})\binom{n-1}{r-1}$. Then replace $x$ by a new vertex $v'$ and add all edges $\{v'\} \cup e$, where $x \notin e$ and $\{v\} \cup e \in E(G)$. In other words, we replace $x$ by a copy of $v$, but we duplicate only the edges incident with $v$ and not with $x$. Let $G'$ be the resulting $r$-graph. As $\mathcal{F}$ is weakly closed under homomorphisms, $G'$ is still $\mathcal{F}$-free.

However,
\begin{align*}
e(G') - e(G) &\geq (\alpha - \frac{\delta}{2})\binom{n-1}{r-1} - (\alpha - \delta)\binom{n-1}{r-1} - \binom{n-2}{r-2}\\
&=\left(\frac{\delta(n-1)}{2(r-1)} - 1\right)\binom{n-2}{r-2}\\
&> 0,
\end{align*}
as $n \geq n_D$. This contradicts the maximality of $G$.
\end{proof}
One can strengthen the proof of Lemma \ref{lem:min_degree} to show that in a maximum $\mathcal{F}$-free $r$-graph all degrees are roughly the same $(\alpha + o(1))\binom{n-1}{r-1}$. We shall not need this stronger statement, but rather a variation of it.
\begin{lemma}
\label{lem:add_vertex}
Let $r \geq 2$ and $\mathcal{F}$ be a family of $r$-graphs weakly closed under homomorphisms. Set $\alpha := \pi(\mathcal{F})$. For any $\eps > 0$, there exist a $\tau > 0$ and an $n_V \geq 1$ such that the following holds. If $G$ is any $r$-graph on $n \geq n_V$ vertices, density at least $\alpha - \tau$ and having a vertex $v$ of degree at least $(\alpha + \eps)\binom{n-1}{r-1}$, then $G$ is not $\mathcal{F}$-free.  
\end{lemma}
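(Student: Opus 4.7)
The plan is to argue by contradiction. I would assume $G$ is $\mathcal{F}$-free, blow up the vertex $v$ by many additional copies to produce an $r$-graph $G'$ which is still $\mathcal{F}$-free by the weak homomorphism closure of $\mathcal{F}$, and show that the density of $G'$ exceeds $\alpha$, contradicting $\pi(\mathcal{F}) = \alpha$ for sufficiently large $n$.

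Concretely, for a parameter $k = \lfloor cn \rfloor$ with $c > 0$ a small constant to be chosen in terms of $\eps, \alpha, r$, form $G'$ by adjoining to $G$ new vertices $v_1', \ldots, v_k'$, each with the same link as $v$: for every edge $\{v\} \cup e \in E(G)$ and every $i \in [k]$, add the edge $\{v_i'\} \cup e$, and place no edges inside $\{v, v_1', \ldots, v_k'\}$. Then $v(G') = n + k$ and $e(G') = e(G) + k\, d(v)$. The projection $\phi \colon V(G') \to V(G)$ that sends every $v_i'$ to $v$ and fixes all other vertices is, by construction, a surjective homomorphism of $r$-graphs. If $G'$ contained some $F \in \mathcal{F}$, then $\phi(F)$ would be a subgraph of $G$ and a surjective homomorphic image of $F$; by weak closure $\phi(F)$ would contain a member of $\mathcal{F}$, contradicting the $\mathcal{F}$-freeness of $G$. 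Hence $G'$ is $\mathcal{F}$-free.

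For the density, using $e(G) \geq (\alpha - \tau)\binom{n}{r}$, $d(v) \geq (\alpha + \eps)\binom{n-1}{r-1}$, and the asymptotics $\binom{n-1}{r-1} = \frac{r}{n}\binom{n}{r}$ and $\binom{n+k}{r} \sim (1+c)^r \binom{n}{r}$, the density $e(G')/\binom{n+k}{r}$ is, up to an error $o(1)$ as $n \to \infty$, at least
\begin{equation*}
\psi(c) := \frac{(\alpha - \tau) + cr(\alpha + \eps)}{(1+c)^r}.
\end{equation*}
One checks $\psi(0) = \alpha - \tau$ and $\psi'(0) = r(\eps + \tau) > 0$, so for $c$ in a small neighborhood of $0$, $\psi(c) \geq (\alpha - \tau) + cr(\eps + \tau) - K c^2$ for some constant $K = K(r, \alpha, \eps)$. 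I would choose $c$ small enough that $Kc^2 < cr\eps/2$, and then $\tau < cr\eps/4$, which yields $\psi(c) > \alpha + \delta$ for some $\delta = \delta(\eps, \alpha, r) > 0$. Taking $n_V$ large then guarantees $e(G')/\binom{n+k}{r} > \alpha + \delta/2$, while $\ex(n + k, \mathcal{F})/\binom{n+k}{r} < \alpha + \delta/2$ by $\pi(\mathcal{F}) = \alpha$, contradicting the $\mathcal{F}$-freeness of $G'$.

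The main obstacle is the density calibration: $c$ must be large enough for the linear gain $cr\eps$ from blowing up $v$ to overcome the initial density deficit $\tau$, yet small enough that the $(1+c)^r$ dilution in the denominator does not erase the gain through its quadratic correction. This forces $\tau$ to be of order $\eps^2$ (up to constants depending on $r$ and $\alpha$), which is fine since $\tau$ is chosen after $\eps$.
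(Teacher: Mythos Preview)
Your argument is correct and rests on the same core idea as the paper's proof: clone the high-degree vertex $v$ enough times to push the density of an $\mathcal{F}$-free graph strictly above $\alpha$, using weak closure under homomorphisms to certify that the cloned graph remains $\mathcal{F}$-free. The implementation differs slightly. The paper first uses an averaging argument to locate a set $S'$ of roughly $\frac{\eps n}{6(r-1)}$ vertices of degree at most $(\alpha+\eps/2)\binom{n-1}{r-1}$, deletes their incident edges, and \emph{replaces} them by clones of $v$, thereby keeping the vertex count at $n$; the gain $e(G')-e(G)$ is then bounded directly. You instead \emph{add} $k=\lfloor cn\rfloor$ new clones and compare against $\binom{n+k}{r}$. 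Your route is a bit cleaner, as it skips the averaging step to find low-degree vertices; the paper's route has the minor advantage of working on a fixed vertex set, but that is not needed here. Both yield $\tau$ of order $\eps^2$, exactly as you observe.
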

\begin{proof}
We assume w.l.o.g. that $\eps < 1$.

Set $\tau := \frac{\eps^2 r}{72(r-1)}$ and choose $n_V \geq 2$ so that for any $n \geq n_V$, $\ex(n, \mathcal{F}) < (\alpha + \tau)\binom{n}{r}$.

We shall assume for a contradiction that $G$ is $\mathcal{F}$-free. Set $n := v(G)$. Then
\begin{equation*}
(\alpha + \frac{\eps}{4})\binom{n}{r} > e(G) = \sum_{x \in V(G)} \frac{d(x)}{r}.
\end{equation*}
Let $S := \{x \in V(G) : d(x) \leq (\alpha + \frac{\eps}{2})\binom{n-1}{r-1}\}$. Then $e(G) \geq \frac{n - |S|}{r}(\alpha + \frac{\eps}{2})\binom{n-1}{r-1}$, so
\begin{equation*}
|S| \geq (1 - \frac{\alpha + \eps / 4}{\alpha + \eps/2})n = \frac{\eps}{4\alpha + 2\eps}n.
\end{equation*}
Note that $\frac{\eps}{4\alpha + 2\eps} > \frac{\eps}{6(r-1)}$, hence we can fix $S' \subseteq S$ of size $\frac{\eps n}{6(r-1)}$ (here and in what follows we ignore upper and lower integer parts; this does not affect our arguments).

We construct a new $r$-graph $G'$ from $G$ by deleting all edges incident to $S'$ and adding all edges $\{\{x\} \cup e : \{v\} \cup e \in E(G \setminus S'), x \in S'\}$. Then $S' \cup \{v\}$ is an independent set in $G'$. 

We claim $G'$ is $\mathcal{F}$-free. Indeed, if $f : V(F) \rightarrow V(G')$ is any embedding of a graph $F \in \mathcal{F}$ into $G'$, then composing $f$ with the map $g : V(G') \rightarrow V(G)$ that sends $S'$ to $v$ and is the identity otherwise, gives a homomorphism of $F$ into $G$. Thus there exists a surjective homomorphism $f' : F \rightarrow F'$ with $F' \subseteq G$. As $\mathcal{F}$ is weakly closed under homomorphisms, $F'$ and hence $G$ contains an element of $\mathcal{F}$ as a subgraph, a contradiction.

Consequently $e(G') < (\alpha + \tau)\binom{n}{r}$. But
\begin{align*}
e(G') - e(G) &\geq |S'| \left(d_G(v) - |S'|\binom{n-2}{r-2}\right) - |S'|(\alpha + \frac{\eps}{2})\binom{n-1}{r-1}\\
&\geq |S'| \left((\alpha + \eps)\binom{n-1}{r-1} - |S'|\binom{n-2}{r-2}\right) - |S'|(\alpha + \frac{\eps}{2})\binom{n-1}{r-1}\\
&\geq |S'|\binom{n-2}{r-2}((\alpha + \eps)\frac{n-1}{r-1} - |S'|) - |S'|(\alpha + \frac{\eps}{2})\binom{n-1}{r-1}\\
&\geq |S'|(\alpha + \frac{2\eps}{3})\binom{n-1}{r-1} - |S'|(\alpha + \frac{\eps}{2})\binom{n-1}{r-1}, \textrm{ as $|S'| \leq \frac{\eps n}{6(r-1)}$ and $n_V \geq 2$,}\\
&= \frac{\eps}{6}|S'|\binom{n-1}{r-1}\\
&=\frac{\eps^2 r}{36(r-1)}\binom{n}{r},
\end{align*}
which is at least $2\tau \binom{n}{r}$. Hence $e(G') \geq e(G) + 2\tau \binom{n}{r} \geq (\alpha + \tau)\binom{n}{r}$, a contradiction. 
\end{proof}

\subsection{\normalsize The $\oplus_r$ function}

We now study the map $\oplus_r : [0,1] \times [0,1] \rightarrow [0,1]$.

\begin{lemma}
\label{lem:monoton}
The $\oplus_r$ function is nondecreasing in each of its arguments on $[0,1] \times [0,1]$. In fact, for any $\alpha, \beta \in [0,1)$ and $0 \leq \eps \leq \alpha$ we have
\begin{equation}
\label{eq:increase}
\alpha \oplus_r \beta \geq (\alpha - \eps) \oplus_r \beta + \eps\left(\frac{\sqrt[r-1]{1-\beta}}{1+\sqrt[r-1]{1 - \beta}}\right)^r
\end{equation}
\end{lemma}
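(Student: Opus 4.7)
The plan is to use the closed-form expression for $\mathfrak{g}_{\alpha,\beta}$ derived in the proof of Lemma \ref{lem:max2}, namely
\[
\mathfrak{g}_{\alpha,\beta}(x) \;=\; 1 - (1-\alpha)x^r - (1-\beta)(1-x)^r.
\]
The crucial observation is that this is \emph{affine} in $\alpha$, so that for every $x \in [0,1]$ one has the identity
\[
\mathfrak{g}_{\alpha,\beta}(x) \;=\; \mathfrak{g}_{\alpha - \eps,\beta}(x) + \eps\, x^r.
\]
This reduces the whole lemma to choosing a good test point for the supremum defining $\alpha \oplus_r \beta$.

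First I would prove the displayed inequality \eqref{eq:increase}. Let $x^\star := x_{\alpha - \eps,\beta} = \sqrt[r-1]{1-\beta}/(\sqrt[r-1]{1-(\alpha-\eps)} + \sqrt[r-1]{1-\beta})$ be the maximizer of $\mathfrak{g}_{\alpha-\eps,\beta}$ given by Lemma \ref{lem:max2}. By that same lemma and the identity above,
\[
\alpha \oplus_r \beta \;=\; \sup_{x \in [0,1]} \mathfrak{g}_{\alpha,\beta}(x) \;\geq\; \mathfrak{g}_{\alpha,\beta}(x^\star) \;=\; \mathfrak{g}_{\alpha-\eps,\beta}(x^\star) + \eps\,(x^\star)^r \;=\; (\alpha - \eps)\oplus_r \beta + \eps\,(x^\star)^r.
\]
It only remains to lower bound $(x^\star)^r$ by $\bigl(\sqrt[r-1]{1-\beta}/(1+\sqrt[r-1]{1-\beta})\bigr)^r$. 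Since $\alpha - \eps \geq 0$, we have $1 - (\alpha-\eps) \leq 1$ and hence $\sqrt[r-1]{1-(\alpha-\eps)} \leq 1$, so the denominator of $x^\star$ is at most $1+\sqrt[r-1]{1-\beta}$; the inequality then follows by raising to the $r$-th power.

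For the monotonicity claim on all of $[0,1]\times[0,1]$: when $\alpha,\beta \in [0,1)$, the right-hand side term in \eqref{eq:increase} is nonnegative, so $\oplus_r$ is nondecreasing in its first argument. Commutativity (noted in Lemma \ref{lem:local_inf}) transfers this to the second argument. The boundary cases are trivial: whenever $\alpha = 1$ or $\beta = 1$, definition \eqref{eq:addition} gives $\alpha \oplus_r \beta = 1$ (the numerator factors as $(1-\alpha)(1-\beta)$, which vanishes, while $1 \oplus_r 1 = 1$ is set by convention), so no monotonicity is violated there.

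There is no genuine obstacle in this argument; its entire content is the affineness of $\mathfrak{g}_{\alpha,\beta}$ in $\alpha$ and the trick of evaluating at the maximizer of the \emph{smaller} parameter, which quantifies the change in the supremum without having to analyse the far more unwieldy closed-form expression for $\oplus_r$ directly.
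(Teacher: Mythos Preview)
Your argument is correct and takes a genuinely different route from the paper's. The paper works directly with the closed form $h_\beta(\alpha) := \alpha \oplus_r \beta$, computes its derivative in $\alpha$ to be $\bigl(\sqrt[r-1]{1-\beta}/(\sqrt[r-1]{1-\alpha}+\sqrt[r-1]{1-\beta})\bigr)^r$, lower-bounds this derivative by its value at $\alpha=0$, and integrates over $[\alpha-\eps,\alpha]$. You instead exploit the variational description $\alpha \oplus_r \beta = \sup_x \mathfrak{g}_{\alpha,\beta}(x)$ together with the affineness of $\mathfrak{g}_{\alpha,\beta}(x)$ in $\alpha$, and evaluate at the maximizer $x^\star = x_{\alpha-\eps,\beta}$ of the smaller parameter. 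The two are linked by the envelope theorem (the paper's $h_\beta'(\alpha)$ is exactly $(x_{\alpha,\beta})^r$, the partial $\partial_\alpha \mathfrak{g}_{\alpha,\beta}$ at the maximizer), but your version is more elementary: it avoids differentiating the rather unpleasant closed form for $\oplus_r$ and needs only a single test-point evaluation. Both approaches arrive at the same constant $(x_{0,\beta})^r$ after the final lower bound.
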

\begin{proof}
The first statement follows immediately from the definition of $\oplus_r$.

To prove the second, define $h_{\beta} : [0,1] \rightarrow \mathbb{R}$ by
\begin{equation*}
h_\beta(x) = 1 - \frac{(1-x)(1-\beta)}{(\sqrt[r-1]{1-x} + \sqrt[r-1]{1-\beta})^{r-1}}.
\end{equation*}
Then the first order derivative of $h_\beta(x)$ exists and it is equal to
\begin{equation*}
h_\beta'(x) = \left(\frac{\sqrt[r-1]{1-\beta}}{\sqrt[r-1]{1-x}+\sqrt[r-1]{1-\beta}}\right)^r. 
\end{equation*}
Thus for any $x \in [0,1)$ we have
\begin{equation*}
h_\beta'(x) \geq h_\beta'(0) = \left(\frac{\sqrt[r-1]{1-\beta}}{1+\sqrt[r-1]{1 - \beta}}\right)^r.
\end{equation*}
Hence
\begin{align*}
\alpha \oplus_r \beta &= (\alpha - \eps) \oplus_r \beta + \int_{\alpha - \eps}^\alpha h_\beta'(x)\,dx\\
&\geq (\alpha - \eps) \oplus_r \beta + \eps h_\beta'(0)\\
&= (\alpha - \eps) \oplus_r \beta + \eps\left(\frac{\sqrt[r-1]{1-\beta}}{1+\sqrt[r-1]{1 - \beta}}\right)^r,
\end{align*}
proving the lemma.
\end{proof}

\subsection{\normalsize The Rigidity Lemma}

From this point on we adopt the strategy developed by Pikhurko in \cite{pikhurko12} (which in turn follows the Stability Method pioneered by Simonovits). The first step is to prove a \textit{rigidity lemma}: we construct some graphs which can embed only in a prescribed way in a graph of the form $G \times H$, where $\times$ is a special type of product of hypergraphs which we now define.

Let $G$ and $H$ be two $r$-graphs on disjoint vertex sets. We define $G \times H$ as the $r$-graph with vertex set $V(G) \dot\cup V(H)$ and edge set $E(G) \dot\cup E(H) \dot\cup \{e \in \binom{V(G) \dot\cup V(H)}{r} : e \textrm{ \upshape{intersects both $V(G)$ and $V(H)$}}\}$. We then extend this definition to $r$-graphs with intersecting vertex sets in the same manner as before.

If $\mathcal{F}$ is family of $r$-graphs and $M \geq 1$ an integer, we let $\mathcal{F}(M)$ be a maximal family of $r$-graphs with the following properties:
\renewcommand{\theenumi}{(\roman{enumi})}
\begin{enumerate}
\item $\mathcal{F} \subseteq \mathcal{F}(M)$.
\item If $F \in \mathcal{F}(M) \setminus \mathcal{F}$ then $F$ has at least one edge and $v(F) \leq M$.
\item $\pi(\mathcal{F}(M)) = \pi(\mathcal{F})$.
\end{enumerate}
We call $\mathcal{F}(M)$ \textit{an $M$-closure of $\mathcal{F}$}. Clearly there could be several distinct $M$-closures for a fixed $\mathcal{F}$, and moreover an $M$-closure always exists for any $M \geq 1$.

Suppose now that $\pi(\mathcal{F}) > 0$. By maximality, for any $F \notin \mathcal{F}(M)$ on at most $M$ vertices, we have $\pi(\mathcal{F}(M) \cup \{F\}) < \pi(\mathcal{F})$. Thus we can define the \textit{threshold of $\mathcal{F}(M)$} as
\begin{equation}
\label{eq:thresh}
\theta(\mathcal{F}(M)) := \max\{\pi(\mathcal{F}(M) \cup \{F\}) : F \notin \mathcal{F}(M) \textrm{ and } v(F) \leq M\},
\end{equation}
and this number is well-defined and strictly less than $\pi(\mathcal{F})$. For any $0 < \eps < \pi(\mathcal{F}) - \theta(\mathcal{F}(M))$ and any $F \notin \mathcal{F}(M)$ on at most $M$ vertices, Theorem \ref{thm:supers} applied to $\eps$ and $\mathcal{F}(M) \cup \{F\}$ gives us a $\delta(\eps, F) > 0$ and an $n_S(\eps, F) \geq 1$. We set
\begin{align*}
\delta(\eps, \mathcal{F}(M)) &:= \min\{ \delta(\eps, F) : F \notin \mathcal{F}(M) \textrm{ and } v(F) \leq M\},\\
n^*(\eps, \mathcal{F}(M)) &:= \max\{ n_S(\eps, F) : F \notin \mathcal{F}(M) \textrm{ and } v(F) \leq M\}.
\end{align*}

If $\pi(\mathcal{F}) = 0$ then $\mathcal{F}(M)$ contains all $r$-graphs on at most $M$ vertices and with at least one edge. For technical reasons we define $\theta(\mathcal{F}(M)) = -1$, and for any $0 < \eps < 1$, we set $\delta(\eps, \mathcal{F}(M)) = 1$ and $n^{*}(\eps, \mathcal{F}(M)) = 1$.

If $\mathcal{F}$ is family of $r$-graphs and $F$ is any $r$-graph, we say $F$ is \textit{valid with respect to (w.r.t) $\mathcal{F}$} if $\pi(\mathcal{F} \cup \{F\}) < \pi(\mathcal{F})$ or $\lambda(F) = \pi(\mathcal{F}) = 0$ (the point of this last condition is that we want the $1$-vertex graph to be valid w.r.t. any family of $r$-graphs). Otherwise we call $F$ \textit{invalid w.r.t. $\mathcal{F}$}. We say $F$ is \textit{minimal invalid w.r.t. $\mathcal{F}$} if $F$ is not valid, but for any $x \in V(F)$, the $r$-graph $F \setminus x$ is valid. Note that any invalid $r$-graph contains at least one edge. Moreover the empty graph is valid w.r.t. $\mathcal{F}$ for any family of $r$-graphs $\mathcal{F}$.

One particular example the reader should keep in mind is the family $\mathcal{F} = \{I_{r-1}\}$, where $I_{r-1}$ is the $r$-graph consisting of $r-1$ isolated vertices. By our definition $I_{r-1}$ is valid w.r.t. $\mathcal{F}$. To complicate matters further, any $I_{r-1}$-free $r$-graph has a bounded number of vertices and hence there is no sequence of extremal graphs with size tending to infinity. Later on we will show that we can avoid working with such families, but we will allow them for now.

Now suppose two families of $r$-graphs $\mathcal{F}_\alpha$ and $\mathcal{F}_\beta$ are given. Let $F$ be any $r$-graph.
A partition of $V(F)$ into $C_1$ and $C_2$ is denoted by $(C_1, C_2)$, and we identify $C_1$ with the $r$-graph $F[C_1]$,
and similarly $C_2$ with the $r$-graph $F[C_2]$. We allow $C_1$ or $C_2$ to be empty. A partition $(C_1, C_2)$ is called
\textit{valid w.r.t $\mathcal{F}_\alpha$ and $\mathcal{F}_\beta$}
if $C_1$ is valid w.r.t. $\mathcal{F}_\alpha$ and $C_2$ is valid w.r.t. $\mathcal{F}_\beta$.

Let us now record several simple observations concerning valid graphs.

\begin{lemma}
\label{lem:valid}
Let $\mathcal{F}$ be any family of $r$-graphs, $M \geq 1$ arbitrary and $\mathcal{F}(M)$ an arbitrary $M$-closure of $\mathcal{F}$. Then the following holds.
\begin{enumerate}
\item If $H \subseteq G$ and $G$ is valid w.r.t. $\mathcal{F}$ then so is $H$.
\item If $H \subseteq G$ and $H$ is invalid w.r.t. $\mathcal{F}$ then so is $G$.
\item If $G$ and $H$ are both valid w.r.t. $\mathcal{F}$ then so is $G \dot\cup H$.
\item If $G$ is valid w.r.t. $\mathcal{F}$ then $G$ is also valid w.r.t. $\mathcal{F}(M)$.
\item If $G \in \mathcal{F}$ is invalid w.r.t. $\mathcal{F}$ then $G$ is also invalid w.r.t. any family of $r$-graphs $\mathcal{G}$ containing $\mathcal{F}$, in particular $G$ is invalid w.r.t. $\mathcal{F}(M)$.
\end{enumerate}
\end{lemma}
\begin{proof}
Set $\alpha := \pi(\mathcal{F})$. We first prove (i).

If $\alpha > 0$ then by Lemma \ref{lem:piprop}, (ii), we have that $\pi(\mathcal{F} \cup \{H\}) \leq \pi(\mathcal{F} \cup \{G\}) < \alpha$, and hence $H$ is valid. If $\alpha = 0$, by Lemma \ref{lem:lambdaprop}, (i), we have that $\lambda(H) \leq \lambda(G) = 0$, and hence $H$ is again valid. Thus (i) holds.

We prove (ii).

By Lemma \ref{lem:piprop}, (i) and (ii), $\alpha = \pi(\mathcal{F} \cup \{H\}) \leq \pi(\mathcal{F} \cup \{G\}) \leq \alpha$ and so $\pi(\mathcal{F} \cup \{G\}) = \alpha$. Furthermore as $H$ is invalid we have $\lambda(G) \geq \lambda(H) > 0$. Hence (ii) holds as well.

We now prove (iii).

If $\alpha = 0$ then by Lemma \ref{lem:lambdaprop}, (iii),
\begin{equation*}
\lambda(G \dot\cup H) = \max\{\lambda(G), \lambda(H)\} = 0,
\end{equation*}
as $G$ and $H$ are both valid w.r.t. $\mathcal{F}$. Hence $G \dot\cup H$ is also valid.

If $\alpha > 0$, then by Lemma \ref{lem:piprop}, (iii),
\begin{equation*}
\pi(\mathcal{F} \cup \{G \dot\cup H\}) = \max\{\pi(\mathcal{F} \cup \{G\}), \pi(\mathcal{F} \cup \{H\})\} < \alpha,
\end{equation*}
again as $G$ and $H$ are both valid w.r.t. $\mathcal{F}$. Thus $G \dot\cup H$ is also valid, showing (iii).

We prove (iv).

Let $G$ be any $r$-graph valid w.r.t. $\mathcal{F}$. If $\alpha > 0$, then by Lemma \ref{lem:piprop}, (i),
\begin{equation*}
\pi(\mathcal{F}(M) \cup \{G\}) \leq \pi(\mathcal{F} \cup \{G\}) < \pi(\mathcal{F}) = \pi(\mathcal{F}(M)) 
\end{equation*}
and hence $G$ is valid w.r.t. $\mathcal{F}(M)$.

If $\alpha = 0$ then $\pi(\mathcal{F}(M)) = 0$ as well, and again $G$ is valid w.r.t. $\mathcal{F}(M)$. This shows (iv).

We prove (v).

Assume $G \in \mathcal{F}$ and $G$ is invalid w.r.t. $\mathcal{F}$. Then $\lambda(G) > 0$. If $\mathcal{G} \supseteq \mathcal{F}$ is any family of $r$-graphs containing $\mathcal{F}$, then $G \in \mathcal{G}$. Hence $\pi(\mathcal{G} \cup \{G\}) = \pi(\mathcal{G})$. Thus $G$ is invalid w.r.t. $\mathcal{G}$, showing (v).
\end{proof}

Note that in Lemma \ref{lem:valid}, (v), the assumption $G \in \mathcal{F}$ played a crucial role.

\begin{lemma}[The Rigidity Lemma]
\label{lem:rigid}
Let $\mathcal{F}_\alpha$ and $\mathcal{F}_\beta$ be two non-empty families of $r$-graphs with $\pi(\mathcal{F}_\alpha) = \alpha$ and $\pi(\mathcal{F}_\beta) = \beta$.

Let $P$ be any $r$-graph valid or minimal invalid w.r.t. $\mathcal{F}_\alpha$ such that if $P$ is minimal invalid, then $P \in \mathcal{F}_\alpha$. Let $Q$ be any $r$-graph minimal invalid w.r.t. $\mathcal{F}_\beta$ such that $Q \in \mathcal{F}_\beta$.

For any choice of $v \in V(P)$ and $w \in V(Q)$, there exists an $M_{P, Q, v, w} > 0$ such that for any $M \geq M_{P, Q, v, w}$ and any $M$-closures $\mathcal{F}_{\alpha}(M)$ and $\mathcal{F}_{\beta}(M)$ the following holds.

There exists an $r$-graph $C(P, Q, v, w)$ with the following properties:
\renewcommand{\theenumi}{(\Alph{enumi})}
\begin{enumerate}
\item $v(C(P, Q, v, w)) \leq M_{P, Q, v, w}$.
\item Let $K$ be the graph obtained from $P$ and $Q$ by identifying $v$ with $w$. Then $C(P, Q, v, w)$ contains an induced copy $K'$ of $K$ such that $C(P, Q, v, w) \setminus K'$ has a valid partition $(C_1, C_2)$ w.r.t. $\mathcal{F}_{\alpha}(M)$ and $\mathcal{F}_{\beta}(M)$, and any edge intersecting $K'$ is either contained in $K'$, or intersects $P \setminus v$ in one vertex and $C_2$ in $r-1$ vertices, or intersects $Q \setminus w$ in one vertex and $C_1$ in $r-1$ vertices.
\item If $P$ is valid w.r.t. $\mathcal{F}_\alpha$ then for any valid partition $(C_1, C_2)$ of $C(P, Q,v,w)$ w.r.t. $\mathcal{F}_\alpha(M)$ and $\mathcal{F}_\beta(M)$ we have $P \subseteq C_1$.

If $P$ is minimal invalid w.r.t. $\mathcal{F}_\alpha$ then $C(P, Q, v, w)$ has no valid partition w.r.t. $\mathcal{F}_{\alpha}(M)$ and $\mathcal{F}_{\beta}(M)$.
\end{enumerate}
\end{lemma}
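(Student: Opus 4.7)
The plan is to construct $C(P, Q, v, w)$ explicitly by combining large ``valid-extremal'' pieces for $\mathcal{F}_\alpha(M)$ and $\mathcal{F}_\beta(M)$ with a copy of $K$ and the crossing edges dictated by (B). I would choose an integer $N$ (depending on $P, Q, \mathcal{F}_\alpha, \mathcal{F}_\beta$ but not on $M$) and take disjoint vertex sets $A_1, A_2$ of size $N$ along with a fresh copy $K'$ of $K$ (the graph obtained from $P \dot\cup Q$ by identifying $v$ with $w$). On $A_1$ I would place a maximum $\mathcal{F}_\alpha(M)$-free $r$-graph $G_1$ and on $A_2$ a maximum $\mathcal{F}_\beta(M)$-free $r$-graph $G_2$ (each replaced by the empty graph if the corresponding $\pi$ is zero). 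Then $C(P, Q, v, w)$ is the $r$-graph on $V(K') \cup A_1 \cup A_2$ with edges $E(G_1) \cup E(G_2) \cup E(K')$ together with every $r$-set meeting both $A_1$ and $A_2$, every $r$-set consisting of one vertex in $P \setminus v$ and $r-1$ vertices in $A_2$, and every $r$-set consisting of one vertex in $Q \setminus w$ and $r-1$ vertices in $A_1$; set $M_{P,Q,v,w} := 2N + v(K)$.

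Properties (A) and (B) will then follow quickly. (A) holds by the definition of $M_{P,Q,v,w}$. For (B), the natural candidate valid partition of $C(P,Q,v,w) \setminus K'$ is $(A_1, A_2)$: $G_1$ is $\mathcal{F}_\alpha(M)$-free by construction, so in particular $G_1 \notin \mathcal{F}_\alpha(M)$ (otherwise $G_1$ would contain itself as a forbidden subgraph); since $v(G_1) = N \leq M$, the maximality defining $\mathcal{F}_\alpha(M)$ then forces $\pi(\mathcal{F}_\alpha(M) \cup \{G_1\}) < \alpha$, so $G_1$ is $\mathcal{F}_\alpha(M)$-valid, and symmetrically $G_2$ is $\mathcal{F}_\beta(M)$-valid. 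The edge-type restrictions incident to $K'$ are built in by construction.

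For (C), I would fix any valid partition $(C_1, C_2)$ of $C(P, Q, v, w)$; note that validity forces $C_1$ to be $\mathcal{F}_\alpha(M)$-free and $C_2$ to be $\mathcal{F}_\beta(M)$-free (otherwise forbidding the corresponding $C_i$ on top of the $M$-closure would not strictly decrease the Tur\'an density). The rigidity argument runs as follows. If some $a \in A_1$ were in $C_2$, then via the complete mixed edges between $A_1$ and $A_2$, $a$ would be adjacent within the induced subgraph on $C_2$ to every $(r-1)$-subset of $A_2 \cap C_2$, giving degree at least $\binom{|A_2 \cap C_2|}{r-1}$. Once $|A_2 \cap C_2|$ is a large enough fraction of $|C_2|$, this exceeds $(\beta + \varepsilon)\binom{|C_2|-1}{r-1}$, and Lemma \ref{lem:add_vertex} applied to $\mathcal{F}_\beta(M)$ produces an element of $\mathcal{F}_\beta(M)$ inside $C_2$, contradicting $\mathcal{F}_\beta(M)$-freeness. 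Iterating removes all exceptions, giving $A_1 \subseteq C_1$ and symmetrically $A_2 \subseteq C_2$. The same degree-counting applied via the crossing edges from $P \setminus v$ to $A_2$ forces $P \setminus v \subseteq C_1$, and analogously $Q \setminus w \subseteq C_2$. Finally, consider the shared vertex $v = w$: if $P$ is valid w.r.t.\ $\mathcal{F}_\alpha$, it remains valid w.r.t.\ $\mathcal{F}_\alpha(M)$ by Lemma \ref{lem:valid}(iv), placing $v \in C_1$ preserves $C_1$'s validity (via Lemma \ref{lem:valid}(iii) applied to $G_1 \dot\cup P$), whereas $v \in C_2$ would place $Q$ in $C_2$ and, via the invalidity-transfer of Lemma \ref{lem:valid}(ii) and (v), destroy $C_2$'s validity, so $V(P) \subseteq C_1$. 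When $P$ is minimal invalid with $P \in \mathcal{F}_\alpha$, putting $v \in C_1$ embeds $P$ in $C_1$ and Lemma \ref{lem:valid}(v) together with (ii) makes $C_1$ invalid, contradiction; the $v \in C_2$ case is symmetric using $Q$, so no valid partition exists.

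The main obstacle is the quantitative heart of (C): the iterative application of Lemma \ref{lem:add_vertex} must happen at scales bounded by $M_{P,Q,v,w}$, independently of $M$. This requires choosing $N$ large enough that the threshold $n_V$ from Lemma \ref{lem:add_vertex} (applied with $\mathcal{F}_\alpha(M)$ and $\mathcal{F}_\beta(M)$) is at most $N$ uniformly in $M \geq M_{P,Q,v,w}$, and controlling the density of $C_1, C_2$ throughout the iteration so that the density hypothesis $d(G) \geq \alpha - \tau$ of the lemma holds. Carefully tracking the supersaturation parameters and ensuring uniform extremal-density estimates for $\mathcal{F}_\alpha(M)$-free and $\mathcal{F}_\beta(M)$-free graphs on $N$ vertices as $M$ varies is the technical bulk of the proof.
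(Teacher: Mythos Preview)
Your construction yields (A) and (B) cleanly, but the argument for (C) has a genuine gap that I do not see how to close along the lines you sketch. The core problem is that Lemma~\ref{lem:add_vertex} carries a \emph{density hypothesis}: the host graph must have density at least $\beta-\tau$ before the high-degree vertex forces a forbidden subgraph. When you fix an arbitrary valid partition $(C_1,C_2)$ of $C(P,Q,v,w)$, there is no reason whatsoever that the induced graph on $C_2$ has density near $\beta$; validity of $C_2$ with respect to $\mathcal{F}_\beta(M)$ says only that $\pi(\mathcal{F}_\beta(M)\cup\{C_2\})<\beta$, which places no lower bound on $d(C_2)$. Worse, your degree count for a stray vertex $a\in A_1\cap C_2$ depends on $|A_2\cap C_2|$ being a large fraction of $|C_2|$, which is precisely what you are trying to prove---the argument that $A_1\subseteq C_1$ presupposes $A_2\subseteq C_2$ and vice versa, so the bootstrapping never gets started. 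Concretely, nothing you have built rules out a ``swapped'' valid partition in which most of $A_2$ lands in $C_1$ and most of $A_1$ lands in $C_2$: the graph $G_1$ you placed on $A_1$ is $\mathcal{F}_\alpha(M)$-free, but it may well be valid with respect to $\mathcal{F}_\beta(M)$ too. (There is also a minor issue that Lemma~\ref{lem:add_vertex} requires the family to be weakly closed under homomorphisms, which $\mathcal{F}_\beta(M)$ need not be, though this could be patched by working with $\mathcal{F}_\beta$ instead.)

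The paper's proof avoids asymptotic density arguments entirely and is purely combinatorial. It proceeds by induction on $v(P)$: the base case $v(P)=1$ takes $C(P,Q,v,w)=Q$, and for the inductive step one starts from $C(P\setminus v)$ (which by induction forces $P\setminus v\subseteq C_1$ in every valid partition) and iteratively builds a sequence $F_0\subseteq F_1\subseteq\cdots$ by, at each stage, taking any surviving valid partition $(D_1^{i+1},D_2^{i+1})$ of $F_i$ with $P\not\subseteq D_1^{i+1}$ and attaching fresh copies of $P\setminus v$, $Q\setminus w$, $Q\setminus w$ and a new vertex $z_{i+1}$ in a gadget that renders that particular partition unusable at the next stage. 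The key finiteness claim is that the restrictions $(C_1^i,C_2^i)$ of these partitions to the fixed copy of $C(P\setminus v)$ are pairwise distinct, so the process halts after at most $2^{v(C(P\setminus v))}$ steps; this is where the inductive property of $C(P\setminus v)$ is used. The resulting $C(P,Q,v,w)$ then has no valid partition missing $P$ in its first part, by construction. This ``enumerate and kill'' strategy is what makes (C) go through without any control on densities, and it is quite different in spirit from your extremal-graph-plus-degree-lemma plan.
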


Before we proceed to the proof of the Rigidity Lemma we note the following consequence, which we will also use in the proof.

\begin{lemma}[Addendum to the Rigidity Lemma]
Under the hypotheses of Lemma \ref{lem:rigid}, the following holds.
\renewcommand{\theenumi}{(\alph{enumi})}
\begin{enumerate}
\item If $P$ is valid w.r.t. $\mathcal{F}_\alpha$ then $C(P, Q, v, w)$ has a valid partition $(C_1, C_2)$ w.r.t. $\mathcal{F}_\alpha(M)$ and $\mathcal{F}_\beta(M)$, with $C_1$ containing an induced copy $P'$ of $P$. Furthermore any edge intersecting $P'$ is either contained in $P'$ or intersects $P'$ in one vertex and $C_2$ in $r-1$ vertices.
\item If $P$ is minimal invalid w.r.t. $\mathcal{F}_\alpha$ then $C(P, Q, v, w)$ has a partition $(C_1, C_2)$ with $C_1$ containing an induced copy $P'$ of $P$, such that $(C_1 \setminus P', C_2)$ is a valid partition of $C(P, Q, v, w) \setminus P'$ w.r.t. $\mathcal{F}_\alpha(M)$ and $\mathcal{F}_\beta(M)$.  Furthermore any edge intersecting $P'$ is either contained in $P'$ or intersects $P'$ in one vertex and $C_2$ in $r-1$ vertices.
\end{enumerate}
\end{lemma}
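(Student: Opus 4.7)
The plan is to extend the partition supplied by Lemma \ref{lem:rigid}(B) to a partition of the full graph $C := C(P, Q, v, w)$ by absorbing the two ``halves'' of the induced copy $K'$ of $K$ into the two parts. I decompose $K' = P' \cup Q'$ with $V(P') \cap V(Q') = \{v'\}$ (the identified vertex playing the role of both $v$ and $w$), and let $(C_1, C_2)$ be the valid partition of $C \setminus K'$ guaranteed by (B). I then set
\[
C_1^* := C_1 \cup V(P'), \qquad C_2^* := C_2 \cup (V(Q') \setminus \{v'\}).
\]

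The structural step is to verify, using the edge classification in (B), that there are no edges of $C$ crossing between $C_1$ and $V(P')$, nor between $C_2$ and $V(Q') \setminus \{v'\}$. A short case analysis over the three permitted edge types meeting $K'$ handles this, and the same analysis shows that $P'$ is induced in all of $C$: an edge of $K'$ internal to $V(P')$ must belong to $E(P')$, since an edge of $E(Q')$ internal to $V(P')$ would be trapped in the singleton $\{v'\}$. Consequently the induced subgraphs decompose as $C[C_1^*] = C[C_1] \,\dot\cup\, P'$ and $C[C_2^*] = C[C_2] \,\dot\cup\, (Q' \setminus v')$.

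For part (a), validity of $C_1^*$ w.r.t.\ $\mathcal{F}_\alpha(M)$ follows from Lemma \ref{lem:valid}(iii) applied to the valid pieces $C_1$ and $P'$, after promoting the validity of $P$ from $\mathcal{F}_\alpha$ to $\mathcal{F}_\alpha(M)$ via Lemma \ref{lem:valid}(iv). Validity of $C_2^*$ w.r.t.\ $\mathcal{F}_\beta(M)$ uses the same two parts of Lemma \ref{lem:valid} together with the minimal invalidity of $Q$, which supplies validity of $Q \setminus w$ w.r.t.\ $\mathcal{F}_\beta$. For part (b) the same $(C_1^*, C_2^*)$ is no longer a valid partition, but $P' \subseteq C_1^*$ and removing $V(P')$ leaves $(C_1, C_2^*)$, which is valid by exactly the argument above. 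In both parts the edge condition on $P'$ falls out of the case analysis: any $C$-edge meeting $V(P')$ is either an $E(P')$ edge (contained in $P'$), an $E(Q')$ edge meeting $V(P')$ only at $v'$ (hence in one vertex of $P'$ and $r-1$ vertices of $V(Q') \setminus \{v'\} \subseteq C_2^*$), or one of the mixed $(P' \setminus v')$-to-$C_2$ edges already guaranteed by (B); the $(Q' \setminus w')$-to-$C_1$ edges from (B) never touch $V(P')$.

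The whole argument is essentially bookkeeping on top of Lemma \ref{lem:rigid}. The only mildly delicate point is showing that $P'$ is induced in $C$ (rather than merely in $K'$), which is where the inducedness of $K'$ asserted by (B) is genuinely needed; without it one could imagine stray edges internal to $V(P')$ that were not accounted for in the case analysis.
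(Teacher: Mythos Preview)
Your proof is correct and follows essentially the same approach as the paper: you absorb the copy of $P$ (including the identified vertex) from $K'$ into the first part and the copy of $Q \setminus w$ into the second, then invoke Lemma~\ref{lem:valid}(iii),(iv) and the minimal invalidity of $Q$ exactly as the paper does. Your treatment is in fact slightly more explicit than the paper's in verifying that $P'$ is induced in all of $C$ and in carrying out the edge case analysis, but the argument is the same.
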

\begin{proof}
By (B) of the Rigidity Lemma, $C(P, Q, v, w)$ contains an induced copy $K'$ of $K$ such that $C(P,Q,v,w) \setminus K'$ has a valid partition $(C_1', C_2')$ w.r.t. $\mathcal{F}_\alpha(M)$ and $\mathcal{F}_\beta(M)$. Let $P_1$ and $Q_1$ be the copies of $P \setminus v$, respectively $Q \setminus w$, in $K'$, and $z$ the unique vertex in $K' \setminus (P_1 \cup Q_1)$.

Define $C_1 := C_1' \dot\cup (P_1 \cup \{z\})$ and $C_2 := C_2' \dot\cup Q_1$. Note that $C_1 \simeq C_1' \dot\cup P$, as $P_1$ and $z$ form an induced copy $P'$ of $P$ in $C_1$.

By (B), any edge intersecting $P'$ is either contained in $P'$ or intersects $P'$ in one vertex and $C_2$ in $r-1$ vertices.

As $Q_1$ is valid w.r.t. $\mathcal{F}_\beta$, $C_2$ is always valid w.r.t. $\mathcal{F}_\beta(M)$ by Lemma \ref{lem:valid}, (iii) and (iv).

Moreover if $P$ is valid w.r.t. $\mathcal{F}_\alpha$ then $C_1$ is also valid w.r.t. $\mathcal{F}_\alpha(M)$ by Lemma \ref{lem:valid}, (iii) and (iv). This proves (a).

Finally, if $P$ is minimal invalid w.r.t. $\mathcal{F}_\alpha$ then $C_1 \setminus P' = C_1'$ is valid w.r.t. $\mathcal{F}_\alpha(M)$. This proves (b).
\end{proof}

\begin{proof}[Proof of the Rigidity Lemma]
Define $M_1 := v(Q)$ and by induction on $k \geq 2$ define the positive integer
\begin{equation}
\label{eq:MF}
M_k := M_{k-1} + 2^{M_{k-1}}(2v(Q)-2+k).
\end{equation}

We shall show that the lemma holds for $M_{P, Q, v, w} := M_{v(P)}$. Let $M \geq M_{P, Q, v, w}$ be arbitrary and consider any $M$-closures $\mathcal{F}_\alpha(M)$ and $\mathcal{F}_\beta(M)$.

We prove by induction on $v(P) \geq 1$ that an $r$-graph $C(P, Q, v, w)$ with the desired properties exists.

First assume that $v(P) = 1$. Then $P$ is just a vertex $v$ and necessarily $P$ is valid w.r.t. $\mathcal{F}_\alpha$.

Define $C(P, Q, v, w) := Q$. Then $v(C(P, Q, v, w)) = M_1 = M_{P, Q, v, w}$, proving (A).

Clearly $Q$ is isomorphic to the $r$-graph $K'$ prescribed by (B), and the empty graph has always a valid partition $(\emptyset, \emptyset)$ w.r.t. $\mathcal{F}_\alpha(M)$ and $\mathcal{F}_\beta(M)$. This proves (B).

Finally, let $(C_1, C_2)$ be any valid partition of $C(P, Q, v, w)$ w.r.t. $\mathcal{F}_\alpha(M)$ and $\mathcal{F}_\beta(M)$. If $C_1 = \emptyset$ then $C_2 = Q$, contradicting the validity of $C_2$ w.r.t. $\mathcal{F}_\beta(M)$ by Lemma \ref{lem:valid}, (v). Therefore $C_1 \neq \emptyset$, and hence $P \subseteq C_1$, proving (C).

Now assume that $v(P) > 1$ and the induction hypothesis holds for all $r$-graphs $P'$ on fewer vertices, such that $P'$ is either valid w.r.t. $\mathcal{F}_\alpha$, or belongs to $\mathcal{F}_\alpha$ and is minimal invalid with respect to it.

If $P$ is valid, by Lemma \ref{lem:valid}, (i), so is $P\setminus v$. If $P$ is minimal invalid, then $P \setminus v$ is valid by definition. Thus in any case $P \setminus v$ is valid. Fix $v' \in V(P \setminus v)$ arbitrary. As $M \geq M_{P\setminus v, Q, v', w}$, the induction hypothesis gives us an $r$-graph $C(P \setminus v) := C(P\setminus v, Q, v', w)$ satisfying (A)-(C).

We define a sequence of $r$-graphs $F_0, F_1, \ldots$ as follows.

Let $(C_1^0, C_2^0)$ be the valid partition of $C(P \setminus v)$ w.r.t. $\mathcal{F}_\alpha(M)$ and $\mathcal{F}_\beta(M)$, guaranteed by the Addendum to the Rigidity Lemma, (a). Let $Q_1^0$ and $Q_2^0$ be vertex disjoint copies of $Q \setminus w$, and $P_1^0$ a copy of $P \setminus v$. To the $r$-graph $C_1^0 \times (C_2^0 \dot\cup Q_1^0)$ add $P_1^0$ and all edges intersecting $P_1^0$ in one vertex and $Q_1^0$ in $r-1$ vertices. Then add $Q_2^0$ and all edges intersecting $Q_2^0$ in one vertex and $C_1^0$ in $r-1$ vertices. Finally, add a vertex $z_0$ and edges in such a way that $P_1^0 \cup \{z_0\}$ induces a copy of $P$, and $Q_2^0 \cup \{z_0\}$ induces a copy of $Q$. No other edges incident with $z_0$ are added. This defines $F_0$.

Now suppose $i \geq 0$ and we have constructed $F_i$.

First assume there exists a partition $(D_1^{i+1}, D_2^{i+1})$ of $F_i$ valid w.r.t. $\mathcal{F}_\alpha(M)$ and $\mathcal{F}_\beta(M)$ such that $P \not\subseteq D_1^{i+1}$. We construct $F_{i+1}$ in a similar manner as above. Let $Q_1^{i+1}$ and $Q_2^{i+1}$ be vertex disjoint copies of $Q \setminus w$, and let $P_1^{i+1}$ be a copy of $P \setminus v$. To the $r$-graph $D_1^{i+1} \times (D_2^{i+1} \dot\cup Q_1^{i+1})$ add $P_1^{i+1}$ and all edges intersecting $P_1^{i+1}$ in one vertex and $Q_1^{i+1}$ in $r-1$ vertices. Then add $Q_2^{i+1}$ and all edges intersecting $Q_2^{i+1}$ in one vertex and $D_1^{i+1}$ in $r-1$ vertices. Finally, add a vertex $z_{i+1}$ and edges in such a way that $P_1^{i+1} \cup \{z_{i+1}\}$ induces a copy of $P$, and $Q_2^{i+1} \cup \{z_{i+1}\}$ induces a copy of $Q$. No other edges incident with $z_{i+1}$ are added. This defines $F_{i+1}$.

If no partition $(D_1^{i+1}, D_2^{i+1})$ with the desired properties exists, we set $C(P, Q, v, w) := F_i$ and we stop.

\begin{claim}
\label{clm:finite}
The sequence has at most $2^{v(C(P \setminus v))}$ terms.
\end{claim}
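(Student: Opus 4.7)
The plan is a strict-monotonicity argument on a family of candidate partitions. Define $\mathcal{V}_i$ to be the set of valid partitions (with respect to $\mathcal{F}_\alpha(M)$ and $\mathcal{F}_\beta(M)$) of $V(C(P\setminus v))$ that arise as the restriction to $V(C(P\setminus v))$ of some valid partition $(D_1,D_2)$ of $F_i$ satisfying $P\not\subseteq D_1$. By Lemma \ref{lem:valid}(i) each such restriction is itself a valid partition of $C(P\setminus v)$, so $\mathcal{V}_i$ is a subset of the power set of $V(C(P\setminus v))$ and hence $|\mathcal{V}_i|\le 2^{v(C(P\setminus v))}$. The construction produces $F_{i+1}$ from $F_i$ precisely when $\mathcal{V}_i\ne\emptyset$, so it suffices to establish $\mathcal{V}_{i+1}\subsetneq\mathcal{V}_i$ for every $i\ge 0$.

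The (easy) inclusion $\mathcal{V}_{i+1}\subseteq\mathcal{V}_i$ should come from restricting any valid partition of $F_{i+1}$ with $P$ not in the first part to $V(F_i)$: because the $\times$-product used to build $F_{i+1}$ only adds crossing edges between $D_1^{i+1}$ and $D_2^{i+1}$, the restricted partition of $F_i$ remains valid (via Lemma \ref{lem:valid}(i) applied inside the relevant induced subgraphs), and it induces the same partition of $V(C(P\setminus v))$.

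For strictness I would take as witness the restriction $\rho$ of the specific partition $(D_1^{i+1},D_2^{i+1})$ actually used to build $F_{i+1}$: by construction $\rho\in\mathcal{V}_i$, and I claim $\rho\notin\mathcal{V}_{i+1}$. The key is to show that any valid partition of $F_{i+1}$ extending $\rho$ is forced to place the entire newly attached copy $P_1^{i+1}\cup\{z_{i+1}\}$ of $P$ into the first part, contradicting the defining condition $P\not\subseteq D_1$ for membership in $\mathcal{V}_{i+1}$. Two ingredients feed into this: first, the crossing edges joining $Q_2^{i+1}$ to $D_1^{i+1}$ (together with the minimal invalidity of $Q$ w.r.t. $\mathcal{F}_\beta$ and hence w.r.t. $\mathcal{F}_\beta(M)$, by Lemma \ref{lem:valid}(v)) pin down the location of $Q_2^{i+1}$ and of $z_{i+1}$; second, property (C) of the Rigidity Lemma applied inductively to $C(P\setminus v)$ forces $P\setminus v$ into the first part of any valid partition restricted to $V(C(P\setminus v))$, which combined with the location of $z_{i+1}$ and the cross-edges between $P_1^{i+1}$ and $Q_1^{i+1}$ forces $P_1^{i+1}$ also into the first part.

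The main obstacle will be making this rigidity propagation watertight: one must rule out the possibility of splitting $P_1^{i+1}\cup\{z_{i+1}\}$ across the two sides of the partition by using the minimal invalidity of $Q$ (so that any placement of $Q_2^{i+1}\cup\{z_{i+1}\}$ straddling the partition would produce an invalid subgraph on the $Q$-side) together with the induced rigidity on the copy of $C(P\setminus v)$ living inside $F_0$. Once the strict inclusion $|\mathcal{V}_{i+1}|<|\mathcal{V}_i|$ is in hand, iterating from the initial bound $|\mathcal{V}_0|\le 2^{v(C(P\setminus v))}$ forces termination within $2^{v(C(P\setminus v))}$ steps, proving the claim.
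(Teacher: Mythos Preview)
Your approach is essentially the paper's argument repackaged. Both restrict the partitions to $V(C(P\setminus v))$ and use the same gadget-based contradiction: property (C) forces a copy $P'$ of $P\setminus v$ into the first part; the $\times$-edges then push $Q_1^{i+1}$ and $Q_2^{i+1}$ into the second part; and since the induced copy $P_1^{i+1}\cup\{z_{i+1}\}$ of $P$ cannot sit entirely in the first part, some vertex of it lands in the second part and, together with $Q_1^{i+1}$ or $Q_2^{i+1}$, produces a forbidden copy of $Q$ there. The paper, however, does not argue via a descending chain of \emph{sets} $\mathcal{V}_i$ of admissible restrictions. Instead it records a single restriction $(C_1^i,C_2^i)$ per step (namely the restriction of the specific partition $(D_1^i,D_2^i)$ actually used to build $F_i$) and shows these are pairwise distinct; an injection from $\{0,\dots,s-1\}$ into the $2^{v(C(P\setminus v))}$ partitions of $V(C(P\setminus v))$ gives the stated bound directly.

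Your packaging has an off-by-one slip. A strictly decreasing chain $\mathcal{V}_0\supsetneq\mathcal{V}_1\supsetneq\cdots\supsetneq\mathcal{V}_{s-1}=\emptyset$ of subsets of a set of size $N=2^{v(C(P\setminus v))}$ only yields $|\mathcal{V}_0|\ge s-1$, hence $s\le N+1$, which is one worse than the claim. (Your witness $\rho=(C_1^{i+1},C_2^{i+1})$ shows $\rho\in\mathcal{V}_i\setminus\mathcal{V}_{i+1}$, but different steps could in principle produce the same witness, so you do not get injectivity of the witnesses for free.) This is harmless for the surrounding proof, since the recursion $M_k$ would absorb an extra $+1$, but if you want the exact statement you should argue as the paper does: track the single restricted partition per step and show any coincidence $(C_1^i,C_2^i)=(C_1^j,C_2^j)$ with $i<j$ leads to the same contradiction you outlined. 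Also, your justification for $\mathcal{V}_{i+1}\subseteq\mathcal{V}_i$ should simply be that $F_i\subseteq F_{i+1}$ as $r$-graphs and Lemma~\ref{lem:valid}(i); the remark that the $\times$-product ``only adds crossing edges between $D_1^{i+1}$ and $D_2^{i+1}$'' is neither needed nor accurate (new vertices are attached as well).
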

\begin{proof}
Note that
\begin{equation*}
C(P \setminus v) \subseteq F_0 \subseteq F_1 \subseteq F_2 \ldots.
\end{equation*}
We identify the vertices of these graphs in such a way that all the above inclusion maps are given by the identity. Therefore we can speak about "the" subgraph $C(P \setminus v)$ of $F_i$,
although $F_i$ may possibly contain other copies of $C(P \setminus v)$.

For each $i \geq 1$, set $C_1^i := D_1^{i} \cap V(C(P \setminus v))$ and $C_2^i := D_2^i \cap V(C(P \setminus v))$. Then by Lemma \ref{lem:valid}, (i), $(C_1^i, C_2^i)$ is a valid partition of $C(P \setminus v)$ w.r.t. $\mathcal{F}_\alpha(M)$ and $\mathcal{F}_\beta(M)$. Note that $(C_1^0, C_2^0)$ was already defined.

We claim that all the partitions $(C_1^i, C_2^i)$ must be distinct.

Indeed, suppose for a contradiction that there are $0 \leq i < j$ such that $(C_1^i, C_2^i) = (C_1^j, C_2^j)$.

By assumption, $D_1^j \supseteq C_1^j = C_1^i$ contains no copy of $P$. As $(C_1^i, C_2^i)$ is valid, it follows by (C) that $C_1^i$ and hence $D_1^j$ contains a copy $P'$ of $P \setminus v$. But $F_i \subseteq F_{j-1} = D_1^j \cup D_2^j$ contains the copy $Q_1^i$ of $Q \setminus w$. As any vertex $x$ of $Q_1^i$ together with $P'$ would induce the subgraph $P' \times x \supseteq P$, it follows that all vertices of $Q_1^i$ are part of $D_2^j$. A similar argument shows that $V(Q_2^i) \subseteq V(D_2^j)$. But $F_i$ also contains the copy $P_1^i$ of $P \setminus v$, and $P_1^i \cup \{z_i\}$ forms a copy of $P$. As $P \not\subseteq D_1^j$, for some $x \in V(P_1^i) \cup \{z_i\}$ we have $x \in D_2^j$.

If $x \neq z_i$, then $Q \subseteq x \times Q_1^i \subseteq D_2^j$. 

If $x = z_i$ then $x$ and $Q_2^i$ induce a copy of $Q$ in $D_2^j$.

Thus in any case $Q \subseteq D_2^j$. But this is a contradiction with the validity of the partition $(D_1^j, D_2^j)$.

Consequently all partitions $(C_1^i, C_2^i)$ of $C(P \setminus v)$ must be distinct. There are at most $2^{v(C(P \setminus v))}$ such partitions, completing the proof.
\end{proof}

Let $s$ be the length of the sequence. By Claim \ref{clm:finite}, $1 \leq s \leq 2^{v(C(P \setminus v))}\leq 2^{M_{v(P) - 1}}$.

By induction on $0 \leq i \leq s-1$, we see that $v(F_i) \leq v(C(P \setminus v))+(i+1)(2v(Q) - 2 + v(P))$. Therefore $v(C(P, Q, v, w)) \leq M_{v(P) - 1} + 2^{M_{v(P) - 1}}(2v(Q) - 2+v(P)) = M_{v(P)} = M_{P, Q, v, w}$. This proves (A).

Define $D_1^0 := C_1^0$ and $D_2^0 := C_2^0$. Note that $C(P, Q, v, w) = F_{s-1}$ contains a copy $K'$ of $K$ given by $P_1^{s-1}$, $Q_2^{s-1}$ and $z_{s-1}$, and $C(P, Q, v, w) \setminus K'$ has the partition $(D_1^{s-1}, D_2^{s-1} \dot\cup Q_1^{s-1})$.

It is easy to see that this partition is valid w.r.t. $\mathcal{F}_\alpha(M)$ and $\mathcal{F}_\beta(M)$, proving (B).

As the sequence stopped, any valid partition $(D_1, D_2)$ of $C(P, Q, v, w)$ w.r.t. $\mathcal{F}_\alpha(M)$ and $\mathcal{F}_\beta(M)$ must satisfy $P \subseteq D_1$. If $P$ is minimal invalid, this is a contradiction with Lemma \ref{lem:valid}, (ii) and (v), and therefore in this case no valid partition $(D_1, D_2)$ can exist. This proves (C), finishing the proof of the lemma.
\end{proof}

\subsection{\normalsize The Collapsing Lemma}

We continue to follow the strategy of Pikhurko from \cite{pikhurko12}. The next step is to prove a \textit{collapsing lemma}. The general idea is to show that under certain circumstances two $r$-graphs which are extremal and "close" to one another must in fact be isomorphic (thus the extremal structure "collapses" onto a predefined pattern). This technique goes back to the work of Simonovits in the 60s \cite{simonovits68} and was later developed as a tool for the exact determination of Tur\'an densities.

To the best of our knowledge, all the previous applications of this method considered that one of the two $r$-graphs is a blow-up or an iterated blow-up structure. In such a situation many nice properties are available, most importantly, the addition of any new edge to this graph creates $\Omega(n^{v(F) - r})$ copies of some forbidden graph $F$. Thus even a small, local modification requires the deletion of many edges to maintain the property of being $F$-free, in particular the resulting graph can not be "close" to the initial one, and hence if it is "too close", it must be equal.

However, this is too much to expect in our present situation; for an arbitrary Tur\'an density there is no structure to use, and there is no evidence that adding an edge to an extremal $r$-graph would create many copies of some forbidden subgraph.

Nevertheless in some cases local modifications create many forbidden $r$-graphs; this can be read out of the $\pi$ function, and it was our goal in the previous section to extract this information. We shall use it to show that the number of edges in an extremal $r$-graph is suitably bounded by the function $\oplus_r$.

First we need a couple of definitions.

Let $G$ and $H$ be two $r$-graphs with the same number $n$ of vertices. For $\eps > 0$, we say that $G$ and $H$ are $\eps$-close if $G$ is isomorphic to an $r$-graph $G'$ on $V(H)$ such that $|E(G') \Delta E(H)| \leq \eps\binom{n}{r}$. In other words, we can obtain $H$ from $G$ by adding or deleting at most $\eps\binom{n}{r}$ edges.

A family of $r$-graphs $\mathcal{F}$ is called \textit{minimal} if it is weakly closed under homomorphisms and any $r$-graph $F \in \mathcal{F}$ is minimal invalid w.r.t. $\mathcal{F}$.

\begin{lemma}
\label{lem:minimal}
For any finite family of $r$-graphs $\mathcal{F}$ there exists a finite minimal family of $r$-graphs $\mathcal{F}'$ with $\pi(\mathcal{F}') = \pi(\mathcal{F})$.
\end{lemma}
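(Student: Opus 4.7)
The approach is to realise $\mathcal{F}'$ as a minimum in a well-founded order among families that already have the right value of $\pi$ and are weakly closed under homomorphisms, then use minimality to force every element to be minimal invalid.

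First I would verify that the collection $\mathbf{F}$ of finite families $\mathcal{G}$ of $r$-graphs satisfying $\pi(\mathcal{G}) = \pi(\mathcal{F})$ and weak closure under homomorphisms is nonempty. The natural candidate is $\overline{\mathcal{F}}$. Since every $F \in \mathcal{F}$ has only finitely many surjective homomorphic images (each on at most $v(F)$ vertices), $\overline{\mathcal{F}}$ is finite; composition of surjective homomorphisms being surjective gives $\overline{\overline{\mathcal{F}}} = \overline{\mathcal{F}}$, so weak closure is immediate. The only non-trivial point is $\pi(\overline{\mathcal{F}}) = \pi(\mathcal{F})$: if $G$ is a surjective homomorphic image of $F \in \mathcal{F}$ via $f$, edge-injectivity forces $F \subseteq G(t)$ for $t \geq \max_v |f^{-1}(v)|$; adding the homomorphic images $G_1, G_2, \ldots$ to $\mathcal{F}$ one at a time, each step using Lemma~\ref{lem:piprop}(iv) to replace the newest $G_i$ by $G_i(t_i)$ and observing that its preimage $F_i \in \mathcal{F}$ makes $G_i(t_i)$ redundant by subgraph containment, yields $\pi$-invariance.

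Next, I would order $\mathbf{F}$ by the multiset order on $\{v(G) : G \in \mathcal{G}\}$, or equivalently compare signatures $(N_v(\mathcal{G}))_v$ lexicographically from the largest $v$ downward, where $N_v = |\{G \in \mathcal{G} : v(G) = v\}|$. This order is well-founded, so a minimum $\mathcal{F}' \in \mathbf{F}$ exists. The case $\pi(\mathcal{F}) = 0$ is easy (one checks $\mathcal{F}'$ is forced to be a single graph on $r$ vertices with one edge), so assume $\pi(\mathcal{F}) > 0$. Every $F \in \mathcal{F}'$ then has at least one edge (otherwise $\pi(\mathcal{F}') = 0$), so $\lambda(F) > 0$ and $F$ is invalid w.r.t. $\mathcal{F}'$. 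Suppose for contradiction some $F \in \mathcal{F}'$ is not minimal invalid; pick $x \in V(F)$ with $F \setminus x$ also invalid w.r.t. $\mathcal{F}'$, and set $\mathcal{F}''' := (\mathcal{F}' \setminus \{F\}) \cup \overline{\{F \setminus x\}}$.

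Three claims about $\mathcal{F}'''$ then complete the proof. \textbf{(a) Weak closure:} for $G \in \overline{\mathcal{F}'''}$, either $G$ is a homomorphic image of something in $\mathcal{F}' \setminus \{F\}$, in which case weak closure of $\mathcal{F}'$ yields a subgraph $F_0 \in \mathcal{F}'$ of $G$ (take $F_0$ itself if $F_0 \neq F$; if $F_0 = F$, use $F \setminus x \subseteq F \subseteq G$ with $F \setminus x \in \mathcal{F}'''$), or $G$ is a homomorphic image of something in $\overline{\{F \setminus x\}}$, in which case $G \in \overline{\{F \setminus x\}} \subseteq \mathcal{F}'''$ by composition. \textbf{(b) $\pi$-invariance:} the inequality $\leq$ is immediate because $\mathcal{F}'''$-free implies $F \setminus x$-free implies $F$-free; for $\geq$ observe $\mathcal{F}''' \cup \{F\} = \mathcal{F}' \cup \overline{\{F \setminus x\}}$ and apply the same iterated Lemma~\ref{lem:piprop}(iv) trick (with $F \setminus x \subseteq G(t)$ for each $G \in \overline{\{F \setminus x\}}$) to reduce the right-hand $\pi$ to $\pi(\mathcal{F}' \cup \{F \setminus x\}) = \pi(\mathcal{F}')$ via the invalidity of $F \setminus x$. \textbf{(c) Strict decrease:} every graph in $\overline{\{F \setminus x\}}$ has at most $v(F) - 1$ vertices, so $N_{v(F)}$ strictly decreases while $N_v$ for $v > v(F)$ is unchanged, giving $\mathcal{F}''' \prec \mathcal{F}'$ and contradicting minimality. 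The main obstacle is the joint $\pi$-invariance in (b): single-graph invalidity of $F \setminus x$ does not a priori yield invariance when one throws in the entire homomorphism closure $\overline{\{F \setminus x\}}$, and getting past this requires the same iterated Lemma~\ref{lem:piprop}(iv) blow-up used in the first step. The weak-closure subtlety in (a) is a simpler, secondary point where the subgraph relation $F \setminus x \subseteq F$ is exactly what is needed to handle the case that $\mathcal{F}'$'s weak-closure witness happens to be $F$.
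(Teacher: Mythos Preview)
Your argument is correct in the main case $\pi(\mathcal{F}) > 0$, and it reaches the same destination as the paper by a somewhat different route. The paper constructs $\mathcal{F}'$ explicitly by iterating three operations: (O1) adjoin every surjective homomorphic image of a member, (O2) adjoin every proper edge-containing subgraph $F' \subsetneq F$ that preserves $\pi$, and finally (O3) delete any $F$ that properly contains another member; termination is argued operation by operation. You instead posit the whole class $\mathbf{F}$ of admissible families at once and extract a minimal element under the multiset order on vertex sizes, then derive minimal invalidity of every member by the replacement $F \mapsto \overline{\{F \setminus x\}}$. The underlying mechanics are the same --- both proofs rely on Lemma~\ref{lem:piprop}(iv) to show that adjoining homomorphic images preserves $\pi$, and both replace a non-minimal-invalid $F$ by an invalid proper subgraph --- but your packaging is arguably cleaner, since well-foundedness of the multiset order handles termination in one stroke rather than requiring separate arguments that (O1)--(O3) each halt and interact correctly.

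One small slip: in the case $\pi(\mathcal{F}) = 0$ your claim that the minimum of $\mathbf{F}$ ``is forced to be a single graph on $r$ vertices with one edge'' is not right as stated. The family $\{I_1\}$ consisting of a single isolated vertex has $\pi(\{I_1\}) = 0$, is (vacuously) weakly closed, and has strictly smaller signature than $\{e\}$ in your order; yet $I_1$ is valid (since $\lambda(I_1)=0=\pi$), so $\{I_1\}$ is not a minimal family in the sense required. The fix is painless: either handle $\pi(\mathcal{F}) = 0$ by directly returning $\{e\}$ and checking it is minimal, or restrict $\mathbf{F}$ from the outset to families all of whose members have at least one edge. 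The latter is automatic when $\pi>0$, the restricted $\mathbf{F}$ is still nonempty when $\pi=0$ (witnessed by $\{e\}$), and it is preserved by your replacement step because $F\setminus x$, being invalid, has $\lambda>0$ and hence an edge, so every member of $\overline{\{F\setminus x\}}$ does too.
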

\begin{proof}
Clearly we may assume that each $r$-graph in $\mathcal{F}$ has at least one edge.

We shall repeatedly apply one of the following operations to $\mathcal{F}$.

(O1) If $F \in \mathcal{F}, f : F \rightarrow F'$ is a surjective homomorphism and $F' \notin \mathcal{F}$, then add $F'$ to $\mathcal{F}$.

(O2) If $F \in \mathcal{F}$, $F' \notin \mathcal{F}$ is a proper subgraph of $F$ with at least one edge and $\pi(\mathcal{F} \cup \{F'\}) = \pi(\mathcal{F})$, then add $F'$ to $\mathcal{F}$.

(O3) If $F' \subsetneq F$ and $F', F \in \mathcal{F}$ then remove $F$ from $\mathcal{F}$.

We start by applying (O1) and (O2) until none of these operations can be applied anymore, and then we apply (O3) as many times as possible. As (O1), (O2) and (O3) can each be applied only a finite number of times, we obtain a finite family $\mathcal{F}'$ of $r$-graphs.

We first claim that $\pi(\mathcal{F}') = \pi(\mathcal{F})$ and that any element of $\mathcal{F}'$ has at least one edge. To prove this we examine each operation separatedly.

Consider (O1). Let $F \in \mathcal{F}$ and suppose that there exists $F'$ and a surjective homomorphism from $F$ to $F'$ such that $F' \notin \mathcal{F}$. Then for some $t \geq 1$, $F \subseteq F'(t)$. Hence by Lemma \ref{lem:piprop}, (i), (ii) and (iv),
\begin{equation*}
\pi(\mathcal{F}) \geq \pi(\mathcal{F} \cup \{F'\}) = \pi(\mathcal{F} \cup \{F'(t)\}) \geq \pi(\mathcal{F} \cup \{F\}) = \pi(\mathcal{F}),
\end{equation*}
proving that $\pi(\mathcal{F} \cup \{F'\}) = \pi(\mathcal{F})$. Thus we can add $F'$ to $\mathcal{F}$ without changing the Tur\'an density. Moreover $F'$ has at least one edge, as it contains a homomorphic image of $F$.

Now consider (O2). Let $F \in \mathcal{F}$ and $F' \notin \mathcal{F}$ be a proper subgraph of $F$ with at least one edge, such that $\pi(\mathcal{F} \cup \{F'\}) = \pi(\mathcal{F})$. Then we can add $F'$ to $\mathcal{F}$, and this does not change the Tur\'an density.

Finally, by Lemma \ref{lem:piprop}, (i) and (ii), whenever $F' \subsetneq F$ and $F', F \in \mathcal{F}$, we can remove $F$ from $\mathcal{F}$ without changing the Tur\'an density.

Consequently it follows by induction that $\pi(\mathcal{F}') = \pi(\mathcal{F})$ and any element of $\mathcal{F}'$ has at least one edge. In particular, any $F \in \mathcal{F}'$ is invalid w.r.t. $\mathcal{F}'$.

We now claim that any $r$-graph in $\mathcal{F}'$ is also minimal invalid with respect to it. Indeed, suppose for a contradiction that there exists $F \in \mathcal{F}'$ and $x \in V(F)$ such that $F \setminus x$ is invalid w.r.t. $\mathcal{F}'$. Then $F \setminus x$ has at least one edge and $\pi(\mathcal{F}' \cup \{F \setminus x\}) = \pi(\mathcal{F}') = \pi(\mathcal{F})$. By (O2), $F \setminus x \in \mathcal{F}$ before the first application of (O3). But then (O3) forces the removal of $F$ from $\mathcal{F}$, a contradiction.

Consequently any $r$-graph $F \in \mathcal{F}'$ is minimal invalid w.r.t. $\mathcal{F}'$.

Finally, we claim $\mathcal{F}'$ is weakly closed under homomorphisms. Indeed, let $F \in \mathcal{F}'$ and assume $f : F \rightarrow F'$ is a surjective homomorphism. Then $F'$ was added to $\mathcal{F}$ by (O1) if it was not already present in $\mathcal{F}$. If $F' \notin \mathcal{F}'$, then it must have been removed by (O3). Consequently there exists $F'' \subsetneq F'$ such that $F'' \in \mathcal{F}'$. This shows that $\mathcal{F}'$ is weakly closed under homomorphisms.
\end{proof}

Note that if $\mathcal{F}$ is minimal and $\pi(\mathcal{F}) = 0$, then $\mathcal{F} = \{e\}$, where $e$ is the $r$-edge.

\begin{lemma}[The Collapsing Lemma]
\label{lem:collapse}
Let $\mathcal{F}_\alpha$ and $\mathcal{F}_\beta$ be two non-empty finite minimal families of $r$-graphs with $\pi(\mathcal{F}_\alpha) = \alpha$ and $\pi(\mathcal{F}_\beta) = \beta$.

Then there exists an $M_{\alpha, \beta} > 0$ such that for any $M \geq M_{\alpha, \beta}$ and any $M$-closures $\mathcal{F}_{\alpha}(M)$ and $\mathcal{F}_{\beta}(M)$ the following holds.

There exist a finite family of $r$-graphs $\mathcal{F}_{\alpha, \beta}$ and an $\eps > 0$ with the following properties.
\begin{itemize}
\item If $H_1$ is an $\mathcal{F}_\alpha(M)$-free $r$-graph and $H_2$ is an $\mathcal{F}_\beta(M)$-free $r$-graph then $H_1 \times H_2$ is $\mathcal{F}_{\alpha, \beta}$-free.
\item Furthermore for any $\zeta > 0$ there exists an $n_0 \geq 1$ such that any maximum $\mathcal{F}_{\alpha, \beta}$-free $r$-graph $G$ on $n \geq n_0$ vertices which is $\eps$-close to an $r$-graph of the form $H_1 \times H_2$, with $H_1$ an $\mathcal{F}_\alpha(M)$-free $r$-graph, and $H_2$ an $\mathcal{F}_\beta(M)$-free $r$-graph, has at most
\begin{equation}
\label{eq:n_edges}
(\alpha \oplus_r \beta + \zeta)\binom{n}{r}
\end{equation}
edges.
\end{itemize}
\end{lemma}
\begin{proof}
For any pair $(F_\alpha, F_\beta) \in \mathcal{F}_\alpha \times \mathcal{F}_\beta$, and any choice of $v \in V(F_\alpha), w \in V(F_\beta)$, the Rigidity Lemma gives us a positive integer $M_{F_\alpha, F_\beta, v, w}$. We set $M_{\alpha, \beta}$ to be the maximum of these values.

Now let $M \geq M_{\alpha, \beta}$ be arbitrary and fix arbitrary $M$-closures $\mathcal{F}_{\alpha}(M)$ and $\mathcal{F}_{\beta}(M)$.

For any pair $(F_\alpha, F_\beta) \in \mathcal{F}_\alpha \times \mathcal{F}_\beta$, and any choice of $v \in V(F_\alpha), w \in V(F_\beta)$, the Rigidity Lemma now gives us an $r$-graph $C(F_\alpha, F_\beta, v, w)$. We set
\begin{equation*}
\mathcal{F}_{\alpha, \beta}^{*} := \{C(F_\alpha, F_\beta, v, w) : F_\alpha \in \mathcal{F}_\alpha, F_\beta \in \mathcal{F}_\beta, v \in V(F_\alpha), w \in V(F_\beta)\}
\end{equation*}
and let $\mathcal{F}_{\alpha, \beta}$ be the closure of $\mathcal{F}_{\alpha, \beta}^{*}$ under homomorphisms.

For each $F_\alpha \in \mathcal{F}_\alpha$, choose an arbitrary $F_\beta \in \mathcal{F}_\beta$ along with an arbitrary $v \in  V(F_\alpha)$ and $w \in V(F_\beta)$. With these choices, define the $r$-graph $C(F_\alpha) := C(F_\alpha, F_\beta, v, w)$. The exact choices we make are irrelevant for the argument to follow. Similarly for any $F_\beta \in \mathcal{F}_\beta$, choose $F_\alpha \in \mathcal{F}_\alpha, v \in  V(F_\alpha)$ and $w \in V(F_\beta)$ arbitrary, and define the $r$-graph $C(F_\beta) := C(F_\alpha, F_\beta, v, w)$.

Before we go any further we establish the first part of the lemma.

\begin{claim}
\label{clm:free}
If $H_1$ is an $\mathcal{F}_\alpha(M)$-free $r$-graph and $H_2$ is an $\mathcal{F}_\beta(M)$-free $r$-graph then $H := H_1 \times H_2$ is $\mathcal{F}_{\alpha, \beta}$-free.
\end{claim}
\begin{proof}
Let $P \in \mathcal{F}_\alpha, Q \in \mathcal{F}_\beta$ and $v \in V(P), w \in V(Q)$. Let $f : C(P, Q, v, w) \rightarrow F$ be any surjective homomorphism. We show $H$ is $F$-free.

Suppose for a contradiction that $F$ embeds into $H = H_1 \times H_2$. Then there exists a partition $(T_1, T_2)$ of $C(P, Q, v, w)$ such that $f(T_1) \subseteq H_1$ and $f(T_2) \subseteq H_2$. By Lemma \ref{lem:rigid}, (C), this partition is not valid. Thus w.l.o.g. we may assume that $T_1$ is not valid w.r.t. $\mathcal{F}_\alpha(M)$.

Let $t \geq 1$ such that $T_1 \subseteq f(T_1)(t)$.

If $\alpha > 0$ then by Lemma \ref{lem:piprop}, (iv) and (ii),
\begin{equation*}
\pi(\mathcal{F}_\alpha(M) \cup \{f(T_1)\}) = \pi(\mathcal{F}_\alpha(M) \cup \{f(T_1)(t)\}) \geq \pi(\mathcal{F}_\alpha(M) \cup \{T_1\}) = \pi(\mathcal{F}_\alpha(M)).
\end{equation*}
Consequently $\pi(\mathcal{F}_\alpha(M) \cup \{f(T_1)\}) = \pi(\mathcal{F}_\alpha(M))$. However, $v(f(T_1)) \leq v(T_1) \leq v(C(P, Q, v, w)) \leq M_{\alpha, \beta} \leq M$ by definition. As $\alpha > 0$, $T_1$ and hence $f(T_1)$ certainly contains at least one edge. Thus by maximality, $f(T_1) \in \mathcal{F}_{\alpha}(M)$. This contradicts the fact that $H_1$ is $\mathcal{F}_\alpha(M)$-free.

Hence $\alpha = 0$. But then $\lambda(H_1) \geq \lambda(f(T_1)) \geq \lambda(T_1) > 0$ by Lemma \ref{lem:lambdaprop}, (ii). Consequently $H_1$ has at least one edge. But $\mathcal{F}_\alpha$ contains the one edge $r$-graph, again contradicting the fact that $H_1$ is $\mathcal{F}_\alpha$-free. This proves the claim.
\end{proof}

We now prove the second part of the Collapsing Lemma. Define
\begin{equation*}
\eps_0 := \frac{1}{2}\min\{\alpha - \theta(\mathcal{F}_\alpha(M)), \beta  - \theta(\mathcal{F}_\beta(M))\},
\end{equation*}
where recall that $\theta(\mathcal{F}(M))$ is the threshold of the $M$-closure $\mathcal{F}(M)$. By definition, $\theta(\mathcal{F}_{\alpha}(M)) = -1$ if $\alpha = 0$, and similarly $\theta(\mathcal{F}_{\beta}(M)) = -1$ if $\beta = 0$. Hence we always have $\eps_0 > 0$.

Then define
\begin{align*}
\delta &:= \min\{\delta(\eps_0, \mathcal{F}_\alpha(M)), \delta(\eps_0, \mathcal{F}_\beta(M))\},\\
n^* &:= \max\{n^*(\eps_0, \mathcal{F}_\alpha(M)), n^*(\eps_0, \mathcal{F}_\beta(M))\}.
\end{align*}

We are now going to define several other constants. Rather than giving a precise definition, we shall list a (admitedly long) list of inequalities they have to satisfy, and it will be obvious from this list that a choice satisfying all the given inequalities can be made. The reader may safely skip this part of the proof and return later when needed.

Note that $\alpha < 1$ and $\beta < 1$ by Lemma \ref{lem:piprop}, (v), as $\mathcal{F}_\alpha$ and $\mathcal{F}_\beta$ are both non-empty.

Moreover, recall that in Lemma \ref{lem:max2} we defined
\begin{equation*}
\mathfrak{g}_{\alpha, \beta}(x) = \alpha x^r + \beta (1-x)^r + r!\sum_{j=1}^{r-1}\frac{x^j(1-x)^{r-j}}{j!(r-j)!}
\end{equation*}
and $x_{\alpha, \beta} = \frac{\sqrt[r-1]{1 - \beta}}{\sqrt[r-1]{1-\alpha} + \sqrt[r-1]{1-\beta}}$.

Recall also the definition of $\tau$ in Lemma \ref{lem:add_vertex}.

Now choose constants
\begin{equation*}
0 < \eps \ll c_1 \ll c_2 \ll c_3 \ll c_4 \ll c_5 \ll c_6
\end{equation*}
such that the following conditions hold:
\begin{align}
\eps &< \min\left\{\frac{1-\beta}{3}\left(1 - \frac{1}{(1+\sqrt[r-1]{1-\beta})^{r-1}}\right),
\frac{1-\alpha}{3}\left(1 - \frac{1}{(1+\sqrt[r-1]{1-\alpha})^{r-1}}\right)\right\},\label{eq:eps1}\\%
c_1 &< \min\{x_{\alpha, \beta}, 1 - x_{\alpha, \beta}\},\\%
\eps &< \frac{1}{4}\left(\alpha \oplus_r \beta - \max\{\mathfrak{g}_{\alpha, \beta}(x_{\alpha, \beta} - c_1), \mathfrak{g}_{\alpha, \beta}(x_{\alpha, \beta} + c_1)\}\right),\label{eq:eps2}\\%
\eps &< \frac{c_2}{3}\min\left\{\left(\frac{\sqrt[r-1]{1-\alpha}}{1+\sqrt[r-1]{1-\alpha}}\right)^r, 
\left(\frac{\sqrt[r-1]{1-\beta}}{1+\sqrt[r-1]{1-\beta}}\right)^r\right\},\label{eq:eps3}\\%
\eps^{1/2} &< c_3r \min\{(1-x_{\alpha,\beta}-2c_1)^{r-1}(x_{\alpha,\beta}-c_1), (x_{\alpha,\beta}-2c_1)^{r-1}(1-x_{\alpha,\beta}-c_1)\},\label{eq:eps4}\\%
\eps &< r!\frac{\delta^2}{8(r+1)}(1-c_3)^{M_{\alpha, \beta}}(x_{\alpha,\beta} - c_1)^r(1-x_{\alpha,\beta}-c_1)^r,\label{eq:eps5}\\%
c_6 &< (r-1)!\frac{\delta^2}{16M_{\alpha, \beta}}(1-c_3)^{M_{\alpha, \beta}},\label{eq:eps6}\\
\eps^{1/2} + c_3r &< c_6,\label{eq:eps7}\\%
c_5 &> c_3r + c_4 + (4^r+2^{r-1}+2)c_1,\label{eq:c5}\\%
c_6 &> \frac{(x_{\alpha,\beta}+c_1)^{r-1} - x_{\alpha, \beta}^{r-1} + c_5}{(x_{\alpha, \beta}+c_1)^{r-1}},\label{eq:c6}\\%
c_2+c_3r &< \min\{\alpha - \theta(\mathcal{F}_{\alpha}(M)) - \eps_0, \beta - \theta(\mathcal{F}_\beta(M)) - \eps_0\},\label{eq:c231}\\%
c_2+c_3r &< \min\{\tau(\mathcal{F}_\alpha, c_4), \tau(\mathcal{F}_\beta, c_4)\}\label{eq:c232}.
\end{align}

In order for this system of inequalities to have a solution, it is enough if the following condition holds. For each inequality, the smaller quantity tends to zero when all the unknowns appearing in it tend to zero, and any unknown $c_i$ appearing in the greater quantity has index strictly larger than any unknown $c_j$ appearing in the smaller quantity. The only problematic inequality is \eqref{eq:eps6}; however after a rearrangement it can be seen that it also satisfies this condition.

Furthermore recall that if $\alpha = 0$ then $\theta(\mathcal{F}_\alpha(M)) = -1$, and hence if $\alpha = \beta = 0$ then $\eps_0 = \frac{1}{2}$, and so the right-hand side of \eqref{eq:c231} is in this case equal to $\frac{1}{2}$.

Now let $\zeta > 0$ be arbitrary. We also choose constants 
\begin{equation*}
n^{*} \ll n_1  \ll n_2 \ll n_0
\end{equation*}
in the following way.

Recall the definition of $n_C$ in Lemma \ref{lem:remset}. We require that
\begin{equation}
\label{eq:n_1C3}
n_1 \geq n_C(\delta, \max\{v(F_\alpha), v(F_\beta)\}, v(C(F_\alpha, F_\beta, v, w))),
\end{equation}
for any choice of $F_\alpha \in \mathcal{F}_\alpha, F_\beta \in \mathcal{F}_\beta, v \in V(F_\alpha)$ and $w \in V(F_\beta)$.

Recall the definition of $n_V$ in Lemma \ref{lem:add_vertex}. We require
\begin{equation}
\label{eq:n_1V}
n_1 \geq \max\left\{n_V(\mathcal{F}_\alpha, c_4), n_V(\mathcal{F}_\beta, c_4)\right\},
\end{equation}
and that for any $n \geq n_1$, $\ex(n, \mathcal{F}_\alpha) < (\alpha + \frac{\zeta}{4})\binom{n}{r}$ and $\ex(n, \mathcal{F}_\beta) < (\beta + \frac{\zeta}{4})\binom{n}{r}$.

Once $n_1$ is fixed, we choose $n_2$ such that
\begin{equation}
\label{eq:n_21}
n_2 \geq \frac{n_1}{1 - c_3},
\end{equation}
and such that for any $n \geq n_2$, $\ex(n, \mathcal{F}_\alpha) < (\alpha + \eps)\binom{n}{r}$ and $\ex(n, \mathcal{F}_\beta) < (\beta + \eps)\binom{n}{r}$.

Finally, we choose $n_0$ such that
\begin{equation}
\label{eq:n_01}
n_0 \geq \max\left\{\frac{n_2 r}{\eps}, \frac{r}{c_1}, n_D(\mathcal{F}_{\alpha, \beta}, c_1)\right\},
\end{equation}
where $n_D$ is given by Lemma \ref{lem:min_degree} (the assumptions of the lemma are satisfied because $\mathcal{F}_{\alpha, \beta}$ is closed under homomorphisms), and such that
\begin{equation}
\label{eq:n_03}
\prod_{i=1}^{r-1}\left(1 - \frac{i}{n_0}\right) \geq \max\left\{ \frac{\alpha \oplus_r \beta - 2c_1}{\alpha \oplus_r \beta - c_1}, \frac{\alpha \oplus_r \beta - \eps}{\alpha \oplus_r \beta}, \frac{\alpha \oplus_r \beta + \frac{\zeta}{2}}{\alpha \oplus_r \beta + \zeta}\right\}.
\end{equation}
Note the dependency of $n_0$ and $n_1$ on $\zeta$.

For the rest of the proof we shall assume that $\pi(\mathcal{F}_{\alpha, \beta}) \geq \alpha \oplus_r \beta$, otherwise we can choose $n_0$ large enough so that the lemma trivially holds.

Finally, let $G$ be any maximum $\mathcal{F}_{\alpha, \beta}$-free $r$-graph $G$ on $n \geq n_0$ vertices and suppose $G$ is $\eps$-close to an $r$-graph of the form $H := H_1 \times H_2$, with $H_1$ an $\mathcal{F}_\alpha(M)$-free $r$-graph, and $H_2$ an $\mathcal{F}_\beta(M)$-free $r$-graph. We shall assume for a contradiction that $G$ has more than $(\alpha \oplus_r \beta + \zeta)\binom{n}{r}$ edges.

We identify the vertex set of $G$ with that of $H$ in such a way that $|E(G) \Delta E(H)| \leq \eps\binom{n}{r}$. Let $(G_1, G_2)$ be the partition of $G$ so that $V(G_1) = V(H_1)$ and $V(G_2) = V(H_2)$.

The proof now begins in earnest. We start with the following claim.

\begin{claim}
\label{clm:clm1}
$G_1$ and $G_2$ have each at least $n_2$ vertices.
\end{claim}
\begin{proof}
We only prove that $v(G_1) \geq n_2$, as the other statement is proved similarly.

Assume for a contradiction that this is not the case. By \eqref{eq:n_01}, $n_0 \geq 2n_2$, in particular $v(G_2) \geq n_2$.

Hence
\begin{align*}
e(G) &\leq e(G_2) + n_2\binom{n-1}{r-1}\\
&\leq e(H_2) + n_2\binom{n-1}{r-1} + \eps\binom{n}{r}\\
&\leq (\beta + 3\eps)\binom{n}{r}, \textrm{ by our choice of $n_2$ and \eqref{eq:n_01}},\\
&< (\alpha \oplus_r \beta)\binom{n}{r},
\end{align*}
where the last inequality follows from \eqref{eq:eps1} and the inequality $\alpha \oplus_r \beta \geq 0 \oplus_r \beta = 1 - \frac{1-\beta}{(1+\sqrt[r-1]{1-\beta})^{r-1}}$.

This is a contradiction, proving the claim.
\end{proof}

Now set $a := v(G_1)$ and $b := v(G_2)$. Note that $a+b = n$.

\begin{claim}
\label{clm:clm2}
$\frac{a}{n} \in (x_{\alpha, \beta} - c_1, x_{\alpha, \beta} + c_1)$ and $\frac{b}{n} \in (1-x_{\alpha,\beta} - c_1, 1 - x_{\alpha, \beta} + c_1)$.
\end{claim}
\begin{proof}
We only prove that $\frac{a}{n} \in (x_{\alpha, \beta} - c_1, x_{\alpha, \beta} + c_1)$, as the other statement follows from this one.

Suppose for a contradiction that this is not the case.

Note that
\begin{align*}
e(G) &\leq e(H_1) + e(H_2) + \sum_{i=1}^{r-1} \binom{a}{i}\binom{b}{r-i} + \eps\binom{n}{r}\\
&\leq (\alpha + \eps)\binom{a}{r} + (\beta+\eps)\binom{b}{r} + \sum_{i=1}^{r-1}\binom{a}{i}\binom{b}{r-i} + \eps\binom{n}{r}, \textrm{by Claim \ref{clm:clm1} and our choice of $n_2$},\\
&\leq \alpha\frac{a^r}{r!} + \beta\frac{b^r}{r!} + \sum_{i=1}^{r-1} \frac{a^ib^{r-i}}{i!(r-i)!}+3\eps \frac{n^r}{r!}\\
&=(\mathfrak{g}_{\alpha, \beta}(\frac{a}{n}) + 3\eps)\frac{n^r}{r!}.
\end{align*}
As $\mathfrak{g}_{\alpha, \beta}$ is strictly concave, it follows by \eqref{eq:eps2} that
\begin{equation*}
e(G) \leq (\alpha \oplus_r \beta - \eps)\frac{n^r}{r!} \leByRef{eq:n_03} (\alpha \oplus_r \beta)\binom{n}{r},
\end{equation*}
a contradiction.
\end{proof}

\begin{claim}
\label{clm:clm3}
$G_1$ has density at least $\alpha - c_2$ and $G_2$ has density at least $\beta - c_2$.
\end{claim}
\begin{proof}
We only prove that $d(G_1) \geq \alpha - c_2$, as the other statement is proved similarly.

Assume for a contradiction that this is not the case.

As in the previous claim, note that
\begin{align*}
e(G) &\leq d(G_1)\frac{a^r}{r!} + \beta\frac{b^r}{r!} + \sum_{i=1}^{r-1} \frac{a!b!}{i!(r-i)!} + 2\eps \frac{n^r}{r!}\\
&= (\mathfrak{g}_{d(G_1), \beta}(\frac{a}{n}) + 2\eps) \frac{n^r}{r!}\\
&\leq (d(G_1) \oplus_r \beta + 2\eps)\frac{n^r}{r!}, \textrm{by Lemma \ref{lem:max2}},\\
&\leq ((\alpha - c_2)\oplus_r \beta+2\eps)\frac{n^r}{r!}\\
&\leq \left(\alpha \oplus_r \beta + 2\eps - c_2\left(\frac{\sqrt[r-1]{1-\beta}}{1+\sqrt[r-1]{1-\beta}}\right)^r\right)\frac{n^r}{r!}, \textrm{by Lemma \ref{lem:monoton}},\\
&\leByRef{eq:eps3} (\alpha \oplus_r \beta - \eps)\frac{n^r}{r!}\\
&\leByRef{eq:n_03} (\alpha \oplus_r \beta)\binom{n}{r},
\end{align*}
a contradiction.
\end{proof}

In fact, the same proof shows that $d(H_1) \geq \alpha -c_2$ and $d(H_2) \geq \beta - c_2$.

Define $S_1 := \{ x \in V(G_1) : d_{G_2}(x) \leq (1 - \eps^{1/2})\binom{b}{r-1}\}$ and $S_2 := \{ x \in V(G_2) : d_{G_1}(x) \leq (1 - \eps^{1/2})\binom{a}{r-1}\}$.

\begin{claim}
\label{clm:clm4}
$|S_1| \leq c_3a$ and $|S_2| \leq c_3 b$.
\end{claim}
\begin{proof}
We only prove that $|S_1| \leq c_3a$, as the other statement is proved similarly.

Note that $\eps\binom{n}{r} \geq |E(H) \setminus E(G)| \geq |S_1|\eps^{1/2}\binom{b}{r-1}$, hence
\begin{align*}
|S_1| &\leq \eps^{1/2} \binom{n}{r}\binom{b}{r-1}^{-1}\\
&\leq \eps^{1/2}\frac{n^r}{r(b-r)^{r-1}}\\
&\leq \eps^{1/2}\frac{n}{r(1 - x_{\alpha, \beta} - c_1 -\frac{r}{n})^{r-1}}, \textrm{ by Claim \ref{clm:clm2}},\\
&\leByRef{eq:n_01} \eps^{1/2} \frac{n}{r(1-x_{\alpha, \beta}-2c_1)^{r-1}}\\
&\leByRef{eq:eps4} c_3(x_{\alpha, \beta} - c_1)n\\
&\leq c_3a, \textrm{ by Claim \ref{clm:clm2}}.
\end{align*}
\end{proof}

Define $G_1' := G_1 \setminus S_1$ and $G_2' := G_2 \setminus S_2$. Furthermore set $H_1' := H_1 \setminus S_1$ and $H_2' := H_2 \setminus S_2$. By \eqref{eq:n_21}, $G_1'$ and $G_2'$ have each at least $n_1$ vertices. Moreover,
\begin{equation}
\label{eq:density1}
d(H_1') \geq d(H_1) - c_3r \geq \alpha - c_2 - c_3r \gByRef{eq:c231} \theta(\mathcal{F}_\alpha(M)) + \eps_0,
\end{equation}
and similarly
\begin{equation}
\label{eq:density2}
d(H_2') \geq \beta - c_2 - c_3r \gByRef{eq:c231} \theta(\mathcal{F}_\beta(M)) + \eps_0.
\end{equation}

\begin{claim}
\label{clm:clm5}
Let $V(G_1') \subseteq U$ and $V(G_2') \subseteq W$ be disjoint sets of vertices in $G$ with the following property: $d_{G_2'}(x) \geq (1 - c_6)\binom{b}{r-1}$ for any $x \in U$, and $d_{G_1'}(x) \geq (1 - c_6)\binom{a}{r-1}$ for any $x \in W$. Then $G[U]$ is $\mathcal{F}_\alpha$-free and $G[W]$ is $\mathcal{F}_\beta$-free.
\end{claim}
\begin{proof}
We only prove that $G[U]$ is $\mathcal{F}_\alpha$-free, as the other statement is proved in a similar way.

Suppose for a contradiction that for some $F \in \mathcal{F}_\alpha$, there is a copy of $F$ inside $G[U]$, which we also denote by $F$.

Recall that by the Addendum to the Rigidity Lemma, (b), $C(F)$ contains a copy $K$ of $F$ such that $C(F) \setminus K$ has a partition $(C_1, C_2)$ valid w.r.t. $\mathcal{F}_\alpha(M)$ and $\mathcal{F}_\beta(M)$. Either of $C_1$ or $C_2$ can be empty, but not both, as then $C(F) \simeq F$, contradicting the fact that $G$ is $\mathcal{F}_{\alpha, \beta}$-free. By our choice of $\delta$, \eqref{eq:density1}, \eqref{eq:density2} and the fact that $n_1 \geq n^{*}$, there are at least $\delta v(H_1')^{v(C_1)}$ copies of $C_1$ in $H_1'$, and at least $\delta v(H_2')^{v(C_2)}$ copies of $C_2$ in $H_2'$. By Lemma \ref{lem:remset} and \eqref{eq:n_1C3}, we can find $N \geq \frac{\delta^2}{2}v(H_1')^{v(C_1)}v(H_2')^{v(C_2)}$ embeddings of $C_1 \times C_2$ in $(H_1' \setminus V(F)) \times H_2'$, mapping $C_1$ into $H_1'$ and $C_2$ into $H_2'$.

As $C(F)$ is not a subgraph of $G$, for each of the above embeddings $f : C_1 \times C_2 \rightarrow (H_1' \setminus V(F)) \times H_2'$, either one of the edges in $f(C_1 \times C_2)$ is in $E(H) \setminus E(G)$, or one of the edges intersecting $V(F)$ in one vertex and $f(C_2)$ in $r-1$ vertices is in $E(H) \setminus E(G)$.

Suppose first that in at least $\frac{N}{2}$ of the embeddings $f$, one of the edges in $f(C_1 \times C_2)$ is in $E(H) \setminus E(G)$.

Every edge $e$ in $f(C_1 \times C_2)$ intersects $H_1'$ in some $s_1(e) \geq 0$ vertices, and $H_2'$ in some $s_2(e) \geq 0$ vertices, so that $s_1(e) + s_2(e) = r$. Thus there are $s_1$ and $s_2$ such that in at least $\frac{N}{2(r+1)}$ of the embeddings $f$, there is an edge in $f(C_1 \times C_2)$ intersecting $H_1'$ in $s_1$ vertices, and $H_2'$ in $s_2$ vertices, and this edge is in $E(H) \setminus E(G)$.

We count the number of such edges. It is at least
\begin{align*}
\frac{N}{2(r+1)a^{v(C_1) - s_1}b^{v(C_2)-s_2}} 
&\geq \frac{\delta^2}{4(r+1)}(1-c_3)^{v(C_1)+v(C_2)}\frac{a^{v(C_1)}b^{v(C_2)}}{a^{v(C_1) - s_1}b^{v(C_2)-s_2}} \\
&= \frac{\delta^2}{4(r+1)}(1-c_3)^{v(C_1)+v(C_2)}a^{s_1}b^{s_2}\\
&\geq \frac{\delta^2}{4(r+1)}(1-c_3)^{M_{\alpha, \beta}}(x_{\alpha, \beta} - c_1)^{r}(1 - x_{\alpha, \beta} - c_1)^{r}n^r, \textrm{as $v(C(F)) \leq M_{\alpha, \beta}$,}\\
&\gByRef{eq:eps5} \eps\frac{n^r}{r!}.
\end{align*}
Thus $|E(H)\setminus E(G)| > \eps\binom{n}{r}$, a contradiction.

Consequently in at least $\frac{N}{2}$ of the embeddings $f$, one of the edges intersecting $V(F)$ in one vertex and $f(C_2)$ in $r-1$ vertices is in $E(H) \setminus E(G)$. Then for some $x \in V(F)$ we have
\begin{align*}
\binom{b}{r-1} - d_{G_2'}(x) &\geq \frac{N}{2v(F)a^{v(C_1)}b^{v(C_2)-(r-1)}}\\
&\geq \frac{\delta^2}{4v(F)}(1-c_3)^{v(C_1)+v(C_2)}\frac{a^{v(C_1)}b^{v(C_2)}}{a^{v(C_1)}b^{v(C_2)-(r-1)}}\\
&\geq \frac{\delta^2}{4M_{\alpha, \beta}}(1-c_3)^{M_{\alpha, \beta}}b^{r-1},
\end{align*}
a contradiction with \eqref{eq:eps6} and our assumption that $d_{G_2'}(x) \geq (1-c_6)\binom{b}{r-1}$.
\end{proof}

Every vertex $x \in V(G_1')$ has degree
\begin{equation*}
d_{G_2'}(x) \geq (1-\eps^{1/2})\binom{b}{r-1} - |S_2|\binom{b-1}{r-2} \geq (1-\eps^{1/2} - c_3r)\binom{b}{r-1},
\end{equation*}
and similarly for every $x \in V(G_2')$ we have $d_{G_1'}(x) \geq (1-\eps^{1/2} - c_3r)\binom{a}{r-1}$. As $\eps^{1/2} +c_3r < c_6$ by \eqref{eq:eps7}, it follows from Claim~\ref{clm:clm5} that $G_1'$ is $\mathcal{F}_\alpha$-free and $G_2'$ is $\mathcal{F}_\beta$-free.

\begin{claim}
\label{clm:clm6}
For any $x \in S_1 \dot\cup S_2$, either $d_{G_1'}(x) \leq (\alpha + c_4)\binom{a}{r-1}$ or $d_{G_2'}(x) \leq (\beta + c_4)\binom{b}{r-1}$.
\end{claim}
\begin{proof}
Let $x \in S_1 \dot\cup S_2$ arbitrary and assume for a contradiction that $d_{G_1'}(x) > (\alpha + c_4)\binom{a}{r-1}$ and $d_{G_2'}(x) > (\beta + c_4)\binom{b}{r-1}$.

Then by Lemma \ref{lem:add_vertex}, \eqref{eq:n_1V} and the fact that $G_1'$ is $\mathcal{F}_\alpha$-free, there exist $P \in \mathcal{F}_\alpha$, $v \in V(P)$ and a copy $K_1$ of $P \setminus v$ in $G_1'$ such that $K_1$ and $x$ form a copy of $P$ in $G$. Similarly, there exist $Q \in \mathcal{F}_\beta$, $w \in V(Q)$ and a copy $K_2$ of $Q \setminus w$ in $G_2'$ such that $K_2$ and $x$ form a copy of $Q$ in $G$.

Recall the description of $C(P, Q, v, w)$ given in Lemma \ref{lem:rigid}, (B). According to this description, $C(P, Q, v, w)$ contains a copy of $P \setminus v$, which we also denote by $K_1$, and a copy of $Q \setminus w$, which we also denote by $K_2$, and a vertex $z$, such that $C(P, Q, v, w) \setminus (K_1 \cup K_2 \cup \{z\})$ has a partition $(C_1, C_2)$ valid w.r.t. $\mathcal{F}_\alpha(M)$ and $\mathcal{F}_\beta(M)$. Again one of $C_1$ or $C_2$ can be empty, but not both, as then $C(P, Q, v, w) \subseteq G$, a contradiction with the fact that $G$ is $\mathcal{F}_{\alpha, \beta}$-free.

By our choice of $\delta$, \eqref{eq:density1}, \eqref{eq:density2} and the fact that $n_1 \geq n^{*}$, there are at least $\delta v(H_1')^{v(C_1)}$ copies of $C_1$ in $H_1'$, and at least $\delta v(H_2')^{v(C_2)}$ copies of $C_2$ in $H_2'$. By Lemma \ref{lem:remset} and \eqref{eq:n_1C3}, we can find $N \geq \frac{\delta^2}{4}v(H_1')^{v(C_1)}v(H_2')^{v(C_2)}$ embeddings $f : C_1 \times C_2 \rightarrow (H_1' \setminus V(K_1)) \times (H_2' \setminus V(K_2))$.

Together with $K_1, K_2$ and $x$, any such embedding would potentially form a copy of $C(P, Q, v, w)$ in $G$. Therefore as in the proof of Claim \ref{clm:clm5}, we distinguish two cases.

First suppose there are $s_1$ and $s_2$ such that in at least $\frac{N}{2(r+1)}$ of the embeddings $f$, one of the edges in $f(C_1 \times C_2)$ is in $E(H) \setminus E(G)$, and intersects $H_1'$ in $s_1$ vertices, and $H_2'$ in $s_2$ vertices. A similar count to that in Claim \ref{clm:clm5} gives a contradiction.

Consequently in at least $\frac{N}{2}$ of the embeddings $f$, one of the edges intersecting $K_1$ in one vertex and $f(C_2)$ in $r-1$ vertices, or one of the edges intersecting $K_2$ in one vertex and $f(C_1)$ in $r-1$ vertices, is in $E(H) \setminus E(G)$.

Thus w.l.o.g. for some $y \in V(K_1)$ we obtain
\begin{align*}
\binom{b}{r-1} - d_{G_2'}(y) &\geq \frac{N}{4v(P)a^{v(C_1)}b^{v(C_2)-(r-1)}}\\
&\geq \frac{\delta^2}{16M_{\alpha, \beta}}(1-c_3)^{M_{\alpha, \beta}}b^{r-1}\\
&\gByRef{eq:eps6} c_6\binom{b}{r-1},
\end{align*}
a contradiction with the fact that $y \notin S_1$ and hence $d_{G_2'}(y) \geq (1-\eps^{1/2} - c_3r)\binom{b}{r-1} > (1-c_6)\binom{b}{r-1}$.
\end{proof}

Now let $U := \{x \in S_1 \dot\cup S_2 : d_{G_1'}(x) \leq (\alpha + c_4)\binom{a}{r-1}\}$ and $W := S_1\dot\cup S_2 - U$. By Claim \ref{clm:clm6}, any $x \in W$ has $d_{G_2'}(x) \leq (\beta + c_4)\binom{b}{r-1}$.

\begin{claim}
\label{clm:clm7}
For any $x \in  U, d_{G_2'}(x) \geq (1-c_6)\binom{b}{r-1}$ and for any $x \in W, d_{G_1'}(x) \geq (1-c_6)\binom{a}{r-1}$.
\end{claim}
\begin{proof}
We only prove the claim for $x \in W$, as the other statement is similar.

First note the following identity:
\begin{equation}
\label{eq:deg_oplus}
\alpha \oplus_r \beta - \beta(1 - x_{\alpha, \beta})^{r-1} - (r-1)! \sum_{j=1}^{r-2} \frac{x_{\alpha, \beta}^j (1-x_{\alpha, \beta})^{r-1-j}}{j!(r-1-j)!} = x_{\alpha, \beta}^{r-1}.
\end{equation}

Indeed, recall that
\begin{equation*}
\alpha \oplus_r \beta = \mathfrak{g}_{\alpha, \beta}(x_{\alpha, \beta}) = 1 - (1-\alpha)x_{\alpha, \beta}^r - (1-\beta)(1-x_{\alpha, \beta})^r.
\end{equation*}
Also
\begin{equation*}
(r-1)!\sum_{j=1}^{r-2}\frac{x_{\alpha, \beta}^j (1-x_{\alpha, \beta})^{r-1-j}}{j!(r-1-j)!} = 1 - x_{\alpha, \beta}^{r-1} - (1-x_{\alpha, \beta})^{r-1}.
\end{equation*}
Hence
\begin{align*}
\alpha \oplus_r \beta &- \beta(1 - x_{\alpha, \beta})^{r-1} - (r-1)! \sum_{j=1}^{r-2} \frac{x_{\alpha, \beta}^j (1-x_{\alpha, \beta})^{r-1-j}}{j!(r-1-j)!}\\
&= 1 - (1-\alpha)x_{\alpha, \beta}^r - (1-\beta)(1-x_{\alpha, \beta})^{r} -\beta(1-x_{\alpha, \beta})^{r-1} - 1 + x_{\alpha, \beta}^{r-1} + (1-x_{\alpha, \beta})^{r-1}\\
&= x_{\alpha, \beta}^{r-1} - (1-\alpha)x_{\alpha, \beta}^r  + (1-\beta)(1-x_{\alpha, \beta})^{r-1}x_{\alpha, \beta}\\
&= x_{\alpha, \beta}^{r-1} - \frac{(1-\alpha)(1-\beta)}{(\sqrt[r-1]{1-\alpha} + \sqrt[r-1]{1-\beta})^{r-1}}x_{\alpha,\beta} + \frac{(1-\alpha)(1-\beta)}{(\sqrt[r-1]{1-\alpha} + \sqrt[r-1]{1-\beta})^{r-1}}x_{\alpha,\beta}\\
&= x_{\alpha, \beta}^{r-1},
\end{align*}
proving \eqref{eq:deg_oplus}.

By \eqref{eq:n_01}, Lemma \ref{lem:min_degree} and our assumption that $\pi(\mathcal{F}_{\alpha, \beta}) \geq \alpha \oplus_r \beta$, we see that
\begin{equation}
\label{eq:min_deg}
d_G(x) \geq (\alpha \oplus_r \beta - c_1)\binom{n-1}{r-1}
\end{equation}
for any $x \in V(G)$.

Now let $x \in W$ arbitrary. Using \eqref{eq:min_deg} and the fact that $|S_1 \dot\cup S_2| \leq c_3 n$ we obtain
\begin{align*}
d_{G_1'}(x) &\geByRef{eq:n_01} (\alpha \oplus_r \beta - c_1)\binom{n-1}{r-1} - (\beta + c_4)\binom{b}{r-1} - \sum_{j=1}^{r-2}\binom{a}{j}\binom{b}{r-1-j} - c_3r\binom{n-1}{r-1}\\
&\begin{aligned}
\geByRef{eq:n_03} (\alpha \oplus_r \beta - 2c_1 &-(\beta + c_4)(1-x_{\alpha, \beta}+c_1)^{r-1}\\
&-(r-1)!\sum_{j=1}^{r-2}\frac{(x_{\alpha, \beta}+c_1)^{j}(1-x_{\alpha,\beta}+c_1)^{r-1-j}}{j!(r-1-j)!} - c_3r)\frac{n^{r-1}}{(r-1)!}
\end{aligned}\\
&\begin{aligned}
\geq (\alpha \oplus_r \beta - \beta(1-x_{\alpha, \beta})^{r-1} &-(2+2^{r-1})c_1 - c_4 - c_3r\\
&-(r-1)!\sum_{j=1}^{r-2}\frac{x_{\alpha, \beta}^j (1-x_{\alpha, \beta})^{r-1-j} + 3 \cdot 2^{r-1}c_1}{j!(r-1-j)!})\frac{n^{r-1}}{(r-1)!}
\end{aligned}\\
&\geByRef{eq:c5} (\alpha \oplus_r \beta - \beta(1-x_{\alpha, \beta})^{r-1} - (r-1)!\sum_{j=1}^{r-2}\frac{x_{\alpha, \beta}^j (1-x_{\alpha, \beta})^{r-1-j}}{j!(r-1-j)!} - c_5)\frac{n^{r-1}}{(r-1)!}\\
&\eqByRef{eq:deg_oplus} (x_{\alpha, \beta}^{r-1} - c_5)\frac{n^{r-1}}{(r-1)!}\\
&\geByRef{eq:c6} (1-c_6)(x_{\alpha, \beta}+c_1)^{r-1}\frac{n^{r-1}}{(r-1)!}\\
&\geq (1-c_6)\binom{a}{r-1}.
\end{align*}
This proves the claim.
\end{proof}

Define $U' := V(G_1') \cup U$ and $W' := V(G_2') \cup W$. Then by Claims \ref{clm:clm5} and \ref{clm:clm7}, $G[U']$ is $\mathcal{F}_\alpha$-free and $G[W']$ is $\mathcal{F}_\beta$-free. But $(U', W')$ is a partition of $G$. Setting $a':=|U'|$ and $b':=|W'|$, we obtain
\begin{align*}
e(G) &\leq e(G[U']) + e(G[V']) + \sum_{j=1}^{r-1}\binom{a'}{j}\binom{b'}{r-j}\\
&\leq \frac{1}{r!}(\alpha a'^{r} + \beta b'^{r} + r!\sum_{j=1}^{r-1}\frac{a'^jb'^{r-j}}{j!(r-j)!}+\frac{\zeta n^r}{2}), \quad \textrm{by our choice of $n_1$},\\
&\leq (\alpha \oplus_r \beta + \frac{\zeta}{2})\frac{n^r}{r!}\\
&\leByRef{eq:n_03} (\alpha \oplus_r \beta + \zeta)\binom{n}{r}.
\end{align*}
This finishes the proof of the Collapsing Lemma.
\end{proof}

\subsection{\normalsize End of the proof}

We are now ready to finish the proof of Theorem \ref{thm:local}. There is only one further ingredient that we need.

\begin{theorem}[Strong Removal Lemma, \cite{Rodl09}]
For every family $\mathcal{F}$ of $r$-graphs and any $\eps > 0$ there exist $\delta, m$ and $n_R$ such that the following holds. If $G$ is any $r$-graph on $n \geq n_R$ vertices which contains at most $\delta n^{v(F)}$ copies of any $r$-graph $F \in \mathcal{F}$ with $v(F) \leq m$, then $G$ can be made $\mathcal{F}$-free by removing at most $\eps\binom{n}{k}$ edges.
\end{theorem}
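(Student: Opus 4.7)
The plan is to reduce the infinite-family case to the (by now classical) hypergraph removal lemma for a single $r$-graph $F$. Recall that for any fixed $F$ and any $\eta > 0$, one can find $\delta_F, n_F$ such that any $r$-graph on $n \geq n_F$ vertices with at most $\delta_F n^{v(F)}$ copies of $F$ can be made $F$-free by deleting at most $\eta \binom{n}{r}$ edges; this is the theorem of Gowers and of Nagle--R\"odl--Schacht--Skokan, proved via the strong hypergraph regularity lemma together with a counting lemma. From this, the finite-family version follows immediately by iterating over the members of a finite $\mathcal{F}_0$, with $\eta$ replaced by $\eps/|\mathcal{F}_0|$ and $\delta$ by the minimum of the $\delta_F$.

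The heart of the matter is passing from finite to (possibly uncountable) infinite $\mathcal{F}$. First I would apply the strong hypergraph regularity lemma to $G$ with parameters chosen in terms of $\eps$; this produces a partition of $V(G)$ into $t = t(\eps)$ classes and a family of quasirandom constituents indexed by a ``cluster'' $r$-graph $R$ on at most $t$ vertices. I would then clean $G$ by deleting edges in non-regular, sparse, or low-density constituents; this removes at most $(\eps/2)\binom{n}{r}$ edges and leaves a hypergraph $G'$ whose remaining edges live in dense quasirandom blocks. Setting $m := t$, the point is that any copy of an arbitrary $F \in \mathcal{F}$ in $G'$ must ``project'' onto a sub-structure $F^{*}$ of the cluster hypergraph $R$, and $v(F^{*}) \leq t = m$. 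By the counting (or dense) lemma attached to hypergraph regularity, each such $F^{*}$-pattern in $R$ forces $\Omega_{F^{*}}(n^{v(F^{*})})$ copies of $F^{*}$ in $G$; hence the hypothesis ``at most $\delta n^{v(F')}$ copies of every $F' \in \mathcal{F}$ with $v(F') \leq m$'' --- applied to $F' = F^{*}$ --- rules this out, provided $\delta$ is chosen smaller than the threshold produced by the counting lemma.

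Thus after the cleaning step, no $F \in \mathcal{F}$ can be embedded in $G'$ at all, and $G$ has been made $\mathcal{F}$-free at the cost of at most $\eps\binom{n}{r}$ deletions. The parameters $\delta, m, n_R$ depend only on $\eps$ and $\mathcal{F}$, as required. An alternative route is a compactness argument: if the lemma failed, one could extract a sequence of counterexamples $G_k$ with $n_k \to \infty$ and at most $k^{-1} n_k^{v(F)}$ copies of every $F \in \mathcal{F}$ with $v(F) \leq k$, pass to a convergent subsequence in the space of hypergraph limits (Elek--Szegedy), obtain a limit $W$ whose $F$-densities vanish for all $F \in \mathcal{F}$, and then sample $W$ to produce $\mathcal{F}$-free approximants of the $G_k$, contradicting the failure assumption.

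The main obstacle is in ensuring that the ``projection'' of an arbitrary $F \in \mathcal{F}$ in $G'$ really does yield a witness $F^{*} \in \mathcal{F}$ (or a witness that the hypothesis controls). This is what forces the proof to commit to the quasirandom-block framework: once $G'$ lives on regular, dense constituents, embeddings of any $r$-graph are governed by the cluster hypergraph of bounded size $m$, so the hypothesis on $v(F) \leq m$ is precisely the right object to forbid. Everything else in the argument is bookkeeping with regularity parameters and the counting lemma, both of which introduce enormous constants but no new ideas.
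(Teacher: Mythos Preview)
The paper does not prove this theorem; it is quoted from R\"odl--Schacht \cite{Rodl09} and used as a black box. So there is no ``paper's own proof'' to compare against, and your sketch must stand on its own.

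Your second approach (compactness via hypergraph limits) is essentially correct and is one of the known routes to the result. Your first approach, however, contains a real gap that you identify but do not resolve. You set $m=t$ (the number of clusters), project a copy of $F\in\mathcal{F}$ in the cleaned graph $G'$ onto its image $F^{*}$ in the cluster hypergraph $R$, and then try to apply the hypothesis to $F'=F^{*}$. But the hypothesis only controls members of $\mathcal{F}$, and there is no reason for the homomorphic image $F^{*}$ to lie in $\mathcal{F}$. Concretely, take $r=2$, $\mathcal{F}=\{C_5\}$, and $R=K_3$: then $R$ contains no member of $\mathcal{F}$ with at most $3$ vertices (vacuously), yet $R(2)=K_{2,2,2}$ contains $C_5$, so the cleaned graph need not be $\mathcal{F}$-free. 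Your paragraph beginning ``The main obstacle\ldots'' names the problem but the sentence that follows is hand-waving, not a fix.

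The actual repair (and this is what R\"odl--Schacht do, in effect) is to choose $m$ more carefully and \emph{after} fixing $t$. There are only finitely many $r$-graphs $R$ on $[t]$; for each such $R$ whose blow-ups are not $\mathcal{F}$-free, pick a witness $F_R\in\mathcal{F}$ that embeds in some blow-up of $R$, and set $m:=\max_R v(F_R)$. Now if the cleaned $G'$ (which sits inside a blow-up of its cluster hypergraph $R$) contains any $F\in\mathcal{F}$, then $R$ is one of the ``bad'' reduced graphs, so the fixed witness $F_R\in\mathcal{F}$ with $v(F_R)\le m$ embeds in a blow-up of $R$; the counting lemma then gives $\ge c\,n^{v(F_R)}$ copies of $F_R$ in $G$, with $c$ depending only on $t$, $m$ and the regularity parameters. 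Choosing $\delta<c$ contradicts the hypothesis. The point is that $m$ is typically much larger than $t$, and the witness one feeds into the hypothesis is $F_R$, not the projection $F^{*}$.
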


The Removal Lemma for hypergraphs is a deep and rather recent result. Its origins can be traced back to the 70s, in the (now famous) Triangle Removal Lemma of Ruzsa and Szemer\'edi \cite{ruzsa76}, though it was only in the last decade that a suitable version for hypergraphs was obtained, independently by Gowers \cite{Gowers07} and by Nagle, R{\"o}dl, Schacht and Skokan (\cite{Nagle06}, \cite{Rodl04}, \cite{Rodl06}). Subsequently generalizations and other versions were proved. We remark that the original Removal Lemma is stated in terms of a single hypergraph; we crucially need here a version applicable to an \textit{infinite} family of hypergraphs.

\begin{proof}[Proof of Theorem \ref{thm:local}]
Let $\alpha, \beta \in \Pi_{\fin}^{(r)}$. In view of Lemma \ref{lem:local_inf}, to finish the proof we only need to show that $\alpha \oplus_r \beta \in \Pi_{\fin}^{(r)}$. Clearly we may assume that $\alpha, \beta \neq 1$.

Choose a finite family of $r$-graphs $\mathcal{F}_\alpha$ with $\pi(\mathcal{F}_\alpha)=\alpha$. By Lemma \ref{lem:minimal}, we may assume that $\mathcal{F}_\alpha$ is minimal. As $\alpha \neq 1$, $\mathcal{F}_\alpha$ is non-empty.

Similarly we can choose a finite non-empty minimal family of $r$-graphs $\mathcal{F}_\beta$ with $\pi(\mathcal{F}_\beta) = \beta$.

On input $\mathcal{F}_\alpha$ and $\mathcal{F}_\beta$, the Collapsing Lemma gives a positive integer $M := M_{\alpha, \beta}$. Choose and fix arbitrary $M$-closures $\mathcal{F}_\alpha(M)$ and $\mathcal{F}_\beta(M)$. The Collapsing Lemma now gives us a finite family of $r$-graphs $\mathcal{F}_{\alpha, \beta}$ and an $\eps > 0$.

Let $\{G_n^1\}_{n \geq 1}$ be any sequence of $\mathcal{F}_\alpha(M)$-free $r$-graphs with $v(G_n^1)=n$ and $d(G_n^1) \rightarrow \alpha$. Such a sequence exists, even in the case $\alpha = 0$, as any $r$-graph in $\mathcal{F}_\alpha(M)$ has at least one edge. Similarly, let $\{G_n^2\}_{n \geq 1}$ be any sequence of $\mathcal{F}_\beta(M)$-free $r$-graphs with $v(G_n^2)=n$ and $d(G_n^2) \rightarrow \beta$.

Consider the sequence $\{G_n\}_{n \geq 1}$ with $G_n := G^1_{x_{\alpha, \beta}n} \times G^2_{(1-x_{\alpha, \beta})n}, n \geq 1$ (we disregard lower and upper integer parts here, as it does not affect our proof). Then $d(G_n)$ converges to $\alpha \oplus_r \beta$. Moreover $G_n$ is $\mathcal{F}_{\alpha, \beta}$-free by the first part of the Collapsing Lemma, for any $n \geq 1$.

Define $\mathcal{F}_{\infty} := \{F : F \not\subseteq G_n, \forall n \geq 1\}$. Then $\mathcal{F}_{\alpha, \beta} \subseteq \mathcal{F}_{\infty}$. Apply the Strong Removal Lemma to $\mathcal{F}_{\infty}$ and $\frac{\eps}{2}$ to obtain $\delta$ (which we disregard), $m$ and $n_R$.

Finally, set $\mathcal{F}_m := \mathcal{F}_{\alpha, \beta} \cup \{F \in \mathcal{F}_{\infty} : v(F) \leq m\}$.

We claim $\pi(\mathcal{F}_m)=\alpha \oplus_r \beta$.

Clearly $\pi(\mathcal{F}_m) \geq \alpha \oplus_r \beta$, as the sequence $\{G_n\}_{n \geq 1}$ shows.

Let $\zeta > 0$ arbitrary. We show $\pi(\mathcal{F}_m) \leq \alpha \oplus_r \beta + \zeta$.

The Collapsing Lemma gives us an $n_0 \geq 1$. Let $G$ be any maximum $\mathcal{F}_m$-free $r$-graph on $n \geq \max\{n_0, n_R\}$ vertices. Then $G$ can be made $\mathcal{F}_{\infty}$-free by removing at most $\frac{\eps}{2}\binom{n}{r}$ edges. Let $G'$ be the resulting $r$-graph. As $G' \notin \mathcal{F}_{\infty}$, there exists $k \geq 1$ such that $G' \subseteq G_k$. Let $H$ be the subgraph of $G_k$ isomorphic with $G'$. Now $H' := G_k[V(H)]$ is also $\mathcal{F}_m$-free and hence $e(G) \geq e(H') \geq e(G')$. Thus $|E(H') \setminus E(H)| \leq \frac{\eps}{2}\binom{n}{r}$. Consequently $G$ is $\eps$-close to $H'$. But $H'$, being an induced subgraph, is of the form $H_1 \times H_2$, with $H_1$ an $\mathcal{F}_\alpha(M)$-free $r$-graph, and $H_2$ an $\mathcal{F}_\beta(M)$-free $r$-graph.

By the Collapsing Lemma, $G$ has at most $(\alpha \oplus_r \beta + \zeta)\binom{n}{r}$ edges. Thus $\pi(\mathcal{F}_m) \leq \alpha \oplus_r \beta + \zeta$. As $\zeta$ was arbitrary, the proof is finished.
\end{proof}

We briefly highlight some of the difficulties we had to overcome in the above proof.

The general strategy was taken from the proof of Theorem 3 in \cite{pikhurko12}. While the Removal Lemma allows us to force any maximum $\mathcal{F}_{\alpha, \beta}$-free $r$-graph to be close to the desired structure by adding some more forbidden $r$-graphs, it can not be made arbitrarily close. This requires the proof of a Collapsing Lemma.

The most serious obstacle appears when we pass from the sequence $\{G_n\}_{n \geq 1}$ to the induced subgraph $H'$. While $H'$ is a subgraph of some $G_k$, the number of vertices of $H'$ is not (in any way) bounded from below by the number of vertices of $G_k$. Thus properties of graphs in the sequence $\{G_n\}_{n \geq 1}$ need not pass to $H'$. In particular, if $G_k$ contains $\Omega(v(G_k)^{v(F)})$ copies of some $r$-graph $F$, $H'$ can very well have only a few such copies, or none at all. In order to overcome this obstacle we used information hidden in the function $\pi$. This is reflected in the Rigidity Lemma, which is not stated for a particular graph sequence, but more generally in terms of two families of $r$-graphs.

\section{\normalsize Explicit irrational densities}

We now prove Corollaries~\ref{cor:irrat1} and~\ref{cor:irrat2}, which offer explicit examples of irrational Tur\'an densities. We first need a result from number theory.

\begin{theorem}[\cite{carr09}]
\label{thm:LinIndep}
Let $r_i > 0$ be roots of rationals (i.e. $r_i^{n_i} \in \mathbb{Q}$ with $n_i \in \mathbb{N}$) for $i$ in a finite indexing set $I$. Suppose
\begin{equation*}
\sum_{i \in I}q_i r_i = q \in \mathbb{Q}
\end{equation*}
for positive rationals $q_i$. Then each $r_i$ is rational.
\end{theorem}

\begin{proof}[Proof of Corollary~\ref{cor:irrat1}]
Let $r \geq 3$. As $\lambda(e) = \frac{r!}{r^r}$, where $e$ is the $r$-edge, by Theorem~\ref{thm:pikfinite} we have $\frac{r!}{r^r} \in \Pi_{\fin}^{(r)}$. Hence by Theorem \ref{thm:local},
\begin{equation*}
\frac{r!}{r} \oplus_r 0 = 1 - \frac{r^{r-1} - (r-1)!}{\left(r+\sqrt[r-1]{r^{r-1}-(r-1)!}\right)^{r-1}} \in \Pi_{\fin}^{(r)}.
\end{equation*}
This number is rational if and only if $q := \left(r+\sqrt[r-1]{r^{r-1}-(r-1)!}\right)^{r-1}$ is rational.

Assume for a contradiction that $q$ is rational. Let $r_i := (r^{r-1}-(r-1)!)^{\frac{i}{r-1}}$ for $0 \leq i \leq r-1$. Then
\begin{equation*}
q = \sum_{i=0}^{r-1}\binom{r-1}{i}r^{r-1-i} r_i.
\end{equation*}
All $r_i$ are positive roots of rationals. Hence by Theorem \ref{thm:LinIndep}, we obtain that $r_i$ is rational for all $0 \leq i \leq r-1$. In particular, $r_1$ is a natural number.

As $r \geq 3$, we have $r_1 > 1$. So we can find a prime divisor $p | r_1$. Then $p^{r-1} | r^{r-1}-(r-1)!$. Hence $1 < p < r$ and so $p | (r-1)!$.

Thus $p|r^{r-1}$. But then $p | r$ and so certainly $p^{r-1} | r^{r-1}$. As $p^{r-1}$ divides $r^{r-1}-(r-1)!$, it must also divide $(r-1)!$. But it is well known that for any prime $p$, the power of $p$ dividing $(r-1)!$ is
$$\left\lfloor \frac{r-1}{p} \right\rfloor + \left\lfloor \frac{r-1}{p^2} \right\rfloor + \ldots < \frac{r-1}{p-1} \leq r-1.$$
This is a contradiction, completing the proof.
\end{proof}

For the proof of Corollary~\ref{cor:irrat2} we shall need the following result of Sidorenko.
\begin{theorem}[Sidorenko, \cite{sidorenko92}]
\label{thm:sid}
$1 - \frac{1}{2^p} \in \Pi_{\fin}^{(2k)}$ for any $k, p \geq 1$.
\end{theorem}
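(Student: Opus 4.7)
The plan is to match the lower bound $1-2^{-p}$ by an explicit colouring construction and then exhibit a finite forbidden family achieving the same Turán density.

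For the construction, partition $[n]$ into $2^p$ nearly-balanced classes indexed by $\mathbb{F}_2^p$, write $a(v)\in\mathbb{F}_2^p$ for the class label of $v$, and let $H_n$ be the $2k$-graph whose edges are the $2k$-sets $S$ with $\sum_{v\in S}a(v)\neq 0$ in $\mathbb{F}_2^p$. A standard character-sum/averaging computation shows that a uniformly random $2k$-subset has zero vector sum with probability $2^{-p}(1+o(1))$, so $|E(H_n)|=(1-2^{-p}+o(1))\binom{n}{2k}$.

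To produce a finite forbidden family, I would exploit the following parity observation: since $r=2k$ is even, $2k\cdot c=0$ in $\mathbb{F}_2^p$ for every $c$, so any $2k$ monochromatic vertices automatically form a zero-sum $2k$-set. By pigeonhole, as soon as $t=(2k-1)2^p+1$ every $\mathbb{F}_2^p$-colouring of $[t]$ contains a monochromatic $2k$-subset, and hence every $\mathbb{F}_2^p$-colouring of the vertices of $K_t^{(2k)}$ produces a zero-sum edge. Call such a $2k$-graph ``$p$-unavoidable''. By its very definition $H_n$ is $K_t^{(2k)}$-free, which already gives $\pi(K_t^{(2k)})\geq 1-2^{-p}$. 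In the graph case $k=1$ one has $t=2^p+1$ and Turán's theorem supplies the matching upper bound, so the case $k=1$ is settled immediately.

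For $k\geq 2$ the Turán density of $K_t^{(2k)}$ alone is not known, so my plan is to enlarge the forbidden family to $\mathcal{F}=\{F:|V(F)|\leq T,\ F\text{ is }p\text{-unavoidable}\}$ for a sufficiently large constant $T=T(k,p)$. This family is finite, contains $K_t^{(2k)}$, and is still avoided by $H_n$ (the colouring $a$ witnesses $p$-avoidability of every induced subgraph). The heart of the argument is the matching upper bound $\pi(\mathcal{F})\leq 1-2^{-p}$: I would show that any $\mathcal{F}$-free $2k$-graph $G$ on $n$ vertices admits an $\mathbb{F}_2^p$-colouring of $V(G)$ under which all but $o(n^{2k})$ edges of $G$ have nonzero vector sum, and then a convexity estimate on the class sizes yields $|E(G)|\leq (1-2^{-p}+o(1))\binom{n}{2k}$. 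The main obstacle is precisely this global colouring step: one has to patch the local $p$-avoiding colourings (guaranteed on every $T$-vertex induced subgraph by $\mathcal{F}$-freeness) into a coherent colouring of all of $V(G)$. I expect this to require a symmetrisation/stability argument, aided by the hypergraph removal lemma to absorb the $o(n^{2k})$ error term arising from inconsistencies between local colourings.
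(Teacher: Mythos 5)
The paper does not prove Theorem~\ref{thm:sid}; it is imported as a black box from Sidorenko's 1992 paper, so there is no in-house argument to compare against. Reviewing your sketch on its own merits: the lower-bound construction is correct and natural. $H_n$ is the blow-up of the $2k$-multigraph $B_p$ on $\mathbb F_2^p$ whose edges are the nonzero-sum $2k$-multisets, its density tends to $1-2^{-p}$, and since $H_n$ carries a $p$-avoiding colouring, so does every subgraph, so $H_n$ is $\mathcal F$-free for $\mathcal F=\{F:v(F)\le T,\ F\ p\text{-unavoidable}\}$.

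The matching upper bound is where the genuine gap lies, and the ``patch local colourings together'' framing is both unpromising and not the right way to deploy the removal lemma. Fixed-window local colourability does not imply global colourability (compare large-girth graphs of large chromatic number), and no symmetrisation trick is offered to bridge this. The cleaner route is to apply the Strong Removal Lemma \emph{first}, to the \emph{infinite} family $\mathcal F_\infty$ of all $p$-unavoidable $2k$-graphs: given $\eps>0$ it returns $\delta,m,n_R$, and if $T\ge m$ and $G$ is $\mathcal F$-free on $n\ge n_R$ vertices, then $G$ has zero copies of every small member of $\mathcal F_\infty$, so deleting at most $\eps\binom{n}{2k}$ edges yields a $G'$ with \emph{no} $p$-unavoidable subgraph of any size; and since $G'$ is a subgraph of itself, $G'$ admits a global $p$-avoiding colouring outright, no patching. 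But even this corrected plan only gives $1-2^{-p}\in\Pi_\infty^{(2k)}$, not the finite statement: the threshold $T=m(\eps)$ depends on $\eps$, so letting $\eps\to 0$ makes the candidate family grow without bound, and one only concludes $\inf_T\pi(\mathcal F_T)=1-2^{-p}$. To isolate a \emph{single} finite family achieving $1-2^{-p}$ one needs a stability/``collapsing'' step on top of the removal lemma, exactly as in the proof of Pikhurko's Theorem~\ref{thm:pikfinite} and of Theorem~\ref{thm:local} of this paper --- and that step is entirely absent from your sketch. Two final remarks. The ``convexity estimate on class sizes'' is really a character-sum bound,
\begin{equation*}
p_{\overline{B_p}}(\mathbf x)\;=\;2^{-p}\sum_{\chi}\Bigl(\sum_{a\in\mathbb F_2^p}x_a\chi(a)\Bigr)^{2k}\;\ge\;2^{-p},
\end{equation*}
where every summand is nonnegative precisely because the exponent $2k$ is even; this, not the $K_{(2k-1)2^p+1}^{(2k)}$ observation, is where the evenness of $r$ is genuinely used, and for odd $r$ both the bound and the theorem fail. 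And since the hypergraph removal lemma postdates 1992 by more than a decade, whatever argument Sidorenko actually gave must have been of a different nature; what you are sketching would at best be a modern re-proof.
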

\begin{proof}[Proof of Corollary~\ref{cor:irrat2}]
Let $r \geq 4$ even. By Theorem \ref{thm:sid}, $\frac{1}{2} \in \Pi_{\fin}^{(r)}$. Consequently by Theorem \ref{thm:local},
\begin{equation*}
\frac{1}{2} \oplus_r 0 = 1 - \frac{1}{(1+\sqrt[r-1]{2})^{r-1}} \in \Pi_{\fin}^{(r)}.
\end{equation*}
This number is rational if and only if $(1+\sqrt[r-1]{2})^{r-1}$ is rational.

However, $x_0 := \sqrt[r-1]{2}$ has the minimal polynomial $f(x) = x^{r-1} - 2$ ($f(x)$ is irreducible by Eisenstein's criterion). Consequently if $(1+x_0)^{r-1}$ equals some rational $\frac{s}{t}$, then $f(x)$ must divide $t(1+x)^{r-1} - s$. Then $tf(x) = t(1+x)^{r-1} - s$, which is not possible as $r \geq 4$. Thus $(1+x_0)^{r-1}$ is irrational, completing the proof.
\end{proof}

\section{\normalsize Towards a semiring structure}

As we have seen in the Introduction, the set $\Pi_{\infty}^{(2)}$ has a semiring structure given by the two operations $\oplus_2$ and $\otimes_2$. We have already successfully generalized the operation $\oplus_2$ to any $r \geq 2$. It is thus natural to try to do the same with $\otimes_2$.

Unfortunately we will not be so lucky this time.

As in the case of $\oplus_r$, we must find a corresponding operation on $r$-graphs. Recall that for any $\alpha, \beta \in \Pi_{\infty}^{(2)}$ we have that $1- \alpha \otimes_2 \beta = (1-\alpha)(1-\beta)$. There would be many advantages if this would hold for any $r$, in particular $\otimes_2$ is distributive over $\oplus_r$ for any $r$, and we would obtain a semiring structure on $\Pi_{\infty}^{(r)}$ as desired. As far as we can see, there is only one natural construction associated to this operation (we keep in mind the concrete examples given by $r=2$ and graph cliques).

Let $G$ and $H$ be two $r$-graphs. We define an $r$-graph $G \otimes H$ in the following way.

Assume w.l.o.g. that $G$ has vertex set $[n]$. The vertex set of $G \otimes H$ consists of $n$ disjoint copies $V_1, \ldots, V_n$ of the vertex set of $H$. If $v$ is a vertex of $H$, we let $v_i \in V_i$ be its $i$-th copy. We add the following edges to $G \otimes H$.

For all $h=(v^1, \ldots, v^r) \in E(H)$, we add all edges $f$ with $|f \cap \{v^t_{1}, \ldots, v^t_{n}\}| = 1, 1 \leq t \leq r$. Furthermore for all $e = (i_1, \ldots, i_r) \in E(G)$, we add all edges $f$ with $|f \cap V_{i_j}| = 1, 1 \leq j \leq r$. No other edges are added.

This is a generalization of the strong product of graphs to uniform hypergraphs.

Note that if $r=2$, $G$ is an $n$-clique\footnote{i.e. a complete $2$-graph on $n$ vertices.} and $H$ is an $m$-clique then $G \otimes H$ is an $mn$-clique. This corresponds to our objective and hence we would like to prove the following.

\begin{target}
\label{tar:target}
For any two $r$-graphs $G$ and $H$ we have $\lambda(G \otimes H) = \lambda(G) + \lambda(H) - \lambda(G)\lambda(H)$.
\end{target}

Target \ref{tar:target} would imply via continuity that $\Pi_\infty^{(r)}$ is closed under $\otimes_2$.

It is easy to show that $\lambda(G \otimes H) \geq \lambda(G) + \lambda(H) - \lambda(G)\lambda(H)$. Indeed, assume $G$ has vertex set $[n]$ and $H$ has vertex set $[m]$. We identify the vertex set of $G \otimes H$ with $[nm]$: the vertex set of the $i$-th copy of $H$ runs from $(i-1)m+1$ to $im$. If $\mathbf{a} \in \Delta_n$ is an optimal vector for $G$ and $\mathbf{b}$ is an optimal vector for $H$, then considering the vector $\mathbf{c} \in \Delta_{nm}$ with $c_{(i-1)m+j} := a_ib_j, 1 \leq i \leq n, 1 \leq j \leq m,$ we see that $\lambda(G \otimes H) \geq \lambda(G) + \lambda(H) - \lambda(G)\lambda(H)$, as claimed.

One can easily check that equality holds true for $r=2$.

But for $r \geq 3$ Target \ref{tar:target} is false.

\subsection{\normalsize A permanent d\'etour}

Let us look at the special case when both $G$ and $H$ are one edge $r$-graphs. Then $\lambda(G) = \lambda(H) = \frac{r!}{r^r}$ and we would like that $\lambda(G \otimes H) = \frac{2r!}{r^r} - \frac{(r!)^2}{r^{2r}}$. This claim is highly non-trivial; it is equivalent to a statement known as Dittert's conjecture.

First let us state the former van der Waerden conjecture, now a theorem.
\begin{theorem}[Egorychev, Falikman, 1981]
\label{thm:waerden}
The minimum permanent among all $n \times n$ doubly stochastic matrices is $\frac{n!}{n^n}$, and is achieved only by the matrix with all entries equal to $1/n$.
\end{theorem}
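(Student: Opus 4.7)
The plan is to follow the classical Egorychev--Falikman proof via the Alexandrov--Fenchel inequality for permanents. First I would establish existence of a minimizer: the set $\Omega_n$ of $n\times n$ doubly stochastic matrices is a compact convex subset of $\mathbb{R}^{n^2}$ and the permanent is a continuous (in fact polynomial) function, so some $A \in \Omega_n$ attains the minimum value $\mu_n := \mathrm{per}(A)$.

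The key structural step is London's lemma: for every $i,j$, the minor permanent satisfies $\mathrm{per}(A_{ij}) \ge \mu_n$, with equality whenever $a_{ij} > 0$, where $A_{ij}$ is obtained from $A$ by deleting row $i$ and column $j$. This is a local perturbation argument: if $a_{ij} > 0$, $a_{k\ell} > 0$, but $\mathrm{per}(A_{ij}) \neq \mathrm{per}(A_{k\ell})$, then shifting a small amount of mass around inside the $2\times 2$ block at positions $(i,j),(i,\ell),(k,j),(k,\ell)$ strictly decreases $\mathrm{per}(A)$ while preserving all row and column sums, contradicting minimality. Together with a separate argument (using Birkhoff's decomposition into permutation matrices, combined with London's inequality itself) ruling out zero entries in a minimizer, this forces $a_{ij} > 0$ and $\mathrm{per}(A_{ij}) = \mu_n$ for all pairs $(i,j)$.

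The main obstacle, and the nontrivial ingredient of the whole proof, is the Alexandrov--Fenchel inequality for permanents: for any positive column vectors $v_1, v_2, v_3, \ldots, v_n \in \mathbb{R}^n$,
\begin{equation*}
\mathrm{per}(v_1, v_2, v_3, \ldots, v_n)^2 \ge \mathrm{per}(v_1, v_1, v_3, \ldots, v_n)\cdot \mathrm{per}(v_2, v_2, v_3, \ldots, v_n),
\end{equation*}
with equality if and only if $v_1$ and $v_2$ are proportional. The cleanest route is Egorychev's, which expresses $\mathrm{per}(v_1, \ldots, v_n)$ (up to a constant factor) as a mixed discriminant of the diagonal matrices $\mathrm{diag}(v_i)$, and then invokes Alexandrov's inequality for mixed discriminants of positive semidefinite matrices, along with its equality case. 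Essentially all the analytic content of the theorem is concentrated in this step.

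To finish I would apply the inequality with $v_1, v_2$ chosen as the first two columns of the minimizer $A$. Expanding both sides by multilinearity along these columns and substituting London's equalities $\mathrm{per}(A_{ij}) = \mu_n$, both sides evaluate to $\mu_n^2$, so equality must hold in the Alexandrov--Fenchel inequality; hence columns $1$ and $2$ of $A$ are proportional. Iterating the same argument for every pair of columns, all columns of $A$ are mutually proportional, and the doubly stochastic constraint then forces $A = J_n/n$, where $J_n$ is the all-ones matrix. A direct calculation yields $\mathrm{per}(J_n/n) = n!/n^n$, which is therefore the minimum value, attained uniquely at $J_n/n$.
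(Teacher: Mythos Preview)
The paper does not prove this theorem at all: it is stated as a known result attributed to Egorychev and Falikman, with a brief historical remark and a pointer to Gurvits's later proof, and is then used only as context for Dittert's conjecture. So there is no ``paper's own proof'' to compare your proposal against.

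That said, your outline is a faithful sketch of the classical Egorychev argument and would be an appropriate proof to cite or reproduce. One point to be careful about if you ever write it out in full: the step ``ruling out zero entries in a minimizer'' is genuinely delicate and is where many expositions diverge. You cannot simply assert positivity before applying Alexandrov--Fenchel; in Egorychev's original route one first applies the inequality to a minimizer that is not yet known to be positive, deduces column proportionality on the support, and only then uses the doubly stochastic constraints (plus the fact that a minimizer must be fully indecomposable, which follows from $\mu_n>0$ and a block argument) to force all entries positive. Your parenthetical ``using Birkhoff's decomposition \ldots'' gestures at this but would need to be made precise. Otherwise the plan is sound.
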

Theorem \ref{thm:waerden} was conjectured by van der Waerden in 1926. After attracting a lot of interest and a series of partial results, it was finally proved independently by Egorychev and Falikman in 1981, and it is still considered a milestone result in combinatorics, with many applications across the field. A different proof was recently found by Gurvits \cite{Gurvits08}.

Several other similar conjectures have been made over time. The following conjecture is due to E. Dittert.
\begin{conjecture}[Dittert, 1983, \cite{Minc83}]
\label{conj:dittert}
Let $A$ be a non-negative $n \times n$ matrix with row sums $r_1, \ldots, r_n$ and column sums $c_1, \ldots, c_n$. Suppose $\sum_{i=1}^n r_i = \sum_{i=1}^n c_i = 1$. Then the function
\begin{equation*}
\psi(A) := \prod_{i=1}^n r_i + \prod_{i=1}^n c_i - \textrm{\upshape{per}}(A)
\end{equation*}
has the maximum $\frac{2}{n^n} - \frac{n!}{n^{2n}}$, and it is achieved only by the matrix $J_n$ with all entries equal to $1/n^2$.
\end{conjecture}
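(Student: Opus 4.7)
The plan is to study $\psi$ as a continuous function on the compact simplex $\mathcal{S}_n := \{A \in \mathbb{R}_{\geq 0}^{n \times n} : \sum_{i,j} a_{ij} = 1\}$, where a maximum is necessarily attained. At $A = J_n$ one has $r_i = c_j = 1/n$ for every $i,j$, $\prod_i r_i = \prod_j c_j = n^{-n}$, and $\mathrm{per}(J_n) = n!/n^{2n}$, so $\psi(J_n) = 2/n^n - n!/n^{2n}$. The goal is to show $\psi(A) \leq \psi(J_n)$ on $\mathcal{S}_n$, with equality only at $J_n$.

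First I would isolate the ``balanced'' slice of $\mathcal{S}_n$. Since $\sum_i r_i = \sum_j c_j = 1$, AM-GM immediately gives $\prod_i r_i + \prod_j c_j \leq 2/n^n$, with equality iff every row and every column sum equals $1/n$. In this balanced case the matrix $nA$ is doubly stochastic, and the Egorychev-Falikman Theorem \ref{thm:waerden} forces $\mathrm{per}(nA) \geq n!/n^n$, i.e.\ $\mathrm{per}(A) \geq n!/n^{2n}$, with equality iff $A = J_n$. This settles the conjecture on the balanced slice with the correct unique extremizer, and clarifies that the conjecture is really a statement about what happens \emph{off} that slice.

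Away from the balanced slice the two terms of $\psi$ compete: there is strict slack in $\prod r_i + \prod c_j$, but $\mathrm{per}(A)$ can also drop. A variational attack on the interior of $\mathcal{S}_n$ uses Lagrange multipliers; writing $A(i|j)$ for the submatrix with row $i$ and column $j$ deleted, stationarity reads
\begin{equation*}
\prod_{k \neq i} r_k + \prod_{k \neq j} c_k - \mathrm{per}(A(i|j)) = \lambda \quad \text{for all } i,j,
\end{equation*}
and I would try, exploiting the row--column symmetry together with log-concavity of the permanent in the spirit of Gurvits' stable-polynomial proof of van der Waerden, to force the only fully positive solution of this system to be $J_n$.

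The main obstacle I anticipate lies on the boundary of $\mathcal{S}_n$, where some entries of $A$ vanish: Hall-type obstructions can make $\mathrm{per}(A)$ collapse much faster than $\prod r_i + \prod c_j$ shrinks, so a purely variational argument cannot suffice. A natural route is induction on the size of the support of $A$, using K\"onig's theorem to isolate a block forcing $\mathrm{per}(A) = 0$ and then perturbing it away, but simultaneously controlling the change in both product terms is precisely where a new mixed-permanent inequality --- analogous in spirit to the mixed-discriminant inequality underlying Egorychev-Falikman --- appears to be required. That this coordination has resisted the existing toolkit is likely why Dittert's conjecture has remained open since 1983, and I would expect any complete proof to rest on such a new inequality rather than on variational methods alone.
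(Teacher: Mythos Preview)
The statement you were asked to ``prove'' is a \emph{conjecture}, and the paper does not prove it either. It is presented in Section~6.1 precisely as an open problem: the paper records that it was verified for $n=2$ by Sinkhorn and for $n=3$ by Hwang, that Hwang showed any fully positive maximiser must be $J_n$, and that Cheon--Wanless proved the maximum of $\psi$ is exponentially close to the conjectured value, but no proof is offered or claimed. The paper's interest in Dittert's conjecture is that (aside from uniqueness) it is equivalent to the identity $\lambda(e\otimes e)=2r!/r^r-(r!)^2/r^{2r}$ for an $r$-edge $e$, which would have been a special case of the paper's failed Target~1.

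Your write-up is therefore appropriate in spirit: you correctly isolate the balanced slice, where AM--GM plus Egorychev--Falikman settle the inequality with the right extremiser, and you correctly identify that the genuine difficulty lies off that slice and on the boundary of $\mathcal{S}_n$, where the permanent can collapse. Your concluding assessment --- that a full proof would require a new mixed-permanent inequality beyond the existing toolkit --- is consistent with the literature the paper cites. Just be aware that what you have produced is not a proof but an informed discussion of why the conjecture is hard; that is all anyone has at present, and it matches what the paper itself says.
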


Conjecture \ref{conj:dittert} clearly implies Theorem \ref{thm:waerden}. There is substantial evidence towards Conjecture \ref{conj:dittert}. It was proved for $n=2$ by Sinkhorn \cite{Sinkhorn84} and for $n=3$ by Hwang \cite{hwang87}. Hwang further showed in \cite{Hwang86} that if the $\psi$-maximising matrix is positive then it must equal $J_n$, and that $J_n$ is a strict local maximum of $\psi$. Other partial results were obtained by Cheon and Yoon \cite{Cheon06} and Cheon and Wanless \cite{Cheon12}. Most importantly Cheon and Wanless \cite{Cheon12} showed that the maximum value of $\psi$ is exponentially close to the conjectured value.

\begin{theorem}[Cheon-Wanless, \cite{Cheon12}]
\label{thm:wanless}
For any non-negative $n \times n$ matrix $A$ with the sum of all elements equal to $1$ we have $\psi(A) < \psi(J_n) + O(n^{4-n}e^{2n})$.
\end{theorem}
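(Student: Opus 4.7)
The plan is to split the analysis into two regimes according to how close the row sums $r_1,\ldots,r_n$ and column sums $c_1,\ldots,c_n$ of $A$ are to the uniform value $1/n$. The unconditional AM--GM inequality, applied to the constraints $\sum r_i = \sum c_j = 1$, gives $\prod_i r_i \leq n^{-n}$ and $\prod_j c_j \leq n^{-n}$; equality holds simultaneously only at $J_n$, in which case van der Waerden's theorem (Theorem \ref{thm:waerden}) pins $\mathrm{per}(A)$ from below by $n!/n^{2n}$. Thus $\psi(J_n) = 2n^{-n} - n!/n^{2n}$ arises as the simultaneous equality case in AM--GM and van der Waerden, and the task is to show that any deviation away from $J_n$ costs at least $n!/n^{2n}$ in $\psi$, up to the prescribed exponentially small error.

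Fix a threshold $\delta = \delta(n) > 0$ to be optimized at the end. In the \emph{non-uniform regime}, where some $|r_i - 1/n| \geq \delta$ or some $|c_j - 1/n| \geq \delta$, I would use a quantitative AM--GM: Taylor-expanding $\log \prod r_i$ around the uniform point subject to $\sum(r_i - 1/n)=0$ shows that $\prod r_i \leq n^{-n}\exp(-\Omega(n^2 \delta^2))$, the crucial point being that the Hessian of $-\log$ at the uniform point remains positive definite when restricted to the zero-sum hyperplane. Combined with the trivial bounds $\prod c_j \leq n^{-n}$ and $\mathrm{per}(A) \geq 0$, this yields $\psi(A) \leq 2n^{-n} - n^{-n}\exp(-\Omega(n^2\delta^2))$, which stays below $\psi(J_n) + O(n^{4-n}e^{2n})$ provided $\delta$ is not chosen too small.

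In the \emph{near-uniform regime}, where all $|r_i - 1/n|,\,|c_j - 1/n| < \delta$, the scaled matrix $B := nA$ has all row and column sums within $n\delta$ of $1$. I would apply a Sinkhorn-type normalization, writing $B = D_1 \widetilde{B} D_2$ with $\widetilde{B}$ doubly stochastic and $D_1, D_2$ diagonal with entries $1 + O(n\delta)$. Then $\mathrm{per}(B) \geq (1 - O(n\delta))^{2n} \mathrm{per}(\widetilde{B}) \geq (1 - O(n\delta))^{2n} n!/n^n$ by Egorychev--Falikman, so $\mathrm{per}(A) \geq n!/n^{2n} \cdot (1 - O(n^2\delta))$ for $\delta$ small. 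Combined with $\prod r_i + \prod c_j \leq 2n^{-n}$, this gives $\psi(A) \leq \psi(J_n) + n!/n^{2n} \cdot O(n^2\delta)$. Balancing this estimate against the bound from the non-uniform regime — that is, choosing $\delta$ so that $n^{-n}\exp(-\Omega(n^2\delta^2))$ and $n!/n^{2n}\cdot n^2\delta$ are of comparable order — and invoking Stirling's formula for $n!$ produces the claimed error term.

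The main obstacle is the quantitative stability of van der Waerden's theorem needed in the near-uniform regime: the original Egorychev--Falikman proof is not transparently quantitative, so one must argue either with Gurvits' capacity-based proof (in which capacity is a continuous function of the entries with explicit moduli) or with an effective bound on a Sinkhorn iteration showing how quickly a near-doubly-stochastic matrix can be rescaled to a genuinely doubly stochastic one without losing much permanent. A secondary, more technical, hurdle is carrying out the Taylor expansion of $-\log \prod r_i$ carefully enough on the affine constraint $\sum r_i = 1$ so that the quadratic loss $\Omega(n^2\delta^2)$ is not absorbed by the linear term; once both quantitative inputs are in hand, the two-regime argument above closes cleanly.
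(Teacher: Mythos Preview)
This theorem is quoted in the paper as an external result of Cheon and Wanless \cite{Cheon12}; the paper gives no proof of it, so there is nothing to compare your proposal against.

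On its own merits, your two-regime strategy is a sensible outline, and you correctly flag the main sticking point yourself: the near-uniform regime needs a quantitative lower bound on $\mathrm{per}(A)$, and obtaining one via Sinkhorn scaling is delicate because a nonnegative matrix with near-uniform row and column sums may still have a zero pattern that does not admit a doubly stochastic scaling at all (Sinkhorn's theorem requires full indecomposability), or admits one only with diagonal factors far from the identity. Until that is resolved---whether by a capacity argument in the style of Gurvits or by some other route---the proposal is a plan rather than a proof. The balancing of $\delta$ is also left implicit; you would need to carry it through to check that the specific order $O(n^{4-n}e^{2n})$ actually appears, rather than some other exponentially small quantity.
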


Aside from the unicity of the maximum, it is easy to see that Conjecture \ref{conj:dittert} is equivalent (with $n=r$) to our claim $\lambda(e \otimes e) = \frac{2r!}{r^r} - \frac{(r!)^2}{r^{2r}}$, where $e$ is an $r$-edge. Thus any proof of this claim must give a proof of the notoriously hard van der Waerden conjecture.

Let us now note that Target \ref{tar:target} implies a much stronger statement. Write $e^{\otimes k}$  for $e \otimes \ldots \otimes e$ ($k$ times). Then we would like that
\begin{equation}
\label{eq:general}
\lambda(e^{\otimes k}) = \binom{k}{1}\frac{r!}{r^r} - \binom{k}{2}\frac{(r!)^2}{r^{2r}} + \ldots + (-1)^{k-1}\binom{k}{k}\frac{(r!)^{k}}{r^{kr}},
\end{equation}
To see this, imagine the $r$-graph $e^{\otimes k}$ as an $r \times r \times \ldots \times r$ $k$-dimensional matrix. We want that $\lambda(e^{\otimes k})$ equals $p_{e^{\otimes k}}(\frac{1}{r^k}, \ldots, \frac{1}{r^k})$. We evaluate this polynomial by inclusion-exclusion: first project onto a single coordinate and sum up all the terms after that coordinate; then consider two coordinates and subtract the terms that were added twice etc. Equivalently one can expand the conjectured identity $1 - \lambda(e^{\otimes k}) = (1-\lambda(e))^k$.

While studying hashing, Hajek made the following conjecture.
\begin{conjecture}[Hajek, 1987, \cite{Hajek87}]
\label{conj:hajek}
Let $k$ and $n$ be positive integers and for $1 \leq j \leq k$, let $S_j$ denote the collection of subsets $L$ of $[n]^k$ such that $L$ has cardinality $n$ and no two distinct elements of $L$ have the same $j$ coordinate. Let $S := \cup S_j$ and define the multinomial $F_{n, k}(x) := \sum_{L \in S} \prod_{i \in L} x_i$. Then on $\Delta_{n^k}$, $F_{n, k}$ attains its maximum only at $(\frac{1}{n^k}, \ldots, \frac{1}{n^k})$.
\end{conjecture}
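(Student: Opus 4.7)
The plan is to attack Conjecture~\ref{conj:hajek} via the inclusion-exclusion decomposition $F_{n,k}(x) = \sum_{\emptyset \neq J \subseteq [k]} (-1)^{|J|+1} F_J(x)$, where $F_J(x) := \sum_{L \in \bigcap_{j \in J} S_j} \prod_{i \in L} x_i$. Parametrizing each $L \in \bigcap_{j \in J} S_j$ by picking one distinguished coordinate as an indexing of the $n$ elements of $L$ and letting the remaining $|J|-1$ distinguished coordinates range over all permutations, one checks that $F_J(x)$ equals the $|J|$-dimensional permanent of the marginal tensor $M^J_{c_1, \ldots, c_{|J|}} := \sum_{y \in [n]^k : y_{i_\ell} = c_\ell\text{ for all }\ell} x_y$. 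For $|J| = 1$ this is the product $\prod_c M^{\{i\}}_c$ of axis-marginals, which AM-GM bounds above by $1/n^n$ with equality iff the marginal is uniform; for $|J| = 2$ it is the ordinary permanent of a nonnegative matrix of total mass one.

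Given the alternating signs, the natural next step is to pair even-$|J|$ terms against odd-$|J|$ terms, seeking lower bounds in the even cases and upper bounds in the odd cases. I would try induction on $k$: the base case $k = 1$ is exactly AM-GM on products, and the inductive step would condition on one coordinate direction, writing $x$ as a convex combination across the $n$ slices along that axis and expressing $F_{n,k}$ in terms of $F_{n,k-1}$ on each slice plus cross-slice contributions controlled by permanent inequalities. A more ambitious alternative, borrowed from Gurvits's hyperbolic-polynomial proof of Theorem~\ref{thm:waerden}, would be to exhibit a Lorentzian or real-stable structure on a suitable transform of $F_{n,k}$, perhaps on $1 - n!\,F_{n,k}(x)$, from which strict log-concavity would force uniqueness of the maximum at the uniform vector.

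The main obstacle is transparent: already the case $k = 2$ of Hajek's conjecture coincides with Dittert's conjecture (Conjecture~\ref{conj:dittert}), which is open and currently known only in the approximate form of Cheon-Wanless (Theorem~\ref{thm:wanless}). Any full proof of Hajek thus subsumes Dittert, and hence the Egorychev-Falikman theorem itself. Moreover, the naive pairing scheme demands lower bounds on higher-dimensional permanents of tensors whose slice sums need not be uniform, which is strictly subtler than van der Waerden, as the relevant marginals $M^J$ generically fail to be ``doubly stochastic''. For the uniqueness clause, one needs strict inequality throughout: AM-GM equality for $|J| = 1$ forces all axis-marginals of $x$ to be uniform, and the Egorychev-Falikman rigidity in the $|J| = 2$ term would force $x$ itself to be uniform on the grid; propagating this through $|J| \geq 3$ would require a rigidity statement for multidimensional permanents of comparable strength. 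Absent a breakthrough on Dittert, the realistic near-term target is an approximate Hajek bound $\max F_{n,k} \leq F_{n,k}(1/n^k, \ldots, 1/n^k) + \eps_{n,k}$ obtained by extending the Cheon-Wanless estimate inductively in $k$.
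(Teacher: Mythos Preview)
Your proposal is a plan to \emph{prove} Conjecture~\ref{conj:hajek}, but the paper does not prove this statement---it records that the conjecture is \emph{false}. Immediately after stating it, the paper writes: ``Unfortunately Conjecture~\ref{conj:hajek} was shown to be false for $n=3$ and $k=4$ by K\"orner and Marton. \ldots\ The counterexample is a construction using the tetra-code, a self-dual code in $\mathbb{F}_3^4$.'' So there is no ``paper's own proof'' to compare against; the relevant content in the paper is a disproof by citation.

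Consequently, every strategy you outline must break somewhere, and it is worth being explicit about where. Your inclusion--exclusion decomposition $F_{n,k}=\sum_{\emptyset\neq J\subseteq[k]}(-1)^{|J|+1}F_J$ and the identification of $F_J$ with a $|J|$-dimensional permanent of a marginal tensor are correct and match the paper's informal discussion around \eqref{eq:general}. The failure is not in the decomposition but in the hoped-for inequality: the pairing of odd against even terms cannot be made tight at the uniform point for all $(n,k)$, because for $(n,k)=(3,4)$ there is a non-uniform $x\in\Delta_{81}$ with $F_{3,4}(x)>F_{3,4}(1/81,\ldots,1/81)$. In particular, the inductive step ``condition on one axis and reduce to $F_{n,k-1}$ plus cross-slice terms controlled by permanent inequalities'' cannot go through from $k=3$ to $k=4$ when $n=3$. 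Your Lorentzian/real-stable idea would likewise have to fail: any genuine log-concavity of $1-n!\,F_{n,k}$ along the simplex would force the uniform point to be the unique maximizer, contradicting the tetra-code example.

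You correctly flag that $k=2$ is Dittert and hence already a serious obstruction, but the situation is worse than ``hard'': for $k\ge 4$ the target inequality is simply not true. The realistic salvage you mention at the end---an approximate bound $\max F_{n,k}\le F_{n,k}(1/n^k,\ldots,1/n^k)+\eps_{n,k}$---is in fact the right direction, and the paper points to the K\"orner--Marton entropy bound (Theorem~\ref{thm:korner}) as the known quantitative statement of this type.
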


Aside from the unicity of the maximum, Conjecture \ref{conj:hajek} is clearly equivalent to \eqref{eq:general} with $n=r$.

Unfortunately Conjecture \ref{conj:hajek} was shown to be false for $n = 3$ and $k = 4$ by K{\"o}rner and Marton \cite{Korner88}. This in particular disproves Target \ref{tar:target}. The counterexample is a construction using the tetra-code, a self-dual code in $\mathbb{F}_3^4$. K\"{o}rner and Marton also give an upper bound to (their equivalent notion of) $\lambda(e^{\otimes k})$, using graph entropy.

\begin{theorem}[K\"{o}rner-Marton, \cite{Korner88}]
\label{thm:korner}
For any $r$ and $k$ we have that
\begin{equation*}
\log_2\frac{1}{1-\frac{r!}{r^r}} \leq \frac{1}{k}\log_2 \frac{1}{1-\lambda(e^{\otimes k})} \leq \frac{r!}{r^{r-1}}.
\end{equation*}
\end{theorem}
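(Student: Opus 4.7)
The bound unpacks into two separate inequalities: the left one asserts $1 - \lambda(e^{\otimes k}) \leq (1-\mu)^k$ and the right one asserts $1 - \lambda(e^{\otimes k}) \geq 2^{-kr\mu}$, where $\mu := r!/r^r$. The two have very different flavors, so I would attack them separately.

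For the left inequality I would exhibit a single good weight vector. The edges of $e^{\otimes k}$ are precisely the $r$-subsets of $[r]^k$ that form a coordinate transversal in at least one of the $k$ directions. Writing $A_j$ for the family of $r$-subsets transversal in direction $j$ and $A_J := \bigcap_{j \in J} A_j$, a short count (canonically order the elements of the subset by their coordinate in any chosen direction in $J$) gives $|A_J| = (r!)^{|J|-1} r^{r(k-|J|)}$. Evaluating $p_{e^{\otimes k}}$ at the uniform vector $\mathbf{x}^0 \in \Delta_{r^k}$ and applying inclusion--exclusion on the events $A_1, \ldots, A_k$,
\[
p_{e^{\otimes k}}(\mathbf{x}^0) = \frac{r!\,|E(e^{\otimes k})|}{r^{kr}} = \sum_{j=1}^{k}(-1)^{j-1}\binom{k}{j}\mu^j = 1 - (1-\mu)^k,
\]
so $\lambda(e^{\otimes k}) \geq 1 - (1-\mu)^k$, which is the left inequality.

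The right inequality is substantially deeper and I would pursue an information-theoretic route. For any $\mathbf{x} \in \Delta_{r^k}$ interpreted as a probability distribution on $[r]^k$, drawing $r$ i.i.d.\ samples $Y_1, \ldots, Y_r$ gives that $1 - p_{e^{\otimes k}}(\mathbf{x})$ is exactly the probability that in no direction $j \in [k]$ are $(Y_1)_j, \ldots, (Y_r)_j$ all distinct. In a single direction alone, AM--GM (equivalently van der Waerden's permanent inequality, Theorem \ref{thm:waerden}) bounds the all-distinct probability by $\mu$. To combine the $k$ one-dimensional estimates in spite of the dependence among the coordinates of a single sample, I would introduce a hypergraph entropy functional $H(G)$ and prove two ingredients: (a) subadditivity under the product, $H(G \otimes H) \leq H(G) + H(H)$, and (b) the comparison $\log_2(1/(1-\lambda(G))) \leq r\,H(G)$. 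A direct evaluation should give $H(e) \leq \mu$; iterating (a) and applying (b) then yields $\log_2(1/(1-\lambda(e^{\otimes k}))) \leq k r \mu = k r!/r^{r-1}$, which is the right inequality.

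The main obstacle is establishing the comparison (b) in exactly the right form, since the hypergraph setting lacks the clean independent-set-valued random variable that underpins classical graph entropy. The route I would try is to rewrite $1 - \lambda(G)$ as the optimum of a product over fractional covers of the complementary hypergraph by ``distinguishing'' independent sets, so that $-\log_2$ converts that product into a Shannon-type sum, manifestly subadditive under $\otimes$ with leading coefficient $r$ (matching the edge size). Carrying this recasting out cleanly, with exactly the right $r$-factor, is the step I expect to absorb most of the effort, and is essentially the content of the K\"orner--Marton argument.
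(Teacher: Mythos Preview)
The paper does not prove this theorem; it is quoted as a result of K\"orner and Marton, accompanied only by the remark that ``the lower bound follows from the construction given before.'' Your lower-bound argument is correct and is exactly that construction: evaluating $p_{e^{\otimes k}}$ at the uniform vector on $[r]^k$ and carrying out inclusion--exclusion over the $k$ transversal events gives $\lambda(e^{\otimes k}) \geq 1-(1-\mu)^k$, which is the left inequality. This is equivalently what one obtains by iterating the inequality $\lambda(G\otimes H) \geq \lambda(G)+\lambda(H)-\lambda(G)\lambda(H)$ proved just above the theorem in the paper.

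For the upper bound there is no in-paper argument to compare against. Your proposed route---a hypergraph-entropy functional $H(\cdot)$ that is subadditive under $\otimes$, together with a comparison $\log_2\frac{1}{1-\lambda(G)} \leq r\,H(G)$ and the base value $H(e)=\mu$---is indeed the shape of the graph-entropy method in \cite{Korner88}, and you correctly identify step~(b) as the substantive obstacle. As written, however, this is a plan rather than a proof: no definition of $H$ is given, and until one is fixed neither the subadditivity (a), the comparison (b), nor the evaluation $H(e)=\mu$ can be checked. Your closing paragraph is candid about this gap, so the proposal should be read as a faithful outline of the cited argument rather than an independent proof.
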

The lower bound follows from the construction given before. Interestingly, as shown in \cite{Korner88}, 
any improvement on these bounds would most likely give an improvement on the best known bounds for the perfect hashing problem in a special case.

\subsection{\normalsize A conjecture about the set of all Tur\'an densities}

Most of our interest in Target \ref{tar:target} stems from the following.

\begin{proposition}
\label{prop:aboutConj}
If $\Pi_{\infty}^{(r)}$ is closed under $\otimes_2$ for all $r \geq 2$ then $\overline{\cup_{r \geq 2} \Pi_{\fin}^{(r)}} = \overline{\cup_{r \geq 2} \Pi_{\infty}^{(r)}} = [0,1]$.
\end{proposition}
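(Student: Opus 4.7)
The plan is to exploit the assumed closure of $\Pi_{\infty}^{(r)}$ under $\otimes_2$ together with one very small element already known to lie in $\Pi_{\infty}^{(r)}$, namely $\lambda(e) = r!/r^r$ where $e$ is a single $r$-edge. Iterating $\otimes_2$ on this element produces the geometric orbit $\{1-(1-\delta_r)^k\}_{k\ge 1} \subseteq \Pi_{\infty}^{(r)}$, and since $\delta_r := r!/r^r \to 0$ as $r \to \infty$, the union of these orbits will be $\eps$-dense in $[0,1]$ for every $\eps > 0$.

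First I would reduce the two stated equalities to one. By Lemma \ref{lem:lambdaclos} we have $\overline{\Lambda^{(r)}} = \Pi_{\infty}^{(r)}$, and by Theorem \ref{thm:pikfinite} $\Lambda^{(r)} \subseteq \Pi_{\fin}^{(r)} \subseteq \Pi_{\infty}^{(r)}$; hence $\overline{\Pi_{\fin}^{(r)}} = \Pi_{\infty}^{(r)}$ for every $r$, and therefore
\begin{equation*}
\overline{\bigcup_{r \ge 2} \Pi_{\fin}^{(r)}} \;=\; \overline{\bigcup_{r \ge 2} \Pi_{\infty}^{(r)}}.
\end{equation*}
The inclusion $\overline{\bigcup_{r \ge 2}\Pi_{\infty}^{(r)}} \subseteq [0,1]$ is trivial, so it suffices to show that every $y \in [0,1]$ is a limit point of $\bigcup_{r \ge 2} \Pi_{\infty}^{(r)}$. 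The case $y = 1$ is immediate from $1 = \pi(\emptyset) \in \Pi_{\fin}^{(r)}$, so fix $y \in [0,1)$ and $\eps > 0$. Choose $r$ large enough that
\begin{equation*}
\delta_r \;:=\; \frac{r!}{r^r} \;=\; \prod_{i=1}^{r}\frac{i}{r} \;<\; \eps.
\end{equation*}
Theorem \ref{thm:pikfinite} gives $\delta_r = \lambda(e) \in \Pi_{\fin}^{(r)} \subseteq \Pi_{\infty}^{(r)}$, and the assumed closure of $\Pi_{\infty}^{(r)}$ under $\otimes_2$ yields by induction
\begin{equation*}
\alpha_k \;:=\; \underbrace{\delta_r \otimes_2 \cdots \otimes_2 \delta_r}_{k \text{ times}} \;=\; 1-(1-\delta_r)^k \;\in\; \Pi_{\infty}^{(r)}, \qquad k \ge 1.
\end{equation*}
Setting $\alpha_0 := 0$, the sequence $\{\alpha_k\}_{k\ge 0}$ increases from $0$ to $1$ with consecutive gaps $\alpha_k - \alpha_{k-1} = \delta_r(1-\delta_r)^{k-1} \le \delta_r$. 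Choosing $k$ minimal with $\alpha_k \ge y$, we obtain $0 \le \alpha_k - y \le \alpha_k - \alpha_{k-1} \le \delta_r < \eps$, so $y$ lies within $\eps$ of $\alpha_k \in \Pi_{\infty}^{(r)}$, proving density.

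There is no substantial obstacle. The hypothesis is invoked in its crudest form, on the single small seed $\delta_r$; the gap estimate is an elementary geometric-series observation; and the key point is simply that by varying $r$ we vary the mesh size $\delta_r$, so no individual $\Pi_{\infty}^{(r)}$ needs to be dense --- the union automatically is. The only mildly delicate ingredient is the transfer of density from $\Pi_{\fin}^{(r)}$ to $\Pi_{\infty}^{(r)}$, but this is already packaged in Lemma \ref{lem:lambdaclos}.
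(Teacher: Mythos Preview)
Your proof is correct and follows essentially the same approach as the paper: both arguments seed with $\delta_r = r!/r^r \in \Pi_{\infty}^{(r)}$, iterate $\otimes_2$ to obtain the orbit $1-(1-\delta_r)^k$, and use $\delta_r \to 0$ to make this orbit $\eps$-dense in $[0,1]$. The only cosmetic difference is that the paper passes to the multiplicative scale $s = (1-\delta_r)^{-1}$ and estimates $|s^n - \ell|$, whereas you estimate the additive gaps $\alpha_k - \alpha_{k-1} = \delta_r(1-\delta_r)^{k-1}$ directly; your bookkeeping is arguably a touch cleaner, but the substance is identical.
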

\begin{proof}
As $\Pi_{\fin}^{(r)}$ is dense in $\Pi_{\infty}^{(r)}$ for all $r$, we only need to prove that $\overline{\cup_{r \geq 2} \Pi_{\infty}^{(r)}} = [0,1]$.

Fix $\alpha \in (0, 1)$ and let $\eps > 0$ arbitrary. We prove that there exists $\gamma \in \cup_{r \geq 2} \Pi_{\infty}^{(r)}$ with $|\gamma - \alpha| < \eps$.

Write $\alpha = 1 - \frac{1}{\ell}$ for some real $\ell > 1$. Let $\delta > 0$ such $\delta \ell  < \eps$. As $\frac{r!}{r^r} \in \Pi_{\infty}^{(r)}$ (see the discussion after Theorem \ref{thm:pikfinite}) and $\frac{r!}{r^r} \rightarrow 0$, there exists $r$ and $s$ with $1 \leq s < 1 + \delta$ and $s < \ell$ such that $\beta := 1 - \frac{1}{s} \in \Pi_{\infty}^{(r)}$. Then for any $n \geq 1$, $\beta^{\otimes n} := \beta \otimes_2 \beta \otimes_2 \ldots \otimes_2 \beta$ ($n$ times) is an element of $\Pi_{\infty}^{(r)}$, by assumption. However, $\beta^{\otimes n} = 1 - \frac{1}{s^n}$ by definition.

Choose $n$ such that $s^n \leq \ell \leq s^{n+1}$. Then $n \geq 1$ and
\begin{equation*}
\ell - s^n \leq s^{n+1} - s^n \leq \delta s^n \leq \delta \ell < \eps.
\end{equation*}
Consequently
\begin{equation*}
|\beta^{\otimes n} - \alpha| = \frac{\ell - s^n}{\ell s^n} < \eps,
\end{equation*}
as $\ell s^n \geq 1$. This proves the claim.
\end{proof}

The proof of Proposition~\ref{prop:aboutConj} could still be carried over if Target 1 would only hold for hypergraphs of the form $e^{\otimes k}$, where $e$ is an $r$-edge. However, as we have seen, this is not the case. Nevertheless, this made us propose Conjecture \ref{conj:closure}, which was recently proved by Pikhurko \cite{pikhurko15} in a different way.

\subsection{\normalsize Some more results}

One can lift $\otimes_2$ to $\Pi_{\infty}$, and in this setting the law holds.

More precisely, define a binary operation $\circ$ on the set $\mathbb{R} \times \mathbb{N}$ (which contains $\Pi_{\infty}$) as follows:
\begin{align*}
  \circ \colon (\mathbb{R} \times \mathbb{N}) \times (\mathbb{R} \times \mathbb{N}) &\to \mathbb{R} \times \mathbb{N}\\
  (\alpha, r) \times (\beta, s) \phantom{xx} &\mapsto ((\alpha+\beta-\alpha\beta)\binom{r+s}{r}\frac{r^rs^s}{(r+s)^{r+s}}, r+s).
\end{align*}

By using a similar trick as in Theorem \ref{thm:global} one can prove the following result.

\begin{theorem}
\label{thm:lifted}
$(\Pi_{\infty}, \circ)$ is a commutative cancellative semigroup.
\end{theorem}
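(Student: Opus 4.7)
The plan is to mirror the proof of Theorem \ref{thm:global}. Write $C_{r,s} := \binom{r+s}{r}\frac{r^r s^s}{(r+s)^{r+s}}$, so $\circ$ sends $((\alpha, r),(\beta, s))$ to $((\alpha+\beta-\alpha\beta)\,C_{r,s},\,r+s)$. I will dispatch the algebraic properties by direct computation, and then introduce a hypergraph operation $G \star H$ whose Lagrangian realizes $\lambda(G)+\lambda(H)-\lambda(G)\lambda(H)$ up to the same universal factor $C_{r,s}$, playing the same role for $\circ$ that $G * H$ plays for $*$ in Theorem \ref{thm:global}.

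Commutativity of $\circ$ is immediate from the symmetry of $\alpha+\beta-\alpha\beta = 1-(1-\alpha)(1-\beta)$ and of $C_{r,s}$ in $(r,s)$. Cancellativity reduces to the identity $(1-\alpha)(\beta-\gamma)=0$. The associativity check is a direct expansion and rests on the telescoping identity $C_{r,s}\,C_{r+s,t} = C_{s,t}\,C_{r,s+t}$ already used implicitly in Theorem \ref{thm:global}.

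For closure under $\circ$, given an $r$-graph $G$ and a disjoint $s$-graph $H$, I define the $(r+s)$-multigraph $G \star H$ on $V(G) \dot\cup V(H)$ whose edges are the $(r+s)$-multisets of the form $e \uplus f$, where $e$ is an $r$-multiset in $V(G)$, $f$ is an $s$-multiset in $V(H)$, and $e \in E(G)$ or $f \in E(H)$. Writing $\mathbf z = (\mathbf x, \mathbf y)$ with $S = \sum_i x_i$ and $T = \sum_j y_j$, a direct inclusion-exclusion, combined with the identity $p_K(\mathbf y) = T^s$ for the complete $s$-multigraph $K$ on $V(H)$ (an immediate instance of Lemma \ref{lem:duality}), yields
\begin{equation*}
p_{G \star H}(\mathbf z) \;=\; \binom{r+s}{r}\,\bigl[\,p_G(\mathbf x)\,T^s \,+\, S^r\,p_H(\mathbf y) \,-\, p_G(\mathbf x)\,p_H(\mathbf y)\,\bigr].
\end{equation*}
Substituting $\mathbf x = S\mathbf a$, $\mathbf y = T\mathbf b$ with $\mathbf a, \mathbf b$ in the appropriate simplices, and using homogeneity $p_G(S\mathbf a) = S^r p_G(\mathbf a)$, the right-hand side factors as $\binom{r+s}{r}\,S^r T^s\,[\,1-(1-p_G(\mathbf a))(1-p_H(\mathbf b))\,]$. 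Since $p_G, p_H \in [0,1]$, this bracketed factor is nondecreasing in each argument, so its simplex-maximum equals $\lambda(G)+\lambda(H)-\lambda(G)\lambda(H)$, attained at the individual optimal vectors; Proposition \ref{prop:max} supplies $\max_{S \in [0,1]} S^r T^s = r^r s^s/(r+s)^{r+s}$ at $S = r/(r+s)$, and the two maxima are attained simultaneously. Hence $\lambda(G \star H) = (\lambda(G)+\lambda(H)-\lambda(G)\lambda(H))\,C_{r,s}$.

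By Lemma \ref{lem:pattern}, $\lambda(G \star H) \in \Pi_\infty^{(r+s)}$. For $(\alpha, r), (\beta, s) \in \Pi_\infty$ with $r, s \geq 1$, Lemma \ref{lem:lambdaclos} supplies simple $r$- and $s$-graphs $G_n, H_n$ with $\lambda(G_n) \to \alpha$ and $\lambda(H_n) \to \beta$; then $\lambda(G_n \star H_n) \to (\alpha+\beta-\alpha\beta)\,C_{r,s}$ in $\Pi_\infty^{(r+s)}$, which is closed, concluding the argument. The boundary case $r = 0$ or $s = 0$ reduces to $(1, 0) \circ (\beta, s) = (\beta, s)$. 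The main obstacle is conceptual: even when $G$ and $H$ are simple, $G \star H$ is inherently a \textit{multigraph}, because the clean polynomial identity above requires the ``all $s$-subsets'' factor to be the complete $s$-multigraph polynomial $T^s$, rather than the complete simple $s$-graph polynomial (which is only $s!\,e_s(\mathbf y)$, not $T^s$). This is precisely the situation for which Lemma \ref{lem:pattern} was tailored; Theorem \ref{thm:pikfinite} alone, which applies only to simple $r$-graphs, would not suffice.
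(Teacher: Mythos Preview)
Your proposal follows exactly the route the paper sketches: your multigraph $G \star H$ coincides with the paper's $G \circ H$, and the Lagrangian computation together with the passage to limits via Lemmas \ref{lem:pattern} and \ref{lem:lambdaclos} parallels the proof of Theorem \ref{thm:global} as intended. One small slip: in the boundary case you have $(1,0)\circ(\beta,s)=(1,s)$, not $(\beta,s)$, since $1+\beta-1\cdot\beta=1$; closure is unaffected because $1\in\Pi_\infty^{(s)}$.
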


The associated construction is the following. If $G$ is an $r$-graph and $H$ is an $s$-graph on disjoint vertex sets, we define $G \circ H$ as the $(r+s)$-multigraph on vertex set $V(G) \cup V(H)$ and edge set
\begin{equation*}
\{e \cup f : e \in E(G), f \in V(H)^{(s)}\} \cup \{e \cup f : e \in V(G)^{(r)}, f \in E(H)\}
\end{equation*}
The proof then proceeds similarly to that of Theorem \ref{thm:global}, and so we shall not present it here. Unfortunately $\circ$ and $*$ do not define a ring structure on $\Pi_{\infty}$.

In fact other relations concerning Tur\'an densities can be obtained, though none seem to define any interesting algebraic structure. As an example, we have the following theorem.

\begin{theorem}
\label{thm:rational_map}
For any $r \geq 2$ define the map $\mathfrak{j} : [0,1] \rightarrow [0,1]$ by $\mathfrak{j}(x) = \left(\frac{r - 1}{r - x}\right)^{r-1}$. Then $\mathfrak{j}(\Pi_{\infty}^{(r)}) \subsetneq \Pi_{\infty}^{(r)}$.
\end{theorem}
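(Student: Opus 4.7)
The plan is to realise $\mathfrak{j}(\alpha)$ as the Lagrangian of an explicit $r$-multigraph built from any $r$-graph $G$ with $\lambda(G)=\alpha$, and then to invoke Lemma~\ref{lem:pattern} together with the density statement Lemma~\ref{lem:lambdaclos}. Given $G$ on vertex set $V$, I would construct the $r$-multigraph $G^{+}$ on $V\cup\{v_{0}\}$ whose edges are those of $G$ together with every $r$-multiset on $V\cup\{v_{0}\}$ containing $v_{0}$ with multiplicity exactly one. With $y$ denoting the weight at $v_{0}$ and $s=\sum_{i}x_{i}=1-y$, the multinomial theorem collapses the contributions of all the new edges into a single clean term:
\[
p_{G^{+}}(x_{1},\ldots,x_{n},y)=p_{G}(x_{1},\ldots,x_{n})+r\,y\,s^{r-1}.
\]

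I would then compute $\lambda(G^{+})$. By homogeneity, $p_{G}(x)\le s^{r}\alpha$, with equality attained by scaling any optimal vector for $G$, so
\[
\lambda(G^{+})=\max_{y\in[0,1]}\bigl(\alpha(1-y)^{r}+ry(1-y)^{r-1}\bigr).
\]
Differentiation locates the maximiser at $y_{\ast}=(1-\alpha)/(r-\alpha)$, and at this value the two terms combine to give exactly $\bigl((r-1)/(r-\alpha)\bigr)^{r-1}=\mathfrak{j}(\alpha)$. Since $G^{+}$ is a multigraph rather than a simple graph, Lemma~\ref{lem:pattern} (and not its simple-graph strengthening Theorem~\ref{thm:pikfinite}) applies and yields $\mathfrak{j}(\alpha)\in\Pi_{\infty}^{(r)}$ whenever $\alpha\in\Lambda^{(r)}$. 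Extending to general $\alpha\in\Pi_{\infty}^{(r)}$ is then soft: Lemma~\ref{lem:lambdaclos} provides a sequence in $\Lambda^{(r)}$ converging to $\alpha$, $\mathfrak{j}$ is continuous on $[0,1]$, and $\Pi_{\infty}^{(r)}$ is closed.

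Strict inclusion follows immediately: $\mathfrak{j}$ is strictly increasing on $[0,1]$ with $\mathfrak{j}(0)=(1-1/r)^{r-1}>0$, so the image sits inside $[(1-1/r)^{r-1},1]$, whereas $0\in\Pi_{\infty}^{(r)}$ (witnessed by $\mathcal{F}=\{e\}$, a single edge). The one subtle design choice is to let $v_{0}$ carry multiplicity exactly one while allowing arbitrary multiplicities on $V$; had one instead added $v_{0}$ as a genuinely universal simple vertex (all proper $r$-sets through $v_{0}$), the extra contribution would be $r!\,y\,e_{r-1}(x)$, which only limits to $rys^{r-1}$ under a blow-up $t\to\infty$, and so would force a supersaturation/blow-up argument. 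The multiset construction collapses everything into a single application of Lemma~\ref{lem:pattern}, at the price of landing in $\Pi_{\infty}^{(r)}$ rather than $\Pi_{\fin}^{(r)}$.
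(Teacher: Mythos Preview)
Your proposal is correct and is essentially the paper's own argument: your $G^{+}$ is exactly the multigraph $\mathfrak{j}(G)$ described there (edge set $E(G)\cup\{\{v\}\cup e:e\in V(G)^{(r-1)}\}$), and the passage from $\lambda(\mathfrak{j}(G))=\mathfrak{j}(\lambda(G))$ to $\mathfrak{j}(\Pi_\infty^{(r)})\subseteq\Pi_\infty^{(r)}$ via Lemma~\ref{lem:pattern}, Lemma~\ref{lem:lambdaclos} and continuity is precisely what the paper sketches. You also spell out the strict inclusion (via $0\in\Pi_\infty^{(r)}\setminus\mathfrak{j}([0,1])$), which the paper leaves implicit.
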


Again the construction is the only important step of the proof. For any $r$-graph $G$, define $\mathfrak{j}(G)$ as the $r$-multigraph on vertex set $\{v\} \cup V(G)$ ($v$ is a vertex not belonging to $G$) and edge set $E(G) \cup \{\{v\} \cup e : e \in V(G)^{(r-1)}\}$. Then one can show that $\lambda(\mathfrak{j}(G)) = \mathfrak{j}(\lambda(G))$ and Theorem \ref{thm:rational_map} follows by continuity.

\section{\normalsize Open problems}

Our investigation ends up with several open problems, which we now discuss.

\subsection{\normalsize{The set of all Tur\'an densities}}

In view of Conjecture \ref{conj:closure}, one can ask what is the set $\cup_{r \geq 2} \Pi_{\infty}^{(r)}$. We could not even solve the following.

\begin{problem}
Prove or disprove that $\limsup_{r \rightarrow \infty} \Pi_{\infty}^{(r)} = \cup_{r \geq 2} \Pi_{\infty}^{(r)}$.
\end{problem}

Here the limit is taken under the discrete metric, that is, an element belongs to $\limsup_{r \rightarrow \infty} \Pi_{\infty}^{(r)}$ if and only if it belongs to $\Pi_{\infty}^{(r)}$ for infinitely many $r$. By Theorem \ref{thm:sid}, $1 - \frac{1}{2^p} \in \limsup_{r \rightarrow \infty} \Pi_{\infty}^{(r)}$ for any $p \geq 1$, and to the best of our knowledge no other values from this set have been determined. Moreover Sidorenko's proof of Theorem \ref{thm:sid} does not generalize to other Tur\'an densities (\cite{Keevash03}).

\subsection{\normalsize{Polynomials preserving Tur\'an densities}}

By Theorem \ref{thm:global}, the polynomial $\frac{1}{2^{2r}}\binom{2r}{r}x^2$ takes values in $\Pi_{\infty}^{(2r)}$ when evaluated at an element of $\Pi_{\infty}^{(r)}$. The following question remains open.
\begin{problem}
\label{pr:poly}
For any $r \geq 3$ find a polynomial $f \in \mathbb{Q}[x]$ such that for any Tur\'an density $\alpha$ for $r$-graphs, $f(\alpha)$ is also a Tur\'an density for $r$-graphs.
\end{problem}
For $r=2$ one such polynomial is $2x-x^2$ (indeed, this is nothing else than our rule $\otimes_2$). Moreover an example of a rational function with the required properties is given by Theorem \ref{thm:rational_map}.

\subsection{\normalsize{The algebraic degree of Tur\'an densities}}

As the reader recalls, this paper was started by Question \ref{ques:fox}. We have not been able to resolve it, though $\oplus_r$ prompts the following question.

\begin{problem}
\label{pr:degree}
For some $r \geq 3$, find $\alpha \in \Pi_{\fin}^{(r)}$ algebraic with minimal polynomial of degree greater than $r-1$, or show that none exists.
\end{problem}

\subsection{\normalsize Other finiteness theorems}

In view of Theorem \ref{thm:local} it is natural to ask the following question.

\begin{question}
Do Theorems \ref{thm:global}, \ref{thm:lifted} and \ref{thm:rational_map} have finite counterparts? That is, is $\Pi_{\fin}$ (respectively $\Pi_{\fin}^{(r)}$) closed under the described operations?
\end{question}

I expect that the methods of this paper would suffice to give a positive answer, though I have not pursued this line of inquiry myself.

\subsection{\normalsize{The Hausdorff dimension of $\Pi_{\infty}^{(r)}$}}

Recall the map $\mathfrak{h} : [0,1) \rightarrow [1, +\infty)$ defined by $\mathfrak{h}(x) = \left(\frac{1}{1-x}\right)^{1/(r-1)}$ in Corollary \ref{cor:lebesgue}. It is an isomorphism between $A := \Pi_{\infty}^{(r)}\setminus\{1\}$ and a subsemigroup of $(\mathbb{R}, +)$. As $\mathfrak{h}^{-1}$ is Lipschitz, if $\Pi_{\infty}^{(r)}$ has positive Hausdorff dimension then so does $\mathfrak{h}(A)$. What can we say about $\mathfrak{h}(A)$ in this case?

Recall that a subset of $\mathbb{R}$ is called analytic if it is the continuous image of some Borel set in some Euclidean space $\mathbb{R}^n$.

\begin{proposition}
\label{prop:dense}
Let $\mathbb{G}_r$ be the subgroup of $\mathbb{R}$ generated by $\mathfrak{h}(A)$ under addition. Then $\mathbb{G}_r$ is an analytic set and for any $r \geq 3$, it is dense in $\mathbb{R}$.
\end{proposition}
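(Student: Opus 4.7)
My plan is to first establish that $\mathbb{G}_r$ is an analytic set and then use the standard dichotomy for subgroups of $(\mathbb{R}, +)$ to reduce density to the production of a single irrational element.

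For analyticity, I would start from the fact that $A = \Pi_{\infty}^{(r)} \setminus \{1\}$ is Borel, since $\Pi_{\infty}^{(r)}$ is closed in $[0,1]$ by Pikhurko's theorem. The map $\mathfrak{h}$ is continuous on $[0,1)$, so $\mathfrak{h}(A)$ is analytic as the continuous image of a Borel subset of a Polish space. Because $\mathfrak{h}$ is a semigroup isomorphism between $([0,1), \oplus_r)$ and $([1, +\infty), +)$ and $A$ is closed under $\oplus_r$ (by Lemma \ref{lem:local_inf}), the image $\mathfrak{h}(A)$ is a subsemigroup of $(\mathbb{R}, +)$. Hence $S := \mathfrak{h}(A) \cup \{0\}$ is a submonoid of $(\mathbb{R}, +)$, so every element of $\mathbb{G}_r$ can be written as $u - v$ with $u, v \in S$, i.e., $\mathbb{G}_r = S - S$. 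This is the continuous image of $S \times S$ under subtraction, and since analytic sets are closed under products and continuous images, $\mathbb{G}_r$ is analytic.

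For density I would invoke the classical fact that every nontrivial additive subgroup of $\mathbb{R}$ is either cyclic (of the form $c\mathbb{Z}$ for some $c > 0$) or dense. Since $\mathfrak{h}(0) = 1$ belongs to $\mathbb{G}_r$, the group is nontrivial. Suppose for contradiction that $\mathbb{G}_r = c\mathbb{Z}$ for some $c > 0$. Then $1 \in c\mathbb{Z}$ forces $c = 1/n$ for some positive integer $n$, and so $\mathbb{G}_r \subseteq \mathbb{Q}$. On the other hand, by Theorem \ref{thm:pikirrat}, for every $r \geq 3$ there exists an irrational $\alpha \in \Pi_{\fin}^{(r)} \subseteq A$. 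The value $\mathfrak{h}(\alpha) = (1-\alpha)^{-1/(r-1)}$ must also be irrational, because $\mathfrak{h}(\alpha) \in \mathbb{Q}$ would give $\alpha = 1 - \mathfrak{h}(\alpha)^{-(r-1)} \in \mathbb{Q}$. This contradicts $\mathfrak{h}(\alpha) \in \mathfrak{h}(A) \subseteq \mathbb{G}_r \subseteq \mathbb{Q}$, and so $\mathbb{G}_r$ must be dense in $\mathbb{R}$.

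The only delicate point in this plan is the appeal to Pikhurko's deep Theorem \ref{thm:pikirrat}; without an irrational Tur\'an density in $\Pi_{\fin}^{(r)}$, the dichotomy would leave open the possibility that $\mathbb{G}_r = \tfrac{1}{n}\mathbb{Z}$ for some positive integer $n$. This is exactly what happens for $r = 2$, where $\mathfrak{h}(\Pi_{\infty}^{(2)} \setminus \{1\}) = \mathbb{N}$ and $\mathbb{G}_2 = \mathbb{Z}$, which is why the hypothesis $r \geq 3$ cannot be dropped.
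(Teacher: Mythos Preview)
Your proof is correct and follows essentially the same route as the paper. The paper's version is terser: for analyticity it simply notes that the group generated by an analytic set is analytic (implicitly as a countable union of continuous images), while you exploit the semigroup structure of $\mathfrak{h}(A)$ to write $\mathbb{G}_r$ directly as a single difference set $S-S$; for density the paper observes that $\mathbb{G}_r$ contains $\mathbb{Z}$ together with all multiples $n\alpha$ of some irrational $\alpha$ and invokes Diophantine approximation, which is exactly your cyclic/dense dichotomy phrased differently. In both arguments the existence of an irrational element in $\mathfrak{h}(A)$ ultimately rests on Pikhurko's Theorem~\ref{thm:pikirrat}, and you are right to flag this as the one substantive external input.
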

\begin{proof}
$\mathbb{G}_r$ is generated by an analytic set and so it must be analytic too.

Furthermore for $r \geq 3$, $\mathbb{G}_r$ contains $\mathbb{Z}$ (as it contains $1$) and $n \alpha, n \geq 1$, where $\alpha$ is some irrational number. By Diophantine approximation, $\mathbb{G}_r$ is dense in $[0,1]$, and hence dense in $\mathbb{R}$.
\end{proof}

It was a question of Erd\H os and Volkmann \cite{ErdosVolk66} if there exist subrings of $\mathbb{R}$ which are Borel sets and have Hausdorff dimension strictly between $0$ and $1$. This question was resolved by Edgar and Miller in 2003 (a discrete version was proved independently by Bourgain \cite{Bourgain03}).

\begin{theorem}[Edgar-Miller, 2003, \cite{Edgar03}]
\label{thm:volkmann}
If $E \subseteq \mathbb{R}$ is a subring and a Borel (or analytic) set then either $E$ has Hausdorff dimension $0$ or $E = \mathbb{R}$.
\end{theorem}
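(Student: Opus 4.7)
The goal is to show that if $E \subseteq \mathbb{R}$ is an analytic subring with $\dim_H(E) > 0$ then $E = \mathbb{R}$. My plan is to follow the Edgar--Miller strategy: promote positive Hausdorff dimension to positive Lebesgue measure via a sum--product argument, and then use the ring axioms together with Steinhaus's theorem to go from positive measure to the whole line.

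First, since $E$ is analytic, Choquet's capacitability theorem lets me replace $E$ by compact subsets $K \subseteq E$ of Hausdorff dimension arbitrarily close to $s := \dim_H(E)$. The ring structure then provides strong self-affine closure: for any $a, b \in E$ and any compact $K \subseteq E$, both $aK + b$ and $K + K$ are contained in $E$. In particular, if $a \neq 0$ then dilation by $a$ is bilipschitz, so $\dim_H(aK) = \dim_H(K)$, and all of these dilates and sums live inside $E$.

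The core step is a sum--product dichotomy, in the spirit of Bourgain's discretised sum--product theorem: for every $s \in (0, 1)$ there exists $\delta = \delta(s) > 0$ such that any compact $K \subseteq \mathbb{R}$ with $\dim_H(K) = s$ satisfies $\max(\dim_H(K+K),\dim_H(K \cdot K)) \geq s + \delta$. Applied inside $E$, this says that if $\dim_H(E) = s \in (0, 1)$, then some compact subset of $E$ sits inside $E$ with strictly larger dimension --- contradicting the definition of $s$. So the sum--product estimate forces $\dim_H(E) = 1$. Upgrading from $\dim_H(E) = 1$ to positive Lebesgue measure is the second delicate ingredient; Edgar--Miller accomplish this via a Marstrand-type slicing/projection argument that exploits the fact that $E$ is invariant under the full family of affine maps $x \mapsto ax + b$ with $a, b \in E$.

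Once $E$ has positive Lebesgue measure, the endgame is standard. By Steinhaus's theorem, $E - E$ contains an open interval $(-\eps, \eps)$ around $0$, and since $E$ is an additive subgroup, $(-\eps, \eps) \subseteq E$. Moreover $E$ has positive dimension so contains some $a \neq 0$, and then $\{na : n \in \mathbb{Z}\} \subseteq E$ shows $E$ is unbounded; multiplying $(-\eps, \eps)$ by arbitrarily large elements of $E$ (using that $E$ is closed under multiplication) gives $E = \mathbb{R}$. The main obstacle is clearly the sum--product estimate used to force $\dim_H(E) = 1$ --- this is the deep non-elementary input, and in fact Bourgain's independent discrete version \cite{Bourgain03} was established around the same time; the bridge from $\dim_H(E) = 1$ to positive measure is a secondary technical hurdle requiring the full affine invariance of $E$, and is where the ring (as opposed to merely additive group) structure is essential.
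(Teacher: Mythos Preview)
The paper does not prove Theorem~\ref{thm:volkmann}; it is quoted as an external result of Edgar and Miller \cite{Edgar03} and used as a black box in Proposition~\ref{prop:subring}. So there is no ``paper's own proof'' to compare against.

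That said, your sketch deviates from the actual Edgar--Miller argument in one important respect, and contains a gap. You invoke a sum--product dichotomy of the form ``for every compact $K$ with $\dim_H K = s \in (0,1)$, $\max(\dim_H(K+K), \dim_H(K\cdot K)) \geq s + \delta(s)$''. As stated this is false: an arithmetic progression (at some scale) has $\dim_H(K+K)$ essentially equal to $\dim_H K$, and a geometric progression does the same for $K\cdot K$; Bourgain's discretised sum--product theorem requires non-concentration hypotheses on $K$ that you have not arranged and which are not automatic for arbitrary compact subsets of a ring. Edgar and Miller do not go through sum--product at all. Their main tool is Marstrand's projection theorem: if $K \subseteq E$ is compact with $\dim_H K = s > 0$, then $K \times K \subseteq \mathbb{R}^2$ has Hausdorff dimension at least $2s$, and for Lebesgue-almost-every $t$ the projection $K + tK$ has dimension $\min(2s,1)$ (and positive Lebesgue measure once $2s > 1$). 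Because $E$ is analytic of positive dimension it is uncountable and one can locate such a $t$ inside $E$ itself; then $K + tK \subseteq E$. Iterating doubles the dimension until it exceeds $1/2$, after which one more application of Marstrand yields a subset of $E$ of positive Lebesgue measure. From there your Steinhaus endgame is exactly right. So the architecture you propose (positive dimension $\to$ positive measure $\to$ Steinhaus $\to$ $\mathbb{R}$) is correct, but the engine driving the first arrow should be Marstrand-type projection, not sum--product.
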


This implies the following.

\begin{proposition}
\label{prop:subring}
Suppose $\Pi_{\infty}^{(r)}$ is closed under $\otimes_2$. Then $\mathbb{G}_r$ is a subring of $\mathbb{R}$.  If $\Pi_{\infty}^{(r)}$ has positive Hausdorff dimension then $\mathbb{G}_r = \mathbb{R}$.
\end{proposition}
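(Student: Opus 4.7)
The plan is to exploit the fact that $\mathfrak{h}$ already converts $\oplus_r$ into ordinary addition, and show it simultaneously converts $\otimes_2$ into ordinary multiplication. For $\alpha, \beta \in A$, one has $1 - \alpha \otimes_2 \beta = 1 - (\alpha + \beta - \alpha\beta) = (1-\alpha)(1-\beta)$, which is strictly positive, so $\alpha \otimes_2 \beta \in [0,1)$, and
\begin{equation*}
\mathfrak{h}(\alpha \otimes_2 \beta) = \bigl((1-\alpha)(1-\beta)\bigr)^{-1/(r-1)} = \mathfrak{h}(\alpha)\,\mathfrak{h}(\beta).
\end{equation*}
Hence if $\Pi_{\infty}^{(r)}$ is closed under $\otimes_2$, then $\mathfrak{h}(A)$ is closed under ordinary multiplication in $\mathbb{R}$ (and, as already noted, under ordinary addition).

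Next I would verify that $\mathbb{G}_r$ inherits closure under multiplication. Any two elements $x, y \in \mathbb{G}_r$ can be written as $x = \sum_i \epsilon_i a_i$, $y = \sum_j \eta_j b_j$ with $\epsilon_i, \eta_j \in \{-1,+1\}$ and $a_i, b_j \in \mathfrak{h}(A)$. Expanding gives $xy = \sum_{i,j} \epsilon_i\eta_j\, a_ib_j$, and by the computation above each $a_ib_j$ lies in $\mathfrak{h}(A)$. Thus $xy \in \mathbb{G}_r$. Since $0 \in \Pi_{\infty}^{(r)}$ yields $1 = \mathfrak{h}(0) \in \mathfrak{h}(A) \subseteq \mathbb{G}_r$, and $\mathbb{G}_r$ is by definition an additive subgroup of $\mathbb{R}$, we conclude that $\mathbb{G}_r$ is a subring of $\mathbb{R}$ containing $1$.

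For the second assertion, the plan is to transfer Hausdorff dimension through $\mathfrak{h}$ and then invoke the Edgar--Miller theorem. Removing the single point $1$ from $\Pi_{\infty}^{(r)}$ does not change its Hausdorff dimension, so $\dim_H A > 0$. Because $\mathfrak{h}^{-1}$ is Lipschitz on $[1,+\infty)$ (shown in the proof of Corollary \ref{cor:lebesgue}) and $A = \mathfrak{h}^{-1}(\mathfrak{h}(A))$, we get $\dim_H \mathfrak{h}(A) \geq \dim_H A > 0$, and a fortiori $\dim_H \mathbb{G}_r > 0$. By Proposition \ref{prop:dense}, $\mathbb{G}_r$ is analytic; combined with the first part, $\mathbb{G}_r$ is an analytic subring of $\mathbb{R}$ of positive Hausdorff dimension. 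Theorem \ref{thm:volkmann} then forces $\mathbb{G}_r = \mathbb{R}$. No single step here is the main obstacle---the whole argument hinges on the one-line identity $1 - \alpha \otimes_2 \beta = (1-\alpha)(1-\beta)$, which is exactly the reason $\otimes_2$ (and not some other product) is the ``right'' candidate for lifting to a ring operation on Tur\'an densities.
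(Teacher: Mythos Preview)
Your proof is correct and follows essentially the same approach as the paper: show that $\mathfrak{h}$ carries $\otimes_2$ to real multiplication so that $\mathfrak{h}(A)$ is a multiplicative semigroup, deduce that the additive group $\mathbb{G}_r$ it generates is closed under multiplication, and then invoke analyticity together with the Edgar--Miller theorem for the Hausdorff-dimension conclusion. Your write-up is slightly more explicit (you spell out the identity $\mathfrak{h}(\alpha \otimes_2 \beta)=\mathfrak{h}(\alpha)\mathfrak{h}(\beta)$ and the Lipschitz transfer of dimension), whereas the paper compresses these steps, but the arguments are the same.
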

\begin{proof}
If $\Pi_{\infty}^{(r)}$ is closed under $\otimes_2$ then $\mathfrak{h}(A)$ is a semigroup under real multiplication. As any element of $\mathbb{G}_r$ is of the form $\alpha - \beta$, with $\alpha, \beta \in \mathfrak{h}(A)$, $\mathbb{G}_r$ must be closed under multiplication as well. As $1 \in \mathbb{G}_r$, $\mathbb{G}_r$ is a subring and an analytic set.

If $\Pi_{\infty}^{(r)}$ has positive Hausdorff dimension, $\mathbb{G}_r$ has too, and hence by Theorem \ref{thm:volkmann}, $\mathbb{G}_r = \mathbb{R}$.
\end{proof}

This might help in resolving the following two problems.

\begin{problem}
Is $\Pi_{\infty}^{(r)}$ closed under $\otimes_2$ for $r \geq 3$?
\end{problem}

\begin{problem}
Compute the Hausdorff dimension of $\Pi_{\infty}^{(r)}$ or at least determine if it is zero.
\end{problem}

\subsection{\normalsize{Revisiting the case $r=2$}}

It is a consequence of the Erd\H os-Stone-Simonovits theorem that
\begin{equation}
\label{eq:revisit}
\Pi_{\fin}^{(2)} = \Pi_{\infty}^{(2)} = \{1\} \cup \{1 - \frac{1}{k} : k \geq 1\}.
\end{equation}
Consider the following problem.
\begin{problem}
Find a proof of \eqref{eq:revisit} without relying on the Erd\H os-Stone-Simonovits theorem, and generalize it as much as possible to $r \geq 3$.
\end{problem}
Here is a short proof. By Tur\'an's theorem, $\{1 - \frac{1}{k} : k \geq 1\} \subset \Pi_{\fin}^{(2)}$. Moreover, $\pi(\emptyset) = 1 \in \Pi_{\fin}^{(2)}$. On the other hand, by the results of Brown and Simonovits, $\Pi_{\fin}^{(2)} \subseteq \Pi_{\infty}^{(2)} \subseteq \overline{\Lambda}^{(2)}$.
It is well-known that $\Lambda^{(2)} = \{1\} \cup \{1 - \frac{1}{k} : k \geq 1\}$, and so \eqref{eq:revisit} holds.

This argument is in some sense unsatisfactory, as it relies on the exact computation of $\Lambda^{(2)}$, a feat which we can not hope to reproduce for $r \geq 3$. Nevertheless, we have the following.

\begin{proposition}
\label{prop:ErdosStone}
Let $r \geq 2$ and suppose $\Pi_{\fin}^{(r)}$ is closed under $\oplus_r$, $\Pi_{\infty}^{(r)}$ is closed under $\otimes_2$, and the subgroup $\mathbb{G}_r$ of $(\mathbb{R}, +)$ generated by $\mathfrak{h}(\Pi_{\infty}^{(r)}\setminus\{1\})$ is not dense in $\mathbb{R}$. Then $\Pi_{\fin}^{(r)}=\Pi_{\infty}^{(r)} = \{1\} \cup \{1 - \frac{1}{k^{r-1}} : k \geq 1\}$.
\end{proposition}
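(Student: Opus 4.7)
The plan is to reformulate everything in terms of the homeomorphism $\mathfrak{h}(x) = (1-x)^{-1/(r-1)}$ from Corollary \ref{cor:lebesgue}, which is also a semigroup isomorphism between $([0,1), \oplus_r)$ and $([1,+\infty), +)$. Setting $A := \Pi_{\infty}^{(r)}\setminus\{1\}$, the hypotheses translate into: $\mathfrak{h}(A)$ is a sub-semiring of $(\mathbb{R}, +, \cdot)$ whose additive group closure $\mathbb{G}_r$ is a non-dense subgroup of $\mathbb{R}$ containing $1$. The target is then to conclude $\mathfrak{h}(A) = \mathbb{Z}_{\geq 1}$, which is exactly the claimed description of $\Pi_{\infty}^{(r)}$.

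First I would verify the easy inclusion. We have $1 = \pi(\emptyset) \in \Pi_{\fin}^{(r)}$ and $0 = \pi(\{e\}) \in \Pi_{\fin}^{(r)}$ for $e$ the one-edge $r$-graph. A direct computation from \eqref{eq:addition} gives
\[
\left(1 - \frac{1}{k^{r-1}}\right) \oplus_r 0 \;=\; 1 - \frac{1}{(k+1)^{r-1}},
\]
so closure of $\Pi_{\fin}^{(r)}$ under $\oplus_r$ and induction on $k$ yield $\{1\} \cup \{1 - k^{-(r-1)} : k \geq 1\} \subseteq \Pi_{\fin}^{(r)}$; in $\mathfrak{h}$-coordinates this reads $\mathbb{Z}_{\geq 1} \subseteq \mathfrak{h}(A)$.

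The crucial step is to exploit the closure of $\Pi_{\infty}^{(r)}$ under $\otimes_2$. For $\alpha, \beta \in A$ the identity $1-(\alpha \otimes_2 \beta) = (1-\alpha)(1-\beta)$ gives
\[
\mathfrak{h}(\alpha \otimes_2 \beta) \;=\; \mathfrak{h}(\alpha)\,\mathfrak{h}(\beta),
\]
so the hypothesis forces $\mathfrak{h}(A)$ to be closed under ordinary multiplication as well. Since $\mathbb{G}_r$ is a non-dense subgroup of $(\mathbb{R}, +)$ containing $1$, the standard classification of subgroups of $\mathbb{R}$ yields $\mathbb{G}_r = \frac{1}{n}\mathbb{Z}$ for some integer $n \geq 1$. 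Now pick any $c \in \mathfrak{h}(A)$ and write it in lowest terms as $c = p/q$ with $p, q \in \mathbb{Z}_{\geq 1}$, $\gcd(p,q) = 1$. Multiplicative closure gives $c^m = p^m/q^m \in \mathfrak{h}(A) \subseteq \frac{1}{n}\mathbb{Z}$ for every $m \geq 1$, which forces $q^m \mid n$ for all $m$; hence $q = 1$ and $c \in \mathbb{Z}$. Consequently $\mathfrak{h}(A) \subseteq \mathbb{Z}_{\geq 1}$, and combined with the previous step $\mathfrak{h}(A) = \mathbb{Z}_{\geq 1}$.

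Pulling back under $\mathfrak{h}$ this gives $\Pi_{\infty}^{(r)} = \{1\} \cup \{1 - k^{-(r-1)} : k \geq 1\}$, and since the same set lies in $\Pi_{\fin}^{(r)} \subseteq \Pi_{\infty}^{(r)}$ by the first step, all three sets coincide. The only non-trivial point is really the observation that $\mathfrak{h}$ simultaneously linearises $\oplus_r$ to addition and $\otimes_2$ to multiplication; after that recognition, the ``main obstacle'' reduces to the elementary fact that a fraction $p/q$ in lowest terms with $q>1$ has powers with unboundedly large denominators, which is what kills the case $n > 1$.
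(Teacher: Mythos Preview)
Your proof is correct and follows essentially the same route as the paper: translate via $\mathfrak{h}$, use non-density to get $\mathbb{G}_r=\tfrac{1}{n}\mathbb{Z}$, use the $\otimes_2$-closure to force $\mathfrak{h}(A)\subseteq\mathbb{Z}$, and use $\oplus_r$-closure of $\Pi_{\fin}^{(r)}$ starting from $0$ to get the reverse inclusion. The only cosmetic difference is in the key step: the paper first observes (via Proposition~\ref{prop:subring}) that the whole group $\mathbb{G}_r$ is a subring, takes its smallest positive element $a$, and argues $a^2\ge a$ together with $1\in\mathbb{G}_r$ forces $a=1$; you instead work only with multiplicative closure of $\mathfrak{h}(A)$ and use the unbounded-denominator argument for powers of a non-integer rational. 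Both are equally elementary and lead to the same conclusion $\mathbb{G}_r=\mathbb{Z}$. (One small wording issue: in your opening plan you call $\mathfrak{h}(A)$ a sub-semiring, but the additive closure hypothesis is on $\Pi_{\fin}^{(r)}$, not $\Pi_{\infty}^{(r)}$; your actual argument never uses additive closure of $\mathfrak{h}(A)$, so this is harmless.)
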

\begin{proof}
By continuity and the fact that $\Pi_{\infty}^{(r)} \subseteq \overline{\Pi}_{\fin}^{(r)}$, $\Pi_{\infty}^{(r)}$ is also closed under $\oplus_r$. By Proposition \ref{prop:subring}, if $\Pi_{\infty}^{(r)}$ is closed under $\otimes_2$ then $\mathbb{G}_r$ is a subring of $\mathbb{R}$. It is well-known and easy to prove that a subgroup of $\mathbb{R}$ which is not dense must be cyclic. Hence $\mathbb{G}_r \cap (0, +\infty)$ has a smallest element $a$. As $a^2 \in \mathbb{G}_r$, it follows that $a = 1$. Thus $\mathbb{G}_r = \mathbb{Z}$, hence $\Pi_{\infty}^{(r)} = \{1\} \cup \{1 - \frac{1}{k^{r-1}} : k \geq 1\}$.

Now $0 \in \Pi_{\fin}^{(r)}$, and therefore the subsemigroup generated by $0$ under $\oplus_r$ belongs to $\Pi_{\fin}^{(r)}$. This subsemigroup is exactly $\{1 - \frac{1}{k^{r-1}} : k \geq 1\}$, and consequently $\Pi_{\fin}^{(r)}=\Pi_{\infty}^{(r)}$, proving the claim.
\end{proof}

As $\mathbb{G}_r$ is dense in $\mathbb{R}$ for all $r \geq 3$ by Proposition \ref{prop:dense}, Proposition \ref{prop:ErdosStone} only applies to $r=2$.

We know that $\Pi_{\fin}^{(2)}$ is closed under $\oplus_2$ by Theorem \ref{thm:local}, and $\Pi_{\infty}^{(2)}$ is closed under $\otimes_2$. However, Proposition \ref{prop:ErdosStone} does not give a new proof of \eqref{eq:revisit}, as I can not show that $\mathbb{G}_2$ is not dense without relying on the Erd\H os-Stone-Simonovits theorem. Furthermore my proof that $(\Pi_{\infty}^{(2)}, \otimes_2)$ is a semigroup relies on the ability to compute $\lambda(G)$, where $G$ is any $2$-graph. It would be interesting to find a different proof of these two claims, and possibly find other sets of hypotheses which imply \eqref{eq:revisit}.

\section*{\normalsize Acknowledgements}

I would like to thank my advisor Tibor Szab\'o for his support during the completion of this project and J\'anos K\"orner for sending me a copy of \cite{Korner88}.

\bibliographystyle{abbrv}
\bibliography{../../../MainBibFile/references}
\end{document}